\newtheorem{thm}{Theorem}
\newtheorem{lem}[thm]{Lemma}
\theoremstyle{remark}
\newtheorem{rem}{Remark}
\newcommand{\comment}[1]{}
\DeclareMathOperator{\I}{I}
\DeclareMathOperator{\J}{J}
\DeclareMathOperator{\rr}{r}
\DeclareMathOperator{\h}{h}
\DeclareMathOperator{\dd}{d}
\newcommand{\ics}{infima closed set}
\newcommand{\icss}{infima closed sets}
\def\Ddots{\mathinner{\mkern1mu\raise\p@
\vbox{\kern7\p@\hbox{.}}\mkern2mu
\raise4\p@\hbox{.}\mkern2mu\raise7\p@\hbox{.}\mkern1mu}}
\begin{document}

\thispagestyle{plain}

\title{Trees with minimum number of infima closed sets}

\author{Eric Ould Dadah Andriantiana}
\address{Eric Ould Dadah Andriantiana\\
Department of Mathematics (Pure and Applied)\\
Rhodes University, PO Box 94\\
6140 Grahamstown\\
South Africa}
\email{E.Andriantiana@ru.ac.za}

\author{Stephan Wagner}
\address{Stephan Wagner\\
Department of Mathematics \\
Uppsala Universitet \\
Box 480 \\
751 06 Uppsala \\
Sweden
\and
Department of Mathematical Sciences\\
Stellenbosch University\\
Private Bag X1\\
Matieland 7602\\
South Africa
}
\email{stephan.wagner@math.uu.se,swagner@sun.ac.za}
\thanks{This work was supported by grants from the National Research Foundation of South Africa (grants 96236 and 96310). The second author was supported by the Knut and Alice Wallenberg Foundation.}
\date{\today}
\subjclass[2010]{Primary 05C05; secondary 05C35,05C69}
\keywords{Rooted trees, infima closed sets, minimum number, asymptotic estimate}

\begin{abstract}
Let $T$ be a rooted tree, and $V(T)$ its set of vertices. A subset $X$ of $V(T)$ is called an \ics{} of $T$ if for any two vertices $u,v\in X$, the first common ancestor of $u$ and $v$ is also in $X$. This paper determines the trees with minimum number of \icss{} among all rooted trees of given order, thereby answering a question of Klazar. It is shown that these trees are essentially complete binary trees, with the exception of vertices at the last levels. Moreover, an asymptotic estimate for the minimum number of \icss{} in a tree with $n$ vertices is also provided.
\end{abstract}

\maketitle

\section{Introduction}
Let $T$ be a rooted tree, and let the root be denoted by $\rr(T)$. 
The vertices of $T$ can be regarded as the elements of a poset, with $\rr(T)$ as its least element. Vertices $v$ and $w$ satisfy the relation $v \preceq w$ if and only if $v$ lies on the path from the root $\rr(T)$ to $w$. The tree $T$ can then be regarded as the corresponding Hasse diagram. The infimum of two vertices $u$ and $v$ in the sense of this poset is the common vertex of the paths from $u$ to the root and from $v$ to the root that is furthest from the root. A subset $X$ of the set of vertices of $T$ is called an \emph{\ics{}} of $T$, if for any two elements $u$ and $v$ of $X$ the infimum of $u$ and $v$ is an element of $X$ as well. Figure~\ref{Fig:Example_ICS} shows an example of an \ics{} in a tree; note that we draw all trees with their root (least element) on top, in contrast to the usual convention for Hasse diagrams.

\begin{figure}[htbp]
 \centering
    \begin{tikzpicture}[scale=1]
	\draw (0,0)--(-2,-1)--(-3,-2)--(-3,-3);
	\draw (-2,-1)--(-1,-2)--(-1,-3);
	\draw (0,0)--(0,-1);
	\draw (0,0)--(2,-1)--(1,-2);
	\draw (2,-1)--(2,-2);
	\draw (2,-1)--(3,-2)--(2.5,-3);
	\draw (3,-2)--(3.5,-3);
	\node[fill=black,circle,inner sep=3pt] () at (0,0) {};
	\node[fill=black,circle,inner sep=3pt] () at (-2,-1) {};
	\node[fill=black,circle,inner sep=1pt] () at (0,-1) {};
	\node[fill=black,circle,inner sep=3pt] () at (2,-1) {}; 
	\node[fill=black,circle,inner sep=1pt] () at (-3,-2) {};
	\node[fill=black,circle,inner sep=3pt] () at (-1,-2) {};
	\node[fill=black,circle,inner sep=3pt] () at (1,-2) {};
	\node[fill=black,circle,inner sep=1pt] () at (2,-2) {};
	\node[fill=black,circle,inner sep=3pt] () at (3,-2) {};
	\node[fill=black,circle,inner sep=3pt] () at (-3,-3) {};
	\node[fill=black,circle,inner sep=1pt] () at (-1,-3) {};
	\node[fill=black,circle,inner sep=1pt] () at (2.5,-3) {};
	\node[fill=black,circle,inner sep=1pt] () at (3.5,-3) {};

     \end{tikzpicture} 
 \caption{A rooted tree and one of its \icss{}, indicated by larger circles.}
 \label{Fig:Example_ICS}
\end{figure}
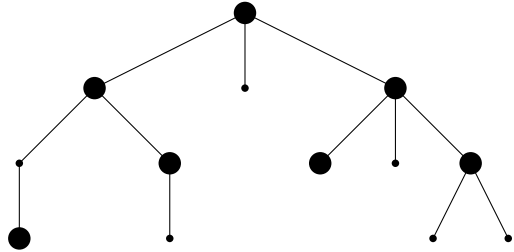

Let $\I(T)$ denote the number of  nonempty \icss{} in $T$. 
In \cite{klazar_1997_twelve} (see also \cite{klazar_1997_addendum}), Klazar studied the enumeration of various types of sets in plane trees, among them the number of nonempty infima closed sets. Specifically, he obtained an asymptotic formula for the sum of nonempty \icss{} over all plane trees with $n$ vertices.

It is also stated in Klazar's paper that the rooted tree of order $n$ with the greatest number of \icss{} is the path $P_n$, rooted at one of its ends. Indeed, it is easy to see that every 
 subset of $V(P_n)$ is infima closed, hence
$$
\max_{|T| = n} \I(T)=\I(P_n)=2^n-1.
$$
 It is interesting to note that the star $S_n$, rooted at its centre, which is in some sense the tree that differs most from a path, also has a fairly large number of \icss{}: indeed,
$$\I(S_n) = 2^{n-1} + n-1.$$
The problem of finding the trees with a given number of vertices for which $\I(T)$ attains its minimum was stated as an open question in \cite{klazar_1997_twelve}. Problems of this flavour, where the maximum or minimum number of a certain type of sets in trees is to be determined, have received considerable attention in the literature. See for instance \cite[Section 3]{szekely_2016_problems} for a recent survey. Some examples include independent sets and matchings (ordinary, maximal or maximum) \cite{wagner_2010_maxima,heuberger_2011_number,gorska_2007_trees,wilf_1986_number,zito_1991_structure}, subtrees \cite{szekely_2005_subtrees}, and different types of dominating sets \cite{krzywkowski_2018_graphs,brod_2008_recurrence,brod_2006_trees,krzywkowski_2013_trees,rote_2019_minimal,rote_2019_maximum}. In a very recent paper, Rosenfeld \cite{rosenfeld_2020_growth}, building on work of Rote \cite{rote_2019_maximum,rote_2019_minimal}, describes a very general approach to compute growth rates for problems of this kind.

The aim of this paper is to resolve the open question of Klazar. The structure of the trees that yield the minimum is quite interesting. They are similar to complete binary trees, except for the very last levels. This property follows mainly from Lemmas \ref{Lem:g8} and \ref{Lem:Form2}. A precise description will be our main result (Theorem~\ref{thm:main}). Figure~\ref{Fig:Example_Minimum} shows the tree with the smallest number of \icss{} among rooted trees with $15$ vertices as an example.

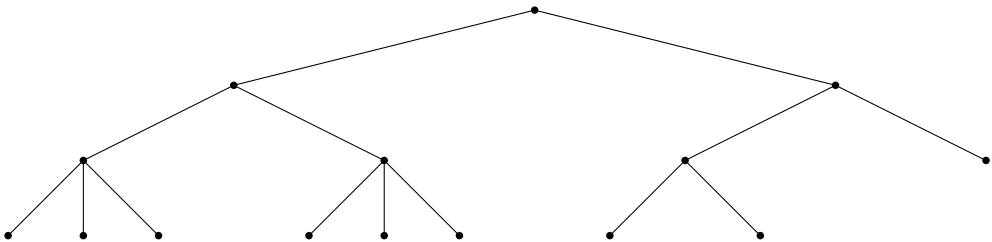
\begin{figure}[htbp]
 \centering
    \begin{tikzpicture}[scale=1]
	\draw (0,0)--(-4,-1);
	\draw (0,0)--(4,-1);
	\draw (-4,-1)--(-6,-2);
	\draw (-4,-1)--(-2,-2);
	\draw (4,-1)--(6,-2);
	\draw (4,-1)--(2,-2);
	\draw (-6,-2)--(-7,-3);
	\draw (-6,-2)--(-6,-3);
	\draw (-6,-2)--(-5,-3);
	\draw (-2,-2)--(-3,-3);
	\draw (-2,-2)--(-2,-3);
	\draw (-2,-2)--(-1,-3);
	\draw (2,-2)--(3,-3);
	\draw (2,-2)--(1,-3);

	\node[fill=black,circle,inner sep=1pt] () at (0,0) {};
	\node[fill=black,circle,inner sep=1pt] () at (-4,-1) {};
	\node[fill=black,circle,inner sep=1pt] () at (4,-1) {};
	\node[fill=black,circle,inner sep=1pt] () at (-6,-2) {}; 
	\node[fill=black,circle,inner sep=1pt] () at (-2,-2) {};
	\node[fill=black,circle,inner sep=1pt] () at (2,-2) {};
	\node[fill=black,circle,inner sep=1pt] () at (6,-2) {};
	\node[fill=black,circle,inner sep=1pt] () at (-7,-3) {};
	\node[fill=black,circle,inner sep=1pt] () at (-6,-3) {};
	\node[fill=black,circle,inner sep=1pt] () at (-5,-3) {};
	\node[fill=black,circle,inner sep=1pt] () at (-3,-3) {};
	\node[fill=black,circle,inner sep=1pt] () at (-2,-3) {};
	\node[fill=black,circle,inner sep=1pt] () at (-1,-3) {};
	\node[fill=black,circle,inner sep=1pt] () at (1,-3) {};
	\node[fill=black,circle,inner sep=1pt] () at (3,-3) {};
     \end{tikzpicture} 
 \caption{The rooted tree of order $15$ with the smallest number of \icss{}.}
 \label{Fig:Example_Minimum}
\end{figure}

Interestingly, there is not always a unique tree for which the minimum is attained: there are two such trees if the number of vertices is either $6$ or of the form $5m+2$ for a positive integer $m$. We will also provide an asymptotic analysis of the minimum value: specifically, we will show that there exists a constant $\alpha$ such that
$$
m_n = \min_{|T| = n} \I(T)=\Theta(\alpha^n).
$$
The numerical value of this constant is $\alpha \approx 1.66928\,37234\,96921\,49740\,26178$.

Preliminary results on the structure of the trees that attain the minimum are gathered in the following sections, leading up to the complete characterisation that is provided in Section~\ref{Sec:Main}. We conclude with the asymptotic analysis in our final section.

\section{Preliminaries}
\label{Sec:Pre}

\subsection{Notation}

Throughout this paper, we write $\I(T)$ for the number of nonempty \icss{} of a rooted tree $T$. It will also be necessary to consider several related quantities: specifically, $\I_0(T)$ denotes the number of nonempty \icss{} that do not contain the root, while $\I_1(T)$ is the number of \icss{} that contain the root. So we trivially have $\I(T) = \I_0(T) + \I_1(T)$. More generally, given arbitrary vertices $v,w$ of $T$, we write $\I_v(T)$ for the number of \icss{} that contain $v$ and $\I_{vw}(T)$ for the number of \icss{} that contain both $v$ and $w$.
The quantity of interest to us is the minimum
$$m_n = \min_{|T| = n} \I(T),$$
and trees that attain this minimum will also be called \emph{minimal trees}.

By a \emph{branch} of a rooted tree $T$, we mean any subtree of $T$ consisting of a vertex $v$ (the root of the branch) and all its descendants; in particular, the \emph{root branches} are the branches associated with the root's children. If $B_1,B_2,\dots,B_k$ are the root branches of $T$, then we write $T=[B_1,B_2,\dots,B_k]$. See Figure~\ref{Fig:Decomp} for an iterated use of this notation, which will prove useful in describing the structure of minimal trees.

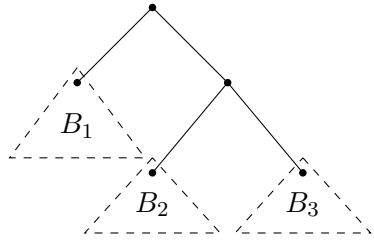
\begin{figure}[htbp]
 \centering
    \begin{tikzpicture}[scale=1]
        \draw (0,0)--(-1,1)--(-2,0);
	\draw (-1,-1.2)--(0,0)--(1,-1.2);
	\draw[dashed] (-2,0.2)--(-2.9,-1)--(-1.1,-1)--(-2,0.2);
	\draw[dashed] (-1,-1)--(-1.9,-2)--(-0.1,-2)--(-1,-1);
	\draw[dashed] (1,-1)--(1.9,-2)--(0.1,-2)--(1,-1);
	\node[fill=black,circle,inner sep=1pt] () at (-2,0) {};
	\node[fill=black,circle,inner sep=1pt] () at (-1,1) {};
        \node[fill=black,circle,inner sep=1pt] () at (0,0) {}; 
        \node[fill=black,circle,inner sep=1pt] () at (-1,-1.2) {};
        \node[fill=black,circle,inner sep=1pt] () at (1,-1.2) {};
        \node[fill=white,label=below:{$B_1$},circle,inner sep=0pt] () at (-2,-0.25) {};
        \node[fill=white,label=below:{$B_2$},circle,inner sep=0pt] () at (-1,-1.3) {};
        \node[fill=white,label=below:{$B_3$},circle,inner sep=0pt] () at (1,-1.3) {};
     \end{tikzpicture} 
 \caption{Decomposition of trees into branches: the figure shows the tree $[B_1,[B_2,B_3]]$.}
 \label{Fig:Decomp}
\end{figure}

Another useful decomposition involves two disjoint branches of a tree: given a rooted tree $S$ with two leaves $v$ and $w$ and two further rooted trees $A$ and $B$, we write $[A:_v\hspace{-0.14cm}S_w\hspace{-0.14cm}:B]$ for the tree obtained by merging the root of $A$ with $v$ and the root of $B$ with $w$, see Figure~\ref{Fig:Decomp_3}.

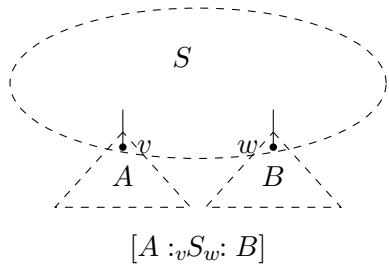
\begin{figure}[htbp]
 \centering
    \begin{tikzpicture}[scale=1]
        \draw[dashed] (-1+6,-1)--(-1.9+6,-2)--(-0.1+6,-2)--(-1+6,-1);
	\draw[dashed] (1+6,-1)--(1.9+6,-2)--(0.1+6,-2)--(1+6,-1);
	\draw[dashed] (6,-0.35) ellipse (2.5cm and 1cm);
	\draw (-1+6,-1.2)--(-1+6,-0.7);
	\draw (1+6,-1.2)--(1+6,-0.7);
	\node[fill=white,label=right:{$S$},circle,inner sep=0pt] () at (5.5,0) {};
        \node[fill=black,label=right:{$v$},circle,inner sep=1pt] () at (-1+6,-1.2) {};
        \node[fill=black,label=left:{$w$},circle,inner sep=1pt] () at (1+6,-1.2) {};
        \node[fill=white,label=below:{$A$},circle,inner sep=0pt] () at (-1+6,-1.3) {};
        \node[fill=white,label=below:{$B$},circle,inner sep=0pt] () at (1+6,-1.3) {};
        \node[fill=white,label=below:{$[A:_v\hspace{-0.14cm}S_w\hspace{-0.14cm}:B]$},circle,inner sep=0pt] () at (0+6,-2.2) {};
     \end{tikzpicture} 
 \caption{Illustration of the tree $[A:_v\hspace{-0.14cm}S_w\hspace{-0.14cm}:B]$.}
 \label{Fig:Decomp_3}
\end{figure}

Finally, we use the common notations $\deg(v)$ and $\dd(v,w)$ for the degree of a vertex $v$ and the distance between two vertices $v,w$ respectively.

\subsection{A recursion}

In order to determine the number of \icss{} of a rooted tree recursively from its root branches, we distinguish those \icss{} that do not contain the root from those that do. The former have to lie entirely in one of the branches, which immediately gives us
$$\I_0([B_1,B_2,\dots,B_k])=\sum_{j=1}^k\I(B_j).$$
On the other hand, every \ics{} of $T$ that contains the root induces a possibly empty \ics{} on each of the branches. Conversely, if we select a vertex subset in each of the branches that is infima closed, take the union of all those subsets and add the root, we obtain an \ics{} in $T$. 
This means that
$$
\I_1([B_1,B_2,\dots,B_k])=\prod_{j=1}^k\big(1+\I(B_j)\big).
$$
Consequently, we have the recursion
\begin{equation}\label{eq:recursion}
\I([B_1,B_2,\dots,B_k])=\sum_{j=1}^k\I(B_j)+\prod_{j=1}^k\big(1+\I(B_j)\big).
\end{equation}
This recursion will be used frequently throughout the paper. In particular, for $k=2$ we have
$$
\I([B_1,B_2])=\I(B_1)\I(B_2)+2\I(B_1)+2\I(B_2)+1=(\I(B_1)+2)(\I(B_2)+2) - 3.
$$
This also motivates the substitution $\J(T) = \I(T) + 2$, which will be useful later. In terms of the new invariant $\J$, the recursion simply becomes
\begin{equation}\label{eq:Jrec}
\J([B_1,B_2])=\J(B_1)\J(B_2)-1.
\end{equation}
Clearly, minimising $\I(T)$ is equivalent to minimising $\J(T)$, so we will sometimes work with $\J$ when it is more convenient.

The expression on the right side of~\eqref{eq:recursion} is clearly increasing in each of the $\I(B_j)$. This immediately yields the following simple yet useful lemma:

\begin{lem}\label{lem:branches}
Every branch of a minimal tree has to be a minimal tree as well.
\end{lem}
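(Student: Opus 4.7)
The plan is to argue by contradiction, using the strict monotonicity of the recursion~\eqref{eq:recursion}. Suppose $T$ is a minimal tree of order $n$, and let $B$ be a branch of $T$, rooted at some vertex $v$. If $B$ were not minimal, there would exist a rooted tree $B'$ with $|B'|=|B|$ and $\I(B')<\I(B)$. Form $T'$ by deleting the subtree of $T$ rooted at $v$ and attaching $B'$ in its place (identifying $v$ with $\rr(B')$). Then $|T'|=n$, and I will show $\I(T')<\I(T)$, which contradicts the minimality of $T$.

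The key observation is that the right-hand side of \eqref{eq:recursion} is \emph{strictly} increasing in each of the $\I(B_j)$. Indeed, since every nonempty rooted tree has at least the singleton $\{\rr\}$ as an \ics{}, we have $\I(B_j)\geq 1$, so $1+\I(B_j)\geq 2>0$, and hence both the sum and the product on the right of~\eqref{eq:recursion} strictly increase when any $\I(B_j)$ strictly increases.

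I would then prove $\I(T')<\I(T)$ by induction on $\dd(v,\rr(T))$. For the base case $\dd(v,\rr(T))=0$ we have $T=B$ and $T'=B'$, so the inequality is immediate. For the inductive step, let $B_1,\ldots,B_k$ be the root branches of $T$, and assume without loss of generality that $v$ lies in $B_1$; let $B_1'$ be the corresponding root branch of $T'$, obtained from $B_1$ by the same replacement operation applied at $v$. Since $\dd(v,\rr(B_1))=\dd(v,\rr(T))-1$, the inductive hypothesis gives $\I(B_1')<\I(B_1)$. Applying~\eqref{eq:recursion} at the root of $T$ and invoking the strict monotonicity above then yields $\I(T')<\I(T)$.

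There is no real obstacle here; the lemma is essentially a direct corollary of the strict monotonicity of the recursion. The only point to be careful about is verifying that $1+\I(B_j)>0$ so that strict monotonicity of the product genuinely holds, which is why the bound $\I(B_j)\geq 1$ matters.
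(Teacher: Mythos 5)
Your argument is correct and is essentially the paper's own proof spelled out in detail: the paper likewise handles root branches via the monotonicity of~\eqref{eq:recursion} and extends to deeper branches by induction. No issues.
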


\begin{proof}
For root branches, this follows directly from~\eqref{eq:recursion}; for all other branches, we obtain it by induction.
\end{proof}

Lemma~\ref{lem:branches} allows us to determine minimal trees of small order efficiently by means of a computer program\footnote{Mathematica files are available on \url{https://arxiv.org/src/2008.10225v2/anc}.}. As it turns out, there is a unique minimal tree in most cases (which will be proven later). The only non-uniqueness appears for orders $6$ and $7$ and subsequently all minimal trees that contain a branch of order $7$ (branches of order $6$ cannot occur in a minimal tree of order greater than $6$, see Lemma~\ref{Lem:Forb_Br}). Let us write $M_n$ for the unique minimal tree of order $n$ if there is only one, and $M_n^1$ and $M_n^2$ if there are two minimal trees. This notation will be justified later by Theorem~\ref{thm:main}. For simplicity, we will use $\bullet$ to denote the tree of order $1$. Of course, $M_1 = \bullet$. Table~\ref{Tab:Min_Small} shows minimal trees of small order, expressed in terms of their root branches. The phenomenon that the root has degree $2$ in almost all cases will be proven later, see Lemma~\ref{Lem:g8}. It can also be observed from the table that minimal trees of order $4,9,19,39,\ldots$ play an important role. This will also be made precise at a later stage.

\begin{table}[htbp]
\centering
\begin{tabular}{|l|l||l|l||l|l||l|l|}
\hline
  $n$ & minimal tree(s)
& $n$ & minimal tree(s)
& $n$ & minimal tree(s) 
& $n$ & minimal tree(s)\\
\hline
\hline
$1$ & $\bullet$ 
& $2$ & $[\bullet]$ 
& $3$ & $[\bullet,\bullet]$
& $4$ & $[\bullet,\bullet,\bullet]$\\
$5$ & $[\bullet,\bullet,\bullet,\bullet]$
& $6$ & $[M_4,\bullet]$
&$7$ &$[M_4,\bullet,\bullet]$ 
& $8$ & $[M_4,M_3]$\\
&&& and $[M_3,\bullet,\bullet]$ && and $[M_3,M_3]$ &&\\
$9$ & $[M_4,M_4]$
&  $10$ & $[M_5,M_4]$
& $11$ & $[M_5,M_5]$
& $12$ & $[M_7^i,M_4]$\\
$13$ & $[M_8,M_4]$
& $14$ & $[M_9,M_4]$
& $15$ & $[M_9,M_5]$
& $16$ & $[M_{10},M_5]$\\
$17$ & $[M_{9},M_7^i]$
& $18$ & $[M_{9},M_8]$
& $19$ & $[M_{9},M_9]$
& $20$ & $[M_{10},M_9]$\\
$21$ & $[M_{11},M_9]$
& $22$ & $[M_{12}^i,M_9]$
& $23$ & $[M_{13},M_9]$
& $24$ & $[M_{14},M_9]$\\
$25$ & $[M_{15},M_9]$
& $26$ & $[M_{15},M_{10}]$
& $27$ & $[M_{17}^i,M_9]$
& $28$ & $[M_{18},M_9]$\\
$29$ & $[M_{19},M_9]$
& $30$ & $[M_{19},M_{10}]$
& $31$ & $[M_{19},M_{11}]$
& $32$ & $[M_{19},M_{12}^i]$\\
$33$ & $[M_{19},M_{13}]$
& $34$ & $[M_{19},M_{14}]$
& $35$ & $[M_{19},M_{15}]$
& $36$ & $[M_{20},M_{15}]$\\
$37$ & $[M_{19},M_{17}^i]$
& $38$ & $[M_{19},M_{18}]$
& $39$ & $[M_{19},M_{19}]$
& $40$ & $[M_{20},M_{19}]$\\
$41$ & $[M_{21},M_{19}]$
& $42$ & $[M_{22}^i,M_{19}]$
& $43$ & $[M_{23},M_{19}]$
& $44$ & $[M_{24},M_{19}]$\\
$45$ & $[M_{25},M_{19}]$
& $46$ & $[M_{26},M_{19}]$
& $47$ & $[M_{27}^i,M_{19}]$ 
& $48$ & $[M_{28},M_{19}]$\\ 
$49$ & $[M_{29},M_{19}]$
& $50$ & $[M_{30},M_{19}]$
& $51$ & $[M_{31},M_{19}]$ 
& $52$ & $[M_{32}^i,M_{19}]$\\
$53$ & $[M_{33},M_{19}]$
& $54$ & $[M_{34},M_{19}]$
& $55$ & $[M_{35},M_{19}]$
& $56$ & $[M_{35},M_{20}]$\\
$57$ & $[M_{37}^i,M_{19}]$
& $58$ & $[M_{38},M_{19}]$
& $59$ & $[M_{39},M_{19}]$
& $60$ & $[M_{39},M_{20}]$\\
\hline
\end{tabular}

\medskip

\caption{Minimal trees of small order.}
\label{Tab:Min_Small}
\end{table}

\begin{table}[htbp]
\begin{tabular}{|l||c|c|c|c|c|c|c|c|c|c|}
\hline
$n$ & 1 & 2 & 3 & 4 & 5 & 6 & 7 & 8 & 9 & 10 \\
\hline
$m_n$ & 1 & 3 & 6 & 11 & 20 & 36 & 61 & 101 & 166 & 283 \\
\hline
\hline
$n$ & 11 & 12 & 13 & 14 & 15 & 16 & 17 & 18 & 19 & 20 \\
\hline
$m_n$ & 481 & 816 & 1336 & 2181 & 3693 & 6267 & 10581 & 17301 & 28221 & 47877 \\
\hline
\end{tabular}

\medskip

\caption{Minimum values of $\I(T)$ for $|T| \leq 20$.}
\label{Tab:Min_Values}
\end{table}

\newpage

The following simple observation will often be useful:

\begin{lem}\label{lem:increasing_m}
The sequence $m_n = \min_{|T| = n} \I(T)$ is increasing in $n$. Thus, if $T_1$ and $T_2$ are both minimal trees for their respective orders, then $\I(T_1) < \I(T_2)$ if and only if $|T_1| < |T_2|$.
\end{lem}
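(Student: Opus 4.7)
The plan is to show strict monotonicity $m_n < m_{n+1}$ directly by a leaf-deletion argument, from which the ``if and only if'' statement follows immediately.

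Fix $n \ge 1$ and let $T$ be a minimal tree of order $n+1$, so that $\I(T) = m_{n+1}$. Since $|T| \ge 2$, the tree $T$ has at least one non-root leaf $v$; let $T' = T - v$ be the rooted tree of order $n$ obtained by deleting $v$. The key observation is that deleting a leaf does not affect infima between any two remaining vertices: for $u, w \in V(T') = V(T) \setminus \{v\}$, the path from $u$ to $\rr(T)$ and the path from $w$ to $\rr(T)$ lie entirely in $T'$, so $\inf_{T'}(u,w) = \inf_T(u,w)$. Consequently, a subset $X \subseteq V(T')$ is infima closed in $T'$ if and only if it is infima closed in $T$. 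Hence the inclusion $V(T') \hookrightarrow V(T)$ induces an injection from nonempty \icss{} of $T'$ into nonempty \icss{} of $T$.

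This injection misses at least one \ics{} of $T$, namely the singleton $\{v\}$, which is trivially infima closed in $T$ but is not a subset of $V(T')$. Therefore
\[
m_n \le \I(T') \le \I(T) - 1 = m_{n+1} - 1 < m_{n+1},
\]
which establishes that the sequence $(m_n)_{n \ge 1}$ is strictly increasing.

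For the second assertion, suppose $T_1$ and $T_2$ are minimal trees, with $\I(T_i) = m_{|T_i|}$. Since $n \mapsto m_n$ is strictly increasing, $m_{|T_1|} < m_{|T_2|}$ is equivalent to $|T_1| < |T_2|$, which gives exactly the claimed equivalence $\I(T_1) < \I(T_2) \Longleftrightarrow |T_1| < |T_2|$. There is no real obstacle here; the only thing to be careful about is choosing a non-root leaf (which exists in any rooted tree of order at least $2$) so that the resulting $T'$ is a well-defined rooted tree of order $n$.
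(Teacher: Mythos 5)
Your proof is correct and follows essentially the same route as the paper: delete a leaf from a minimal tree of the larger order and observe that the leaf-deleted tree has strictly fewer \icss{} (the paper states this strict decrease without elaboration, while you justify it via the preserved infima and the extra singleton $\{v\}$). No gaps.
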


\begin{proof}
Let $T$ be any minimal tree of order $n$. Removing a leaf from $T$, we obtain a tree $T'$ of order $n-1$ with fewer \icss{} than $T$. Thus
$$m_{n-1} \leq \I(T') < \I(T) = m_n.$$
\end{proof}

\subsection{General branches}

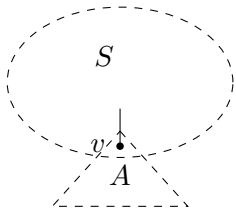
\begin{figure}[htbp]
 \centering

\begin{tikzpicture}[scale=1]
	\draw[dashed] (1+6,-1)--(1.9+6,-2)--(0.1+6,-2)--(1+6,-1);
	\draw[dashed] (7,-0.35) ellipse (1.5cm and 1cm);
	\draw (1+6,-1.2)--(1+6,-0.7);
	\node[fill=white,label=right:{$S$},circle,inner sep=0pt] () at (6.5,0) {};
        \node[fill=black,label=left:{$v$},circle,inner sep=1pt] () at (1+6,-1.2) {};
        \node[fill=white,label=below:{$A$},circle,inner sep=0pt] () at (1+6,-1.3) {};
     \end{tikzpicture}
 \caption{Lemma~\ref{Lem:S_A}, part (i).}
 \label{Fig:Decomp_2}
\end{figure}

Our next lemma provides relations involving arbitrary branches.
\begin{lem}
\label{Lem:S_A}
\
\begin{itemize}
 \item[(i)] Suppose that a tree $T$ can be decomposed as in Figure~\ref{Fig:Decomp_2},
where $v$ is a leaf in $S$ and $A$ is the branch rooted at $v$. Then we have $\I(T)=\I_v(S)\I(A)+\I(S-v)$. 
\item[(ii)] Consider the tree $T = [A:_v\hspace{-0.14cm}S_w\hspace{-0.14cm}:B]$ as in Figure~\ref{Fig:Decomp_3}, where $A,B$ and $S$ are rooted trees, and 
$v$ and $w$ are leaves of $S$. The number of nonempty \icss{} of this composite tree can be expressed as
$$
\I(T) =\I_{vw}(S)\I(A)\I(B)+(\I_{v}(S)-\I_{vw}(S))\I(A)+(\I_{w}(S)-\I_{vw}(S))\I(B)+\I(S-\{v,w\}).
$$
\end{itemize}
\end{lem}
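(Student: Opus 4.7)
The plan is to classify nonempty \icss{} $X$ of the composite tree according to their intersection with the identified vertices ($v$ in (i), and $v,w$ in (ii)) and with the attached subtrees. The key geometric observation, which I will use repeatedly, is that if $u$ is a leaf of $S$ and $C$ is the branch of the composite tree rooted at $u$, then for any $c\in V(C)\setminus\{u\}$ and $s\in V(S)\setminus\{u\}$ the infimum of $c$ and $s$ in the composite tree equals the infimum of $u$ and $s$ in $S$; that is, cross-infima between $C$ and $S$ collapse to infima in $S$ involving $u$, and in particular lie in $V(S)$ as proper ancestors of $u$.

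For part (i), I will split the nonempty \icss{} $X$ of $T$ into three disjoint classes, writing $X_S = X\cap V(S)$ and $X_A = X\cap V(A)$. Class (a): $v\in X$. Then $X\mapsto (X_S, X_A)$ is a bijection onto pairs of \icss{} of $S$ containing $v$ and of $A$ containing the root, giving $\I_v(S)\,\I_1(A)$; the cross-infimum condition is automatic because the infimum of any $s\in X_S$ and $a\in X_A$ equals the infimum of $s$ and $v$ in $S$, which lies in $X_S$. Class (b): $v\notin X$ and $X\cap V(A)=\emptyset$. Here $X$ is a nonempty \ics{} of $S-v$, contributing $\I(S-v)$. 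Class (c): $v\notin X$ and $X\cap V(A)\neq \emptyset$. The map $X\mapsto (X_S\cup\{v\}, X_A)$ is a bijection onto pairs (\ics{} of $S$ containing $v$, nonempty \ics{} of $A$ not containing the root); the only nontrivial check is that $X_S\cup\{v\}$ is infima closed in $S$, which follows by choosing any $a\in X_A$ and noting that for every $s\in X_S$ the infimum of $s$ and $v$ in $S$ equals the infimum of $s$ and $a$ in $T$, which lies in $X_S$. This contributes $\I_v(S)\,\I_0(A)$. Summing the three classes gives $\I_v(S)(\I_1(A)+\I_0(A))+\I(S-v)=\I_v(S)\,\I(A)+\I(S-v)$.

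For part (ii), the cleanest route is to apply (i) twice. I will view $T$ as the tree obtained by attaching $B$ at the leaf $w$ of the intermediate tree $T'$, where $T'$ is $S$ with $A$ attached at the leaf $v$. Part (i) then yields $\I(T)=\I_w(T')\,\I(B)+\I(T'-w)$. A second application of (i) to $T'-w$ (which is $S-w$ with $A$ attached at $v$, and $v$ is still a leaf of $S-w$) gives $\I(T'-w) = \I_v(S-w)\,\I(A)+\I(S-\{v,w\})$, where $\I_v(S-w)=\I_v(S)-\I_{vw}(S)$ because \icss{} of $S-w$ are exactly the \icss{} of $S$ not containing $w$. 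For $\I_w(T')$ I will rerun the three-class analysis of part (i) restricted to \icss{} containing $w$: the three subcounts become $\I_{vw}(S)\,\I_1(A)$, $\I_w(S)-\I_{vw}(S)$, and $\I_{vw}(S)\,\I_0(A)$, summing to $\I_{vw}(S)\,\I(A)+\I_w(S)-\I_{vw}(S)$. Substituting these into $\I(T)=\I_w(T')\,\I(B)+\I(T'-w)$ and collecting the coefficients of $\I(A)\I(B)$, $\I(A)$, $\I(B)$, and the constant term produces exactly the claimed formula. The only real technical step throughout is the infima-closure verification in class (c); everything else is combinatorial bookkeeping on top of (i).
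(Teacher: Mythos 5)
Your proof is correct and follows essentially the same route as the paper: part (i) is the paper's partition into ``meets $A$'' versus ``misses $A$'' (you merely split the first class according to whether $v\in X$, which recombines as $\I_1(A)+\I_0(A)=\I(A)$), and part (ii) is the paper's prescription of applying the argument of (i) twice, carried out sequentially rather than as a simultaneous four-way partition but yielding the identical four terms. The infima-closure check in your class (c) and the identities $\I_v(S-w)=\I_v(S)-\I_{vw}(S)$, $\I_w(S-v)=\I_w(S)-\I_{vw}(S)$ (valid because $v,w$ are leaves of $S$) are all sound.
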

\begin{proof}\ 
\begin{itemize}
\item[(i)] We can partition the set of nonempty \icss{} of $T$ into those that contain a vertex of $A$ and those that do not. The latter is clearly counted by $\I(S-v)$. If an \ics{} of $T$ contains vertices of $A$, then those have to form an \ics{} in $A$ as well. Moreover, as the infimum of any vertex $w$ of $S-v$ and any vertex of $A$ is the same as the infimum of $w$ and $v$, the set of vertices outside of $A$ have to form an \ics{} of $S$ together with $v$. The converse is also true: take any \ics{} of $S$ that contains $v$ and replace $v$ by an arbitrary nonempty \ics{} of $A$ to obtain an \ics{} of $T$. Therefore, the number of \icss{} of $T$ that contain one or more vertices of $A$ is $\I_v(S)\I(A)$, which completes the proof of the formula.
\item[(ii)] This statement follows in a straightforward fashion by applying the argument of (i) twice and dividing the set of nonempty \icss{} of $T$ into four subsets depending on whether or not they contain vertices of $A$ and $B$ respectively.
\end{itemize}
\end{proof}

\subsection{Some useful inequalities}

The following technical lemma, which provides three different inequalities, will be required later to estimate the effect of certain transformations.

\begin{lem}
\label{Lem:I1_I0}
Let $T$ be a rooted tree.
\begin{itemize}
\item[(i)]We have
$$\I_1(T) > \I_0(T).$$
\item[(ii)]If $v$ is a leaf of $T$, then
$$
3\I_v(T)\geq \I(T)+2.
$$
\item[(iii)] If $v$ and $w$  are distinct leaves of $T$, then 
$$
3\I_{vw}(T)\geq \I_v(T).
$$
\end{itemize} 
\end{lem}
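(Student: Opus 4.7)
The plan is to prove the three parts in the stated order, because the hard case of~(iii) will invoke~(ii). The common engine throughout is the decomposition $T=[B_1,\ldots,B_k]$ together with the recursion
\[
\I_0(T)=\sum_{j=1}^{k}\I(B_j),\qquad \I_1(T)=\prod_{j=1}^{k}\bigl(1+\I(B_j)\bigr),
\]
and the elementary inequality $\prod_j(1+x_j)\ge 1+\sum_j x_j$ for nonnegative reals $x_j$.

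Part~(i) falls out in one line: $\I_1(T)\ge 1+\I_0(T)>\I_0(T)$, with the empty-product/empty-sum convention covering the single-vertex case.

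For part~(ii) I would induct on $|T|$. The base case $T=\bullet$ gives $3\I_v(T)=3=\I(T)+2$. For the inductive step, $v$ is a leaf with $|T|\ge 2$, so $v$ belongs to some root branch, say $B_1$. Setting $a=\I_v(B_1)$, $b=\I(B_1)$, $P=\prod_{j\ge 2}(1+\I(B_j))$, and $S=\sum_{j\ge 2}\I(B_j)$, a direct count gives
\[
\I_v(T)=a(1+P),\qquad \I(T)=b+S+(1+b)P.
\]
The inductive hypothesis yields $b\le 3a-2$, and the elementary inequality gives $S\le P-1$. Substituting both bounds into $\I(T)+2$ and comparing with $3a(1+P)$ finishes the step after a short rearrangement.

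For part~(iii) I would induct on $|T|$ as well and distinguish two cases. If $v$ and $w$ lie in a common root branch $B_1$, then both $\I_{vw}(T)$ and $\I_v(T)$ factor as the corresponding quantity in $B_1$ times the common factor $1+\prod_{j\ge 2}(1+\I(B_j))$, so the statement for $T$ reduces verbatim to the induction hypothesis on $B_1$. If instead $v\in B_1$ and $w\in B_2$, then the infimum of $v$ and $w$ is the root, so every ICS containing both must contain the root, which yields
\[
\I_{vw}(T)=\I_v(B_1)\I_w(B_2)P',\qquad \I_v(T)=\I_v(B_1)\bigl(1+(1+\I(B_2))P'\bigr),
\]
with $P'=\prod_{j\ge 3}(1+\I(B_j))\ge 1$. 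The desired inequality then reduces to $3\I_w(B_2)P'\ge 1+(1+\I(B_2))P'$, which follows immediately from part~(ii) applied to $(B_2,w)$ together with $P'\ge 1$.

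The main obstacle is exactly this cross-branch case of~(iii): it cannot be handled by~(iii)'s own induction hypothesis and genuinely needs~(ii) as input, which is why the three parts must be proved in this order. Everything else is routine induction on $|T|$ guided by the recursion.
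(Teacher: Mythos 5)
Your proof is correct and follows essentially the same route as the paper: part (i) is immediate, parts (ii) and (iii) go by induction on $|T|$, and the cross-branch case of (iii) is settled by invoking (ii), exactly as in the paper's argument. The only differences are cosmetic — you work with the full branch decomposition $[B_1,\dots,B_k]$ and the inequality $\prod_j(1+x_j)\ge 1+\sum_j x_j$, where the paper isolates the branch containing $v$ from the rest of the tree and, for (i), uses the injection obtained by adding the root.
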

\begin{proof}\ 
\begin{itemize}
\item[(i)] Every \ics{} that does not contain the root can be turned into an \ics{} containing the root by simply adding the root. Moreover, the set containing only the root is always an \ics{}. Therefore, $\I_1(B) \geq \I_0(B) + 1 > \I_0(B)$.
\item[(ii)]
We use induction with respect to the order for this part. For $|T|=1$, both sides of the inequality are equal to $3$. Now assume that the statement holds for trees of order at most $k$, and consider a tree $T$ with $k+1$ vertices. 
We can decompose $T$ as in Figure~\ref{fig:lemI1I0_ii}. Here, $A$ is the root branch that contains $v$, and $B$ constitutes the rest of the tree (possibly only the root).

\begin{figure}[htbp]
\centering
\begin{tikzpicture}[scale=1]
	\draw[dashed] (-1,-0.8)--(-1.9,-2)--(-0.1,-2)--(-1,-0.8);
	\draw[dashed] (-0.1,0.2)--(1.9,-2)--(0.2,-2)--(-0.1,0.2);
	\draw (-1,-1)--(0,0);
	\node[fill=black,circle,inner sep=1pt] () at (0,0) {};
	\node[fill=black,circle,inner sep=1pt] () at (-1,-1) {};
	\node[fill=white,label=below:{$A$},circle,inner sep=0pt] () at (-1,-1.2) {};
        \node[fill=white,label=below:{$B$},circle,inner sep=0pt] () at (0.6,-1.2) {};
        \node[fill=black,circle,label=below:{$v$},inner sep=1pt] () at (-1.7,-1.9) {};
     \end{tikzpicture}
\caption{Decomposition of $T$ in the proof of Lemma~\ref{Lem:I1_I0}, part (ii).}\label{fig:lemI1I0_ii}
\end{figure}
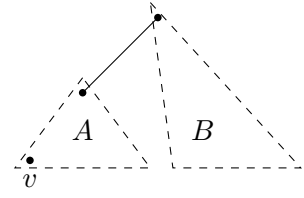

Using the induction hypothesis and part (i) (which also implies that $\I(B) = \I_1(B) + \I_0(B) < 2\I_1(B)$), we obtain
\begin{align*}
\I(T)+2 &= \I_0(T) + \I_1(T) + 2 \\
&= \I(A) + \I_0(B) + \I_1(B)(1+\I(A)) + 2\\
&=\I(A)+\I(B)+\I_1(B)\I(A)+2\\
&\leq\I(A)+2\I_1(B)+\I_1(B)\I(A)+2 \\
&=(\I(A)+2)(\I_1(B)+1)\\
&\leq 3\I_v(A)(\I_1(B)+1)=3\I_v(T).
\end{align*}
The last equality follows by splitting the set of \icss{} containing $v$ into those that also contain the root of $T$ and those that do not contain it. There are $\I_v(A)\I_1(B)$ sets of the former type, and $\I_v(A)$ of the latter. The same idea will be applied repeatedly in the following.
\item[(iii)] Again, we use induction on the order of $T$. For $|T|=3$, we have only one possible tree with two leaves $v$ and $w$, and we obtain
$$
3\I_{vw}(T)=3=\I_v(T).
$$
Now let us assume that the inequality is satisfied whenever $3\leq |T|\leq k$, and consider a tree $T$ such that $|T|=k+1$. If $v$ and $w$ are in the same root branch $A$, then $3\I_{vw}(A)\geq \I_v(A)$ by the induction hypothesis and thus 
$$
3\I_{vw}(T)
=3\I_{vw}(A)(\I_1(T-A)+1) \geq \I_v(A)(\I_1(T-A)+1)= \I_v(T).
$$
Otherwise $v$ and $w$ belong to different root branches $A$ and $B$, respectively. In this case we have
\begin{align*}
3\I_{vw}(T)-\I_v(T)
&=3\I_v(A)\I_w(B)\I_1(T-A-B)-\I_v(A)(\I_1(T-A)+1)\\
&=3\I_v(A)\I_w(B)\I_1(T-A-B)-\I_v(A)((\I(B)+1)\I_1(T-A-B)+1)\\
&=\I_v(A)((3\I_w(B)-\I(B)-1)\I_1(T-A-B)-1)\\
&\geq\I_v(A)(3\I_w(B)-\I(B)-2)\geq 0,
\end{align*}
since we can apply (ii) to $w$ and $B$ to obtain $3\I_w(B)-\I(B)-2\geq 0$.
\end{itemize}
\end{proof}

\section{The structure of minimal trees}

In this section, we establish structural properties of minimal trees, building towards their complete characterisation that will be provided in the following section. Specifically, we first determine restrictions on the possible degrees of vertices in a minimal tree, then on the sizes and heights of branches. Finally, we will be able to exclude a list of trees as potential branches.

\subsection{Degrees}

Our first lemma on the structure of minimal trees provides information on the  vertex degrees.
\begin{lem}
\label{Lem:g8}
Consider a minimal tree $T$ with $n$ vertices.
\begin{itemize}
\item[(i)] If  $v$ is a vertex of $T$ other than the root, then $\deg(v)\neq 2$. In other words, $v$ is a leaf or it has at least two children.
 \item[(ii)] If $n\geq 8$, then the root degree $\deg(\rr(T))$ is $2$.
 \end{itemize}
\end{lem}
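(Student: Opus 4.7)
The plan is to prove both parts by contradiction, in each case constructing a replacement tree $T^{\ast}$ of the same order but strictly smaller $\I$, which contradicts minimality via the monotonicity of~\eqref{eq:recursion} together with Lemma~\ref{lem:branches}.

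For part~(i), assume some non-root vertex $v$ has $\deg(v)=2$; then its branch has the shape $[A]$ for some tree $A$. Let $p$ be the parent of $v$ and $B_2,\ldots,B_k$ the other branches at $p$. I would replace the branch at $p$, namely $[[A],B_2,\ldots,B_k]$, by $[A,\bullet,B_2,\ldots,B_k]$ (delete $v$, attach $A$ directly as a child of $p$, and hang a new pendant leaf at $p$); the vertex count is unchanged. Using $\I([A])=2\I(A)+1$ and expanding~\eqref{eq:recursion}, the $\I$-values of the two branches at $p$ differ by exactly $\I(A)\geq 1$, which propagates to a strict decrease in $\I(T)$.

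For part~(ii), I handle the two forbidden root degrees separately. If the root has degree $1$, then $T=[A]$ with $|A|=n-1\geq 7$; part~(i) forces $A$'s root to have degree $\geq 2$ in $A$, so $A=[C_1,\ldots,C_m]$ with $m\geq 2$. Comparing $T$ with $T^{\ast}=[\bullet,C_1,\ldots,C_m]$ and expanding~\eqref{eq:recursion} gives $\I(T)-\I(T^{\ast})=\sum_{i=1}^{m}\I(C_i)\geq 2>0$, contradicting minimality. To rule out root degree $k\geq 3$, write $T=[B_1,\ldots,B_k]$ with $b_j=\I(B_j)$ and $b_1\geq\cdots\geq b_k$; each $B_j$ is itself minimal, and by part~(i) each $B_j$ is either $\bullet$ or has root degree $\geq 2$, so in particular $B_j\neq M_2$. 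When $k\geq 4$ and $B_k=\bullet$, the swap $T^{\ast}=[B_1,[B_2,\ldots,B_{k-1}]]$ preserves $|T|$, and the recursion gives
\[
\I(T)-\I(T^{\ast})=b_1\Bigl(\prod_{j=2}^{k-1}(1+b_j)-1\Bigr)-\Bigl(\sum_{j=2}^{k-1}b_j\Bigr)(1+b_1),
\]
which is positive because $n\geq 8$ forces $B_1\neq\bullet$ (the only failure, with every $B_j=\bullet$, gives $n=5$). When $k=3$ this bundling fails, so I tailor the swap to each shape of the smaller branches: for $B_3=\bullet,\,B_2\neq\bullet$, move the pendant leaf $B_3$ to become a child of $\rr(B_2)$; for $B_2=B_3=\bullet$ (so $|B_1|\geq 5$), replace the two root-leaves with an $M_3$ subtree and shed a leaf from $B_1$, producing $\I(T)-\I(T^{\ast})=5\I(B_1)-8\I(B_1^{-})-7$; and for $B_3\neq\bullet$, use $T^{\ast}=[[B_1,B_2],B_3^{-}]$ with $B_3^{-}=B_3-\text{leaf}$. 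The remaining case $k\geq 4$ with $B_k\neq\bullet$ is handled analogously by combining $B_1$ and $B_2$ and trimming a leaf from $B_k$.

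The main obstacle is the $k=3$ case: the natural ``bundle the smaller branches'' swap gives a negative contribution, so a distinct swap must be found for each shape of the two smaller branches, and verifying positivity in the delicate configuration $T=[B_1,\bullet,\bullet]$ reduces to the quantitative inequality $5m_{|B_1|}>8m_{|B_1|-1}+7$ (or its analogue when the best leaf-deletion of $B_1$ is not minimal), which I expect to settle by direct inspection of Table~\ref{Tab:Min_Values} for small $|B_1|$ combined with the monotone growth (Lemma~\ref{lem:increasing_m}) of $m_n$ thereafter.
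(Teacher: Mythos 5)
Part (i) of your proposal is essentially the paper's own argument (contract the degree-$2$ vertex onto its child and hang a new leaf off the parent; the difference is exactly $\I(A)$), and your treatment of root degree $1$ and of $k\geq 4$ with a singleton smallest branch are correct, if slightly different in detail from the paper's transformations.

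The genuine gap is in the subcase $T=[B_1,\bullet,\bullet]$. Your replacement $T^{\ast}=[B_1^-,[\bullet,\bullet]]$ reduces the problem to the inequality $5\,m_{|B_1|}>8\,m_{|B_1|-1}+7$, and you propose to settle this by table inspection plus Lemma~\ref{lem:increasing_m}. That lemma only gives $m_n>m_{n-1}$, i.e.\ a growth ratio greater than $1$, whereas you need a ratio greater than $8/5$; the table stops at $n=20$, and nothing available at this stage of the paper controls the ratio $m_n/m_{n-1}$ quantitatively. (The inequality is in fact true, but the ratio dips to roughly $1.63$, so it is close to the threshold $1.6$ and there is no soft argument; the tool one might reach for, $3\I_v(T)\geq\I(T)+2$ from Lemma~\ref{Lem:I1_I0}, only yields $m_n\geq\tfrac{3}{2}m_{n-1}+1$, which is not enough.) The paper sidesteps this entirely by a structural move: when $B_1$ is not a star, it splits $B_1$ into a root branch $B$ with $|B|\geq 2$ and the remainder $A$, and compares $T$ with $[B,\,A\text{ with two extra leaves at its root}]$, obtaining a difference $1+\I(B)+(\I(B)-3)(\I_1(A)-\I_0(A))>0$ that needs no information about $m_n$; the star case is disposed of separately. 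Your remaining sketched subcases ($k=3$ with $B_3\neq\bullet$, and $k\geq4$ with $B_k\neq\bullet$, via bundling two branches and trimming a leaf from the smallest) are plausible but unverified: the sign of the resulting difference is not immediate and requires Lemma~\ref{Lem:I1_I0}(ii) together with some case analysis on small $B_3$; note the paper handles all of $k\geq3$ with second-largest branch of order at least $2$ in one clean step (reattaching $T_3$ below the root of $T_2$), with difference $\I_1(L)\I_0(T_2)\I(T_3)-\I_1(T_2)\I(T_3)>0$, requiring no leaf trimming. To repair your proof you would need either to adopt such a structural transformation in the $[B_1,\bullet,\bullet]$ case or to prove the ratio bound $m_n>\tfrac{8}{5}m_{n-1}$ independently, which is itself a nontrivial undertaking.
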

\begin{proof}\ 
\begin{itemize}
\item[(i)] Suppose that $T$ has a non-root vertex $v$ of degree $2$, and let $H$ be the branch rooted at $v$'s parent vertex. We prove that $H$ cannot be minimal by transforming it to a new tree $H'$ as shown in Figure~\ref{fig:h_transform}: $v$'s unique child is merged with $v$, and a new child is added to $v$'s parent vertex.
\begin{figure}[htbp]
\centering
\begin{tikzpicture}[scale=0.75]
	\draw[dashed] (0,0.2)--(-1,-3)--(0.4,-3)--(0,0.2);
	\draw[dashed] (1.5,-1.8)--(0.5,-3)--(2.5,-3)--(1.5,-1.8);
	\draw (0,0)--(1,-1)--(1.5,-2);
	\node[fill=black,circle,inner sep=1pt] () at (0,0) {};
	\node[fill=black,label=above:{$v$},circle,inner sep=1pt] () at (1,-1) {};
	\node[fill=black,circle,inner sep=1pt] () at (1.5,-2) {};
	\node[fill=white,label=below:{$A$},circle,inner sep=0pt] () at (-0.25,-1.2) {};
        \node[fill=white,label=below:{$B$},circle,inner sep=0pt] () at (1.5,-2.2) {};
        \node[fill=white,label=below:{$H$},circle,inner sep=0pt] () at (0.5,-3.2) {};
        \draw[->, very thick](3,-1)--(4,-1);
	\draw[dashed] (0+6,0.2)--(-1+6,-3)--(0.4+6,-3)--(0+6,0.2);
	\draw[dashed] (1.5+6,-1.8+1)--(0.5+6,-3+1)--(2.5+6,-3+1)--(1.5+6,-1.8+1);
	\draw (0+6,0)--(1.5+6,-1);
	\draw (0+6,0)--(1+8.5,-1);
	\node[fill=black,circle,inner sep=1pt] () at (0+6,0) {};
	\node[fill=black,label=above:{$v$},circle,inner sep=1pt] () at (1.5+6,-1) {};
	\node[fill=black,circle,inner sep=1pt] () at (1+8.5,-1) {};
	\node[fill=white,label=below:{$A$},circle,inner sep=0pt] () at (-0.25+6,-1.2) {};
        \node[fill=white,label=below:{$B$},circle,inner sep=0pt] () at (1.5+6,-1.2) {};
        \node[fill=white,label=below:{$H'$},circle,inner sep=0pt] () at (0.5+6,-3.2) {};
     \end{tikzpicture}
\caption{Transformation of $H$.}\label{fig:h_transform}
\end{figure}
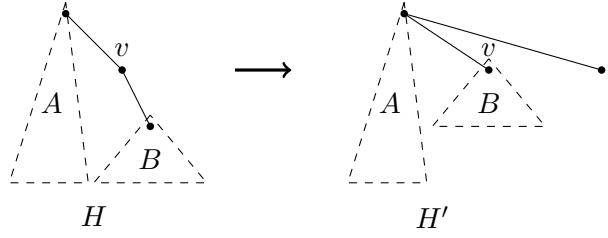
We find that
\begin{align*}
\I(H)-\I(H')
&= \I_0(A)+2\I(B)+1+\I_1(A)(2\I(B)+2)\\
&\quad-\I_0(A)-\I(B)-1-2\I_1(A)(\I(B)+1)=\I(B)>0,
\end{align*}
which shows that $H$ is not minimal, contradicting Lemma~\ref{lem:branches}.

\item[(ii)]

Assume now that $n\geq 8$, 
and that $k = \deg(\rr(T))\neq 2$. We prove that there is a rooted tree $T'$ such that $|T'|=|T|=n$ and 
$\I(T')<\I(T)$, a contradiction from which the desired statement follows immediately. We have the following cases:

\begin{itemize}
\item[(a)] Suppose that the root degree $k$ is $1$, i.e., $T$ has only one branch $A$. 
We decompose this branch further, as shown in Figure~\ref{fig:T_further_decomp}, into the rightmost branch $C$ and the rest $B$ (note that its root has at least two children by part (i)).
Taking $T'=[B,C]$, we have
\begin{align*}
\I(T)-\I(T') &=2\big(\I_0(B)+\I(C)+\I_1(B)(\I(C)+1)\big)+1-\I(B)\I(C)-2\I(B)-2\I(C)-1\\
&=(2\I_1(B)-\I(B))\I(C) >0,
\end{align*}
where the final inequality is a consequence of Lemma~\ref{Lem:I1_I0}, part (i). This is what we wanted to obtain.

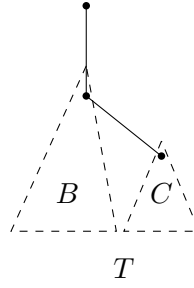
\begin{figure}[htbp]
\centering
 \begin{tikzpicture}[scale=1]
	\draw[dashed] (0,0.2)--(-1,-2)--(0.4,-2)--(0,0.2);
	\draw[dashed] (1,-0.8)--(0.5,-2)--(1.5,-2)--(1,-0.8);
	\draw (0,-0.2)--(1,-1);
	\draw (0,-0.2)--(0,1);
	\node[fill=black,circle,inner sep=1pt] () at (0,-0.2) {};
	\node[fill=black,circle,inner sep=1pt] () at (1,-1) {};
	\node[fill=black,circle,inner sep=1pt] () at (0,1) {};
	\node[fill=white,label=below:{$B$},circle,inner sep=0pt] () at (-0.25,-1.2) {};
        \node[fill=white,label=below:{$C$},circle,inner sep=0pt] () at (1,-1.2) {};
        \node[fill=white,label=below:{$T$},circle,inner sep=0pt] () at (0.5,-2.2) {};
      \end{tikzpicture}
\caption{Decomposition of $T$ in case (a).}\label{fig:T_further_decomp}
\end{figure}

\item[(b)] Now assume that $k\geq 3$. Here and in the following cases, we let $T_1,T_2,\ldots,T_k$ be the root branches of $T$, so that $T = [T_1,T_2,\ldots,T_k]$. Without loss of generality, we can assume that $|T_1| \geq |T_2| \geq \cdots \geq |T_k|$. Each branch is a minimal tree, so Lemma~\ref{lem:increasing_m} shows that $\I(T_1) \geq \I(T_2) \geq \cdots \geq \I(T_k)$. Let us first consider the case that $|T_2|\geq 2$. Define $L=[T_1,T_4,T_5,\dots,T_k]$, and consider the tree $T'$ constructed as in Figure~\ref{fig:T'construction}.
\begin{figure}[htbp]
\centering
 \begin{tikzpicture}[scale=1]
	\draw[dashed] (0,0.2)--(-0.5,-2)--(0.4,-2)--(0,0.2);
	\draw[dashed] (1,-0.8)--(0.5,-2)--(1.5,-2)--(1,-0.8);
	\draw[dashed] (-1,1.2)--(-0.75,-2)--(-1.75,-2)--(-1,1.2);
	\draw (0,-0.1)--(1,-1);
	\draw (0,-0.1)--(-1,1);
	\node[fill=black,circle,inner sep=1pt] () at (0,-0.1) {};
	\node[fill=black,circle,inner sep=1pt] () at (1,-1) {};
	\node[fill=black,circle,inner sep=1pt] () at (-1,1) {};
	\node[fill=white,label=below:{$L$},circle,inner sep=0pt] () at (-1.25,-1.2) {};
	\node[fill=white,label=below:{$T_2$},circle,inner sep=0pt] () at (0,-1.2) {};
        \node[fill=white,label=below:{$T_3$},circle,inner sep=0pt] () at (1,-1.2) {};
        \node[fill=white,label=below:{$T'$},circle,inner sep=0pt] () at (0,-2.2) {};
      \end{tikzpicture}
\caption{Construction of $T'$ in case (b).}\label{fig:T'construction}
\end{figure}
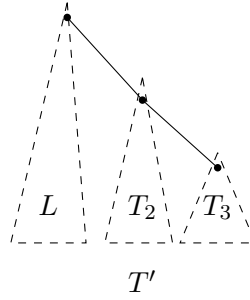

Repeated application of the recursions for $\I_1$ and $\I_0$ yields 
 \begin{align*}
\I(T)-\I(T')
 &=\I_0(L)+\I(T_2)+\I(T_3)+\I_1(L)(\I(T_2)+1)(\I(T_3)+1)-\I_0(L)-\I(T_2)\\
 &\qquad -\I(T_3)-\I_1(T_2)\I(T_3)-\I_1(L)\big(\I(T_2)+\I(T_3)+\I_1(T_2)\I(T_3)+1\big)\\
 &=\I_1(L)\I_0(T_2)\I(T_3)-\I_1(T_2)\I(T_3)\\
 &\geq(\I_1(L)-\I_1(T_2))\I(T_3) \\
 &\geq (\I(T_1)-\I_1(T_2))\I(T_3)>0,
 \end{align*}
which completes the proof in this case.
 
\item[(c)] We are left with the situation that $|T_2| = 1$, so $|T_2|=|T_3|=\dots=|T_k|=1$, i.e. $T = [T_1,\bullet,\bullet,\ldots,\bullet]$. Assume now that $k\geq 4$. We can once again consider a different tree $T'$, defined by
$T'=[T_1,\bullet,\ldots,\bullet,[\bullet,\bullet]],$
replacing three of the singleton branches of $T$ by a branch of order $3$. We have
 \begin{align*}
 \I(T)-\I(T')
 &= k-1+ \I(T_1) + 2^{k-1}(\I(T_1)+1)\\
 &\qquad- \big(6+(k-4)+\I(T_1)+7 \cdot 2^{k-4}(\I(T_1)+1)\big)\\
 &= 2^{k-4}(\I(T_1)+1)- 3>0.
 \end{align*}
The last inequality trivially holds if $k \geq 5$. If $k=4$, we simply need to note that $|T_1| \geq 2$ and thus $\I(T_1) \geq 3$, since we are assuming that $T$ has at least eight vertices. Note that this construction also shows that a star of order greater than $5$ cannot be a minimal tree.

\item[(d)] It only remains to consider the case $k=3$ and $|T_2|=|T_3|=1$. For this we have to consider subcases depending on the structure of $T_1$.

 If the branch $T_1$ is not a star (in other words, if there are other vertices than the root and its children), then decompose $T_1$ into a root branch $B$ with at least two vertices and the rest, which is denoted by $A$. We define $T'$ as in Figure~\ref{fig:T'construction_d}: it has two root branches, one of which is $B$, while the other is $A$ with two additional leaves attached to its root.
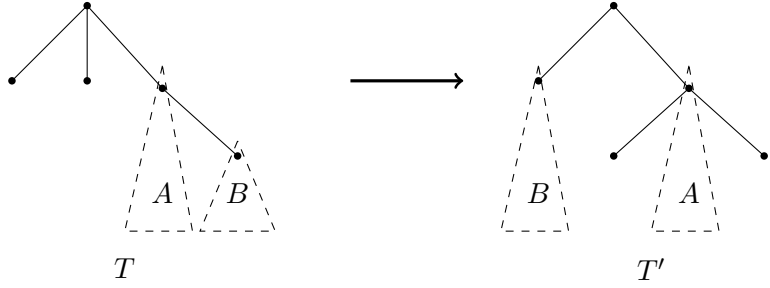
\begin{figure}[htbp]
\centering
 \begin{tikzpicture}[scale=1]
	\draw[dashed] (0,0.2)--(-0.5,-2)--(0.4,-2)--(0,0.2);
	\draw[dashed] (1,-0.8)--(0.5,-2)--(1.5,-2)--(1,-0.8);
	\draw (-1,0)--(-1,1)--(-2,0);
	\draw (0,-0.1)--(1,-1);
	\draw (0,-0.1)--(-1,1);
	\node[fill=black,circle,inner sep=1pt] () at (-1,0) {};
	\node[fill=black,circle,inner sep=1pt] () at (-2,0) {};
	\node[fill=black,circle,inner sep=1pt] () at (0,-0.1) {};
	\node[fill=black,circle,inner sep=1pt] () at (1,-1) {};
	\node[fill=black,circle,inner sep=1pt] () at (-1,1) {};
	\node[fill=white,label=below:{$A$},circle,inner sep=0pt] () at (0,-1.2) {};
    \node[fill=white,label=below:{$B$},circle,inner sep=0pt] () at (1,-1.2) {};
    \node[fill=white,label=below:{$T$},circle,inner sep=0pt] () at (-0.5,-2.2) {};
    	\draw[->,very thick] (2.5,0)--(4,0);
	\draw[dashed] (5+0,0.2)--(5-0.5,-2)--(5+0.4,-2)--(5+0,0.2);
	\draw[dashed] (7+0,0.2)--(7-0.5,-2)--(7+0.4,-2)--(7+0,0.2);
	\draw (7-1,1)--(7-2,0);
	\draw (7,-0.1)--(6,-1);
	\draw (7+0,-0.1)--(7+1,-1);
	\draw (7+0,-0.1)--(7-1,1);
	\node[fill=black,circle,inner sep=1pt] () at (6,-1) {};
	\node[fill=black,circle,inner sep=1pt] () at (7-2,0) {};
	\node[fill=black,circle,inner sep=1pt] () at (7+0,-0.1) {};
	\node[fill=black,circle,inner sep=1pt] () at (7+1,-1) {};
	\node[fill=black,circle,inner sep=1pt] () at (7-1,1) {};
	\node[fill=white,label=below:{$B$},circle,inner sep=0pt] () at (5+0,-1.2) {};
    \node[fill=white,label=below:{$A$},circle,inner sep=0pt] () at (6+1,-1.2) {};
    \node[fill=white,label=below:{$T'$},circle,inner sep=0pt] () at (6.5+0,-2.2) {};        
    \end{tikzpicture}
\caption{Construction of $T'$ in case (d).}\label{fig:T'construction_d} 
\end{figure}
Since $|B|\geq 2$ and thus $\I(B)\geq 3$, we obtain (again by repeated applications of the recursions for $\I_0$ and $\I_1$)
\begin{align*}
\I(T)- \I(T')
&= 2+\I(A)+\I(B)+\I_1(A)\I(B) + 4\big(\I(A)+\I(B)+\I_1(A)\I(B)+1\big)\\
&\qquad -\I(B)-2-\I(A)-3\I_1(A)-(\I(B)+1)(3+\I(A)+3\I_1(A))\\
&=1+\I(B)+(\I(B)-3)(\I_1(A)-\I_0(A))>0.
\end{align*}
If $T_1$ is a star, then its order is at least $5$ (since $T$ has at least eight vertices). As mentioned at the end of the previous case, $T_1$ cannot be minimal, contradicting Lemma~\ref{lem:branches}, unless its order is exactly $5$. However, if $T_1$ is a star of order $5$, then $T$ is not minimal either, as can be seen from Table~\ref{Tab:Min_Small}.
\end{itemize}
\end{itemize}
\end{proof}

The second part of Lemma~\ref{Lem:g8} implies (by Lemma~\ref{lem:branches})  that if $v$ is any vertex in a minimal tree such that the branch rooted at $v$ has order at least $8$, then $v$ has exactly two children.

\subsection{Branch sizes} Next, we determine several properties of branch sizes in a minimal tree.

\begin{lem}
\label{Lem:Form2}
Let $T$ be a minimal tree.
\begin{itemize}
\item[(i)] If $T$ can be decomposed as $T = [[A,B],C]$, as shown in Figure~\ref{fig:ABC_decomp}, then
\begin{equation}
\label{Eq:ABC}
\max\{|A|,|B|\}\leq|C|.
\end{equation}
\item[(ii)] If $T$ can be decomposed as $T = [[A,B],[[D,E],C]]$, as shown in Figure~\ref{fig:ABCDE_decomp}, then
\begin{equation}
\label{Eq:ABCD3}
\max\{|D|,|E|\}\leq\min\{|A|,|B|\}.
\end{equation}
\end{itemize}
\end{lem}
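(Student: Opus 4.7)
The plan is to prove both parts by swap arguments: replace $T$ by a locally modified tree $T'$ of the same order and invoke the minimality of $T$ together with Lemma~\ref{lem:branches} (branches of a minimal tree are themselves minimal) and Lemma~\ref{lem:increasing_m} (for minimal trees, order and $\I$-value are monotone). I will work with the invariant $\J$ and use the recursion $\J([X,Y]) = \J(X)\J(Y) - 1$ from~\eqref{eq:Jrec}, setting $a = \J(A)$, $b = \J(B)$, $c = \J(C)$, $d = \J(D)$, $e = \J(E)$ so that every tree appearing below has $\J$-value a short polynomial in these.

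For part (i), a single swap suffices. Compare $T = [[A,B], C]$ with $T' = [[C,B], A]$, obtained by exchanging the positions of $A$ and $C$. A direct expansion gives
$$
\J(T) - \J(T') = a - c,
$$
so minimality of $T$ forces $a \leq c$, i.e.\ $\I(A) \leq \I(C)$. Since $A$ and $C$ are themselves minimal trees, Lemma~\ref{lem:increasing_m} converts this into $|A| \leq |C|$. The mirror swap $T \to [[A,C], B]$ yields $|B| \leq |C|$.

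For part (ii), one swap is not enough, and I plan to combine two. The first is a \emph{block swap} $T \to T^{**} = [[D,E], [[A,B], C]]$, in which the subtrees $[A,B]$ and $[D,E]$ exchange places wholesale. Expanding $\J$ gives
$$
\J(T) - \J(T^{**}) = de - ab,
$$
so minimality yields the product bound $de \leq ab$. The second is a \emph{single-branch swap} $T \to T_1 = [[D,B], [[A,E], C]]$, which exchanges only the branches $D$ and $A$. Here the computation factors as
$$
\J(T) - \J(T_1) = (d-a)\bigl(b(c+1) - ce\bigr),
$$
so minimality yields $(d-a)(b(c+1)-ce) \leq 0$.

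Now suppose, towards a contradiction, that $|D| > |A|$. Since $D$ and $A$ are both minimal, Lemma~\ref{lem:increasing_m} gives $d > a$, and the single-branch swap then forces $b(c+1) \leq ce$; because $b, c > 0$, this is only possible when $e > b$. But $d > a$ and $e > b$ together imply $de > ab$, contradicting the block-swap inequality. Hence $|D| \leq |A|$, and the three entirely analogous swaps $D \leftrightarrow B$, $E \leftrightarrow A$, $E \leftrightarrow B$ yield $|D| \leq |B|$, $|E| \leq |A|$, $|E| \leq |B|$. The main subtlety is that the single-branch swap alone produces only the conditional statement ``$d > a$ implies $e > b$''; it is the global product inequality $de \leq ab$ coming from the block swap that converts this conditional into the desired size bound.
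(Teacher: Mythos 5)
Your proof is correct, and part (i) is in substance identical to the paper's: the paper writes $\I(T)=\J(A)\J(B)\J(C)-3-\J(C)$, observes the product is symmetric in $A,B,C$, and concludes $\J(C)\geq\J(A),\J(B)$ by minimality, which is exactly your swap $T\to[[C,B],A]$ made explicit. In part (ii) you reach the same conclusion by a genuinely different route. The paper expands $\I(T)$, isolates the part that is invariant under permutations of $A,B,D,E$, and applies the rearrangement inequality once to the remainder $\J(C)(\J(A)\J(B)+\J(D)\J(E))+\J(A)\J(B)$, reading off that $\J(A),\J(B)$ must dominate $\J(D),\J(E)$. You instead extract two separate swap inequalities --- the block swap giving $de\leq ab$ and the single-branch swap giving $(d-a)\bigl(b(c+1)-ce\bigr)\leq 0$ --- and combine them by contradiction; I have checked both expansions and the factorisation, and the deduction $d>a\Rightarrow e>b\Rightarrow de>ab$ is sound since all $\J$-values are at least $3$. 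The paper's version is slightly slicker (one symmetric invariant, one appeal to rearrangement, and it incidentally yields the ``either/or'' dichotomy that is reused in Lemma~\ref{Lem:GenComp}(i)); yours is more elementary and self-contained, replacing the rearrangement inequality by two explicit polynomial identities that can be verified mechanically, at the cost of needing both swaps to close the argument, as you correctly point out. Both correctly reduce $\J$-comparisons to size comparisons via Lemmas~\ref{lem:branches} and~\ref{lem:increasing_m}.
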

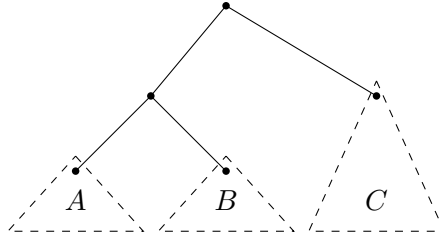
\begin{figure}[htbp]
\centering
\begin{tikzpicture}[scale=1]
	\draw (4-1,-1.2)--(4+0,0)--(5+1,-1.2);
	\draw (3-1,-2.2)--(3+0,-1.2)--(3+1,-2.2);
	\draw[dashed] (5+1,-1)--(5+1.9,-3)--(5+0.1,-3)--(5+1,-1);
	\draw[dashed] (3+1,-2)--(3+1.9,-3)--(3+0.1,-3)--(3+1,-2);
	\draw[dashed] (1+1,-2)--(1+1.9,-3)--(1+0.1,-3)--(1+1,-2);
    \node[fill=black,circle,inner sep=1pt] () at (1+1,-2.2) {}; 
    \node[fill=black,circle,inner sep=1pt] () at (3+1,-2.2) {}; 
    \node[fill=black,circle,inner sep=1pt] () at (4+0,0) {}; 
    \node[fill=black,circle,inner sep=1pt] () at (4-1,-1.2) {};
    \node[fill=black,circle,inner sep=1pt] () at (5+1,-1.2) {};
    \node[fill=white,label=below:{$C$},circle,inner sep=0pt] () at (5+1,-2.3) {};
    \node[fill=white,label=below:{$B$},circle,inner sep=0pt] () at (3+1,-2.3) {};
    \node[fill=white,label=below:{$A$},circle,inner sep=0pt] () at (1+1,-2.3) {};
    \end{tikzpicture} 
\caption{The decomposition $T = [[A,B],C]$ for Lemma \ref{Lem:Form2}.}\label{fig:ABC_decomp}
\end{figure}
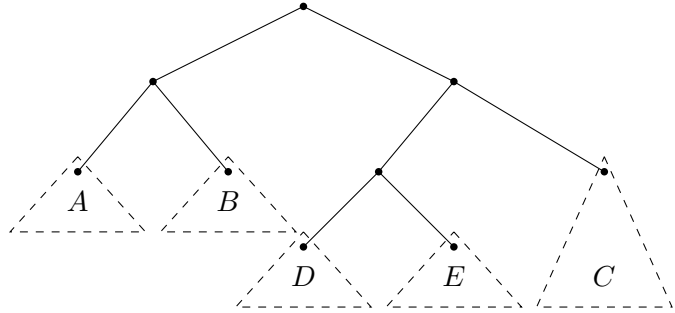
\begin{figure}[htbp]
\centering
\begin{tikzpicture}[scale=1]
    \draw (0,0)--(2,1);
	\draw (-1,-1.2)--(0,0)--(1,-1.2);
	\draw[dashed] (-1,-1)--(-1.9,-2)--(-0.1,-2)--(-1,-1);
	\draw[dashed] (1,-1)--(1.9,-2)--(0.1,-2)--(1,-1);
    \node[fill=black,circle,inner sep=1pt] () at (0,0) {}; 
    \node[fill=black,circle,inner sep=1pt] () at (-1,-1.2) {};
    \node[fill=black,circle,inner sep=1pt] () at (1,-1.2) {};
    \node[fill=black,circle,inner sep=1pt] () at (2,1) {};
    \node[fill=white,label=below:{$A$},circle,inner sep=0pt] () at (-1,-1.3) {};
    \node[fill=white,label=below:{$B$},circle,inner sep=0pt] () at (1,-1.3) {};
    \draw (4+0,0)--(2,1);
	\draw (4-1,-1.2)--(4+0,0)--(5+1,-1.2);
	\draw (3-1,-2.2)--(3+0,-1.2)--(3+1,-2.2);
	\draw[dashed] (5+1,-1)--(5+1.9,-3)--(5+0.1,-3)--(5+1,-1);
	\draw[dashed] (3+1,-2)--(3+1.9,-3)--(3+0.1,-3)--(3+1,-2);
	\draw[dashed] (1+1,-2)--(1+1.9,-3)--(1+0.1,-3)--(1+1,-2);
    \node[fill=black,circle,inner sep=1pt] () at (1+1,-2.2) {}; 
    \node[fill=black,circle,inner sep=1pt] () at (3+1,-2.2) {}; 
    \node[fill=black,circle,inner sep=1pt] () at (4+0,0) {}; 
    \node[fill=black,circle,inner sep=1pt] () at (4-1,-1.2) {};
    \node[fill=black,circle,inner sep=1pt] () at (5+1,-1.2) {};
    \node[fill=white,label=below:{$C$},circle,inner sep=0pt] () at (5+1,-2.3) {};
    \node[fill=white,label=below:{$E$},circle,inner sep=0pt] () at (3+1,-2.3) {};
    \node[fill=white,label=below:{$D$},circle,inner sep=0pt] () at (1+1,-2.3) {};
    \end{tikzpicture} 
\caption{The decomposition $T = [[A,B],[[D,E],C]]$ for Lemma \ref{Lem:Form2}.}\label{fig:ABCDE_decomp}
\end{figure}
\begin{proof}
\
\begin{itemize}
\item[(i)] If $T=[[A,B],C]$, then we have, by~\eqref{eq:Jrec},
$$\I(T) = \J(T) - 2 =  (\J(A)\J(B)-1)\J(C)-3=\J(A)\J(B)\J(C)-3 -\J(C).$$
Note that the product $\J(A)\J(B)\J(C)$ remains invariant under permutations of $A$, $B$ and $C$. By minimality of $T$, $\J(C) \geq \J(A)$ and $\J(C) \geq \J(B)$, as one could otherwise interchange $A$ and $C$ ($B$ and $C$, respectively) to obtain a tree of the same order with fewer \icss{}. Lemma~\ref{lem:increasing_m} now yields the statement.
\item[(ii)] If $T = [[A,B],[[D,E],C]]$, then we obtain, again by~\eqref{eq:Jrec},
\begin{align*}
\I(T)
&=(\J(A)\J(B)-1)\big((\J(D)\J(E)-1)\J(C)-1\big)-3\\
&=\J(A)\J(B)\J(C)\J(D)\J(E) + \J(C)-2 \\
&\qquad - \big( \J(C)(\J(A)\J(B) + \J(D)\J(E)) + \J(A)\J(B) \big).
\end{align*}
Note that $\J(A)\J(B)\J(C)\J(D)\J(E)+\J(C)-2$ remains invariant under permutations of $A$, $B$, $D$ and $E$. The remaining expression
$$\J(C)(\J(A)\J(B) + \J(D)\J(E)) + \J(A)\J(B)$$
must therefore be maximal over all such permutations. By the rearrangement inequality, $\J(A)\J(B) + \J(D)\J(E)$ can only be maximal if either $\J(A),\J(B) \geq \J(D),\J(E)$ or $\J(A),\J(B) \leq \J(D),\J(E)$. The product $\J(A)\J(B)$ is clearly maximal if and only if $\J(A),\J(B) \geq \J(D),\J(E)$. This argument combined with Lemma~\ref{lem:increasing_m} yields the desired conclusion.
\end{itemize}
\end{proof}
Lemma~\ref{Lem:Form2} is just a special case of the following more general lemma. It will serve as the initial case of an induction proof.
\begin{lem}
\label{Lem:GenComp}
Let $T$ be a minimal tree that can be decomposed as $[[A,B]:_v\hspace{-0.14cm}S_w\hspace{-0.14cm}:[C,D]]$, i.e., as shown in Figure~\ref{fig:LemGenComp}, where $v$ and $w$ are leaves of $S$.
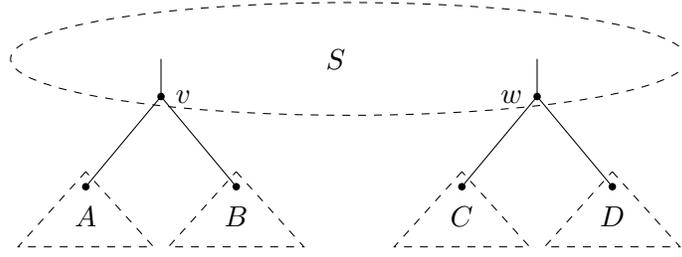
\begin{figure}[htbp]
\centering
\begin{tikzpicture}[scale=1]
    \draw (0,0)--(0,0.5);
	\draw (-1,-1.2)--(0,0)--(1,-1.2);
	\draw[dashed] (-1,-1)--(-1.9,-2)--(-0.1,-2)--(-1,-1);
	\draw[dashed] (1,-1)--(1.9,-2)--(0.1,-2)--(1,-1);
    \node[fill=black,label=right:{$v$},circle,inner sep=1pt] () at (0,0) {}; 
    \node[fill=black,circle,inner sep=1pt] () at (-1,-1.2) {};
    \node[fill=black,circle,inner sep=1pt] () at (1,-1.2) {};
    \node[fill=white,label=below:{$A$},circle,inner sep=0pt] () at (-1,-1.3) {};
    \node[fill=white,label=below:{$B$},circle,inner sep=0pt] () at (1,-1.3) {};
    \draw (5+0,0)--(5+0,0.5);
	\draw (5-1,-1.2)--(5+0,0)--(5+1,-1.2);
	\draw[dashed] (5-1,-1)--(5-1.9,-2)--(5-0.1,-2)--(5-1,-1);
	\draw[dashed] (5+1,-1)--(5+1.9,-2)--(5+0.1,-2)--(5+1,-1);
    \node[fill=black,label=left:{$w$},circle,inner sep=1pt] () at (5+0,0) {}; 
    \node[fill=white,label=right:{$S$},circle,inner sep=1pt] () at (2,0.5) {}; 
    \node[fill=black,circle,inner sep=1pt] () at (5-1,-1.2) {};
    \node[fill=black,circle,inner sep=1pt] () at (5+1,-1.2) {};
    \node[fill=white,label=below:{$C$},circle,inner sep=0pt] () at (5-1,-1.3) {};
    \node[fill=white,label=below:{$D$},circle,inner sep=0pt] () at (5+1,-1.3) {};
    \draw[dashed] (2.5,0.5) ellipse (4.5cm and 0.75cm);
    \end{tikzpicture} 
\caption{The setting of Lemma~\ref{Lem:GenComp}.}\label{fig:LemGenComp}
\end{figure}
\begin{itemize}
\item[(i)] We have 
$$
\max\{|A|,|B|\}\leq\min\{|C|,|D|\}
$$
or
$$
\max\{|C|,|D|\}\leq\min\{|A|,|B|\}.
$$
\item[(ii)] If furthermore $\dd(v,\rr(T))>\dd(w,\rr(T))$, then we must have
$$
\max\{|A|,|B|\}\leq\min\{|C|,|D|\}.
$$
\end{itemize}
\end{lem}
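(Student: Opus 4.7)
The plan is to turn the problem into an optimisation over how the four values
$\J(A), \J(B), \J(C), \J(D)$ are partitioned into the two pairs occupying the positions under $v$ and $w$. Starting from Lemma~\ref{Lem:S_A}(ii), set $P = \I_{vw}(S)$, $Q = \I_v(S)$, $R = \I_w(S)$, $X = \J(A)\J(B)$ and $Y = \J(C)\J(D)$; using $\I([A,B]) = X-3$ and $\I([C,D]) = Y-3$ from~\eqref{eq:Jrec}, a direct expansion yields
$$
\I(T) \;=\; P\,XY \;+\; (Q - 4P)\,X \;+\; (R - 4P)\,Y \;+\; c(S),
$$
where $c(S)$ depends only on $S$. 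The leading product $P\,XY = P\,\J(A)\J(B)\J(C)\J(D)$ is symmetric in the four branches, while Lemma~\ref{Lem:I1_I0}(iii) gives $3P \ge Q$ and $3P \ge R$, so the coefficients $Q - 4P$ and $R - 4P$ are strictly negative.

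For part~(i), sort the $\J$-values as $j_1 \le j_2 \le j_3 \le j_4$ and compare the three possible partitions of $\{j_1, j_2, j_3, j_4\}$ into two pairs. A short signed-difference calculation (using only the negativity of $Q-4P$ and $R-4P$) shows that the partition $\{j_1,j_2\}$ vs.\ $\{j_3,j_4\}$ is optimal, so that any minimiser must have the pair under $v$ and the pair under $w$ size-separated. Lemma~\ref{lem:increasing_m} then converts this $\J$-separation into the corresponding separation of the orders $|A|, |B|, |C|, |D|$ and yields part~(i).

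For part~(ii), compare $T$ with the tree $T^\star$ obtained by swapping the two entire pairs $[A,B]$ and $[C,D]$ between the positions $v$ and $w$; a direct calculation gives
$$
\I(T) - \I(T^\star) \;=\; (Q - R)(X - Y),
$$
which must be nonpositive by minimality. Hence $Q \ge R$ implies $X \le Y$, and combined with part~(i) this forces $\max\{|A|,|B|\} \le \min\{|C|,|D|\}$. What remains is the monotonicity claim
$$
\dd(v,\rr(T)) > \dd(w,\rr(T)) \;\Longrightarrow\; \I_v(S) \ge \I_w(S),
$$
which is where the induction promised by the paper enters: Lemma~\ref{Lem:Form2}(ii) handles the base case (the family of skeletons in which $w$ lies exactly one level deeper than $v$ below the root), and the inductive step reduces a general skeleton $S$ to a smaller one by pruning or contracting a branch off the $v$-to-$w$ path, using~\eqref{eq:recursion} (and~\eqref{eq:Jrec}) to control the effect on $\I_v(S)$ and $\I_w(S)$. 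This monotonicity step is the main obstacle, as preserving $\I_v(S) \ge \I_w(S)$ through the reduction requires a case analysis based on where the modified branch sits relative to $v$, $w$, and their common ancestor in $S$.
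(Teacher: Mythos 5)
Your treatment of part (i) is correct and is essentially the paper's own argument: the same expansion via Lemma~\ref{Lem:S_A}(ii), the same observation that $4\I_{vw}(S)-\I_v(S)$ and $4\I_{vw}(S)-\I_w(S)$ are positive by Lemma~\ref{Lem:I1_I0}(iii), and the same rearrangement-type comparison of the three pairings, converted to a statement about orders via Lemma~\ref{lem:increasing_m}. No issues there.

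Part (ii), however, has a genuine gap. Your swap identity $\I(T)-\I(T^\star)=(\I_v(S)-\I_w(S))(X-Y)$ is correct, but it only delivers the conclusion if you can prove $\I_v(S)>\I_w(S)$ strictly (with $\I_v(S)=\I_w(S)$ you learn nothing, and you have not addressed that tie), and more importantly the monotonicity claim ``$\dd(v,\rr(T))>\dd(w,\rr(T))$ implies $\I_v(S)\geq\I_w(S)$'' is precisely the content you would need to prove --- and it is \emph{false} for general skeletons $S$. For example, take $S$ with root $r$, children $a$ and $b$, where $a$ has $w$ and $k$ further leaf children and $b$ starts a bare path $b$--$c$--$v$: then $\I_w(S)=9(2^k+1)$ while $\I_v(S)=4(2^{k+1}+k+3)$, so $\I_w(S)>\I_v(S)$ for $k\geq 5$ even though $v$ is deeper. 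Such an $S$ cannot occur inside a minimal $T$ (the vertex $c$ would violate Lemma~\ref{Lem:g8}(i)), but this shows your proposed induction --- ``pruning or contracting a branch off the $v$-to-$w$ path'' --- cannot be carried out at the level of generality of $S$ alone; it must invoke the minimality of $T$ at every step, and you give no indication of how the inequality $\I_v(S)\geq\I_w(S)$ survives the reduction. The paper avoids this comparison entirely: it first treats $\dd(v,\rr(T))=\dd(w,\rr(T))+1$ by induction on $\dd(w,\rr(T))$, with Lemma~\ref{Lem:Form2}(ii) as base case; in the inductive step it applies the hypothesis to the \emph{parents} $v',w'$ to get $|E|\leq|F|$ for the branches rooted at $v$ and $w$, deduces $\min\{|A|,|B|\}<\max\{|C|,|D|\}$ from $|A|+|B|+1\leq|C|+|D|+1$ (unless all four orders coincide), and then lets part (i) pick out the correct alternative; the case $\dd(v,\rr(T))>\dd(w,\rr(T))+1$ is reduced to this one via Lemma~\ref{Lem:g8}(ii). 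As it stands, your part (ii) is a reduction to an unproved (and delicate) claim rather than a proof.
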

\begin{proof}\ 
\begin{itemize}
\item[(i)] We apply the second part of Lemma~\ref{Lem:S_A} and~\eqref{eq:Jrec} to get
\begin{align*}
\I(T)
&=\I_{vw}(S)(\J(A)\J(B)-3)(\J(C)\J(D)-3)+\I(S-\{v,w\})\\
&\qquad+(\I_v(S)-\I_{vw}(S))(\J(A)\J(B)-3)+(\I_w(S)-\I_{vw}(S))(\J(C)\J(D)-3)\\
&=\I_{vw}(S)(\J(A)\J(B)\J(C)\J(D)+15) - 3(\I_v(S)+\I_w(S))+\I(S-\{v,w\}) \\
&\qquad-(4\I_{vw}(S)-\I_v(S))\J(A)\J(B) - (4\I_{vw}(S)-\I_w(S))\J(C)\J(D).
\end{align*}
Note that $\I_{vw}(S)(\J(A)\J(B)\J(C)\J(D)+15) - 3(\I_v(S)+\I_w(S))+\I(S-\{v,w\})$ is invariant under permutations of $A$, $B$, $C$ and $D$, so
$$(4\I_{vw}(S)-\I_v(S))\J(A)\J(B) + (4\I_{vw}(S)-\I_w(S))\J(C)\J(D)$$
has to be maximal over all such permutations. By part (iii) of Lemma~\ref{Lem:I1_I0}, both $4\I_{vw}(S)-\I_v(S)$ and $4\I_{vw}(S)-\I_w(S)$ are positive. So we can apply the rearrangement inequality again to show that either $\J(A),\J(B) \geq \J(C),\J(D)$ or $\J(A),\J(B) \leq \J(C),\J(D)$ (as one could otherwise permute $A,B,C,D$ in such a way that the resulting tree has fewer \icss{}). Lemma~\ref{lem:increasing_m} now yields the statement.
\item[(ii)] For the second part, we first restrict ourselves to the case where $\dd(v,\rr(T))=\dd(w,\rr(T))+1$. For this we use induction on the distance $\dd(w,\rr(T))$. For $\dd(\rr(T),w)=1$, the statement is precisely the second part of Lemma~\ref{Lem:Form2}. Assume now that the claim holds when $\dd(\rr(T),w)=k$ for some $k\geq 1$, and consider the case where $\dd(\rr(T),w)=k+1.$ Let $v'$ and $w'$ be the parents of $v$ and $w$ respectively, so that $\dd(\rr(T),w') = \dd(\rr(T),v') - 1 = k$.
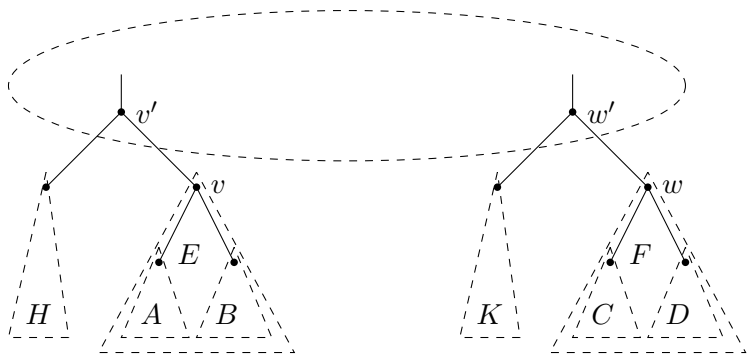
\begin{figure}[htbp]
\centering
 \begin{tikzpicture}[scale=1]
	\draw (-1,-1)--(0,0)--(1,-1)--(1.5,-2);
	\draw (0.5,-2)--(1,-1);
	\draw(0,0)--(0,0.5);
	\draw[dashed] (-1,-0.8)--(-1.5,-3)--(-0.7,-3)--(-1,-0.8);
	\draw[dashed] (0.5,-1.8)--(0,-3)--(0.9,-3)--(0.5,-1.8);
	\draw[dashed] (1.5,-1.8)--(1,-3)--(2,-3)--(1.5,-1.8);
	\draw[dashed] (1,-0.8)--(-0.3,-3.2)--(2.3,-3.2)--(1,-0.8);
	\node[fill=black,label=right:{$w'$},circle,inner sep=1pt] () at (0,0) {};
	\node[fill=black,circle,inner sep=1pt] () at (-1,-1) {};
	\node[fill=black,label=right:{$w$},circle,inner sep=1pt] () at (1,-1) {};
	\node[fill=black,circle,inner sep=1pt] () at (1.5,-2) {};
	\node[fill=black,circle,inner sep=1pt] () at (0.5,-2) {};
	\node[fill=white,label=below:{$K$},circle,inner sep=0pt] () at (-1.1,-2.4) {};
	\node[fill=white,label=below:{$C$},circle,inner sep=0pt] () at (0.4,-2.4) {};
	\node[fill=white,label=below:{$D$},circle,inner sep=0pt] () at (1.4,-2.4) {};
	\node[fill=white,label=below:{$F$},circle,inner sep=0pt] () at (0.9,-1.6) {};
	\draw (-6-1,-1)--(-6+0,0)--(-6+1,-1)--(-6+1.5,-2);
	\draw (-6+0.5,-2)--(-6+1,-1);
	\draw(-6+0,0)--(-6+0,0.5);
	\draw[dashed] (-6-1,-0.8)--(-6-1.5,-3)--(-6-0.7,-3)--(-6-1,-0.8);
	\draw[dashed] (-6+0.5,-1.8)--(-6+0,-3)--(-6+0.9,-3)--(-6+0.5,-1.8);
	\draw[dashed] (-6+1.5,-1.8)--(-6+1,-3)--(-6+2,-3)--(-6+1.5,-1.8);
	\draw[dashed] (-6+1,-0.8)--(-6-0.3,-3.2)--(-6+2.3,-3.2)--(-6+1,-0.8);
	\node[fill=black,label=right:{$v'$},circle,inner sep=1pt] () at (-6+0,0) {};
	\node[fill=black,circle,inner sep=1pt] () at (-6-1,-1) {};
	\node[fill=black,label=right:{$v$},circle,inner sep=1pt] () at (-6+1,-1) {};
	\node[fill=black,circle,inner sep=1pt] () at (-6+1.5,-2) {};
	\node[fill=black,circle,inner sep=1pt] () at (-6+0.5,-2) {};
	\node[fill=white,label=below:{$H$},circle,inner sep=0pt] () at (-6-1.1,-2.4) {};
	\node[fill=white,label=below:{$A$},circle,inner sep=0pt] () at (-6+0.4,-2.4) {};
	\node[fill=white,label=below:{$B$},circle,inner sep=0pt] () at (-6+1.4,-2.4) {};
	\node[fill=white,label=below:{$E$},circle,inner sep=0pt] () at (-6+0.9,-1.6) {};
	\draw[dashed] (-3,0.35) ellipse (4.5cm and 1cm);
	\end{tikzpicture}
\caption{Further decomposition of $T$ in the proof of Lemma~\ref{Lem:GenComp}.}\label{fig:GenComp}
\end{figure}
If $w'$ is the parent of $v'$, then we are immediately done by applying part (ii) of Lemma~\ref{Lem:Form2} to the tree rooted at $w'$. Otherwise, we can apply the induction hypothesis to $v'$ and $w'$ (see Figure~\ref{fig:GenComp} for an illustration), which shows that
$$
\max\{|H|,|E|\}\leq\min\{|K|,|F|\},
$$
and thus $|E|\leq |F|$. If $|A|=|B|=|C|=|D|$, then we are done. Otherwise, we must have
$$
\min\{|A|,|B|\}<\max\{|C|,|D|\},
$$
since
$$|A| + |B| + 1 = |E| \leq |F| = |C|+|D| + 1.$$
Without loss of generality, we can assume that $|A|<|D|$. But then, by part (i), we must have
$$
\max\{|A|,|B|\}\leq\min\{|C|,|D|\}
$$
as desired.

If $\dd(\rr(T),v) > \dd(\rr(T),w) + 1$, we can argue as follows: let $v'$ be the ancestor of $v$ in $T$ for which $\dd(\rr(T),v') = \dd(\rr(T),w) + 1$. The branch rooted at $v'$ is minimal, so by the second part of Lemma~\ref{Lem:g8} it has exactly two children unless its order is less than $8$. If $v'$ has precisely two children, then we know that the desired inequality holds for the branches rooted at the children of $v'$ and $w$. But since $A$ and $B$ are contained in one of the branches rooted at a child of $v'$, it also holds for them. If the branch rooted at $v'$ has fewer than eight vertices, then (as can be seen from Table~\ref{Tab:Min_Small}) we must have $|A| = |B| = 1$, and the inequality becomes trivial.
\end{itemize}
\end{proof}

We already know from Lemma~\ref{Lem:g8} that the root degree of a minimal tree with at least eight vertices is necessarily $2$. The following lemma strengthens this further.
\begin{lem}
\label{Lem:Form0}
If $T$ is a minimal tree with $|T|\geq 18$, then both the root and its children have exactly two children.
\end{lem}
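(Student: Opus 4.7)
The plan is to show in turn that each of the two root branches of $T$ is itself a minimal tree whose root has exactly two children. First, Lemma~\ref{Lem:g8}(ii) applied to $T$ gives that the root of $T$ has exactly two children, so one may write $T=[T_1,T_2]$ with $|T_1|\geq |T_2|$, and by Lemma~\ref{lem:branches} both $T_1$ and $T_2$ are themselves minimal trees. Since $|T_1|\geq (|T|-1)/2\geq 17/2$, one has $|T_1|\geq 9\geq 8$, so a second application of Lemma~\ref{Lem:g8}(ii), this time to $T_1$, shows that the root of $T_1$ also has exactly two children; we may therefore write $T_1=[A,B]$.

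For $T_2$, the strategy is to feed the decomposition $T=[[A,B],T_2]$ into Lemma~\ref{Lem:Form2}(i). That lemma yields $\max\{|A|,|B|\}\leq |T_2|$, and hence $|T_1|=1+|A|+|B|\leq 1+2|T_2|$, which gives
$$
|T_2| \;\geq\; \frac{|T|-2}{3} \;\geq\; \frac{16}{3},
$$
so $|T_2|\geq 6$. If in fact $|T_2|\geq 8$, then a third application of Lemma~\ref{Lem:g8}(ii) — now to the minimal tree $T_2$ — shows that its root has exactly two children, and the proof is complete. Otherwise $|T_2|\in\{6,7\}$, and the same bound above forces $|T|\leq 2+3|T_2|\leq 23$, leaving only the finitely many orders $|T|\in\{18,19,20,21,22,23\}$ unresolved.

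The main obstacle is closing the gap for these six residual orders: Lemma~\ref{Lem:Form2}(i) is simply not sharp enough to exclude $|T_2|\in\{6,7\}$ in this range, because the bound $|T|\leq 23$ is compatible with the hypothesis $|T|\geq 18$. The cleanest way to finish is to appeal to Table~\ref{Tab:Min_Small}, whose entries for each $n\in\{18,\dots,23\}$ have the form $[T_1,T_2]$ with $|T_2|\in\{8,9\}$, so the cases $|T_2|\in\{6,7\}$ cannot actually occur; the completeness of the table follows from Lemma~\ref{lem:branches} together with the computer enumeration. Should a self-contained argument be preferred, one can instead compare $\I([M_n,M_6])=38\,\J(M_n)-3$ (and, when relevant, $\I([M_n,[M_3,\bullet,\bullet]])$, which has the same value) and $\I([M_n,M_7^i])=63\,\J(M_n)-3$ against the tabulated minimum $m_{n+7}$ or $m_{n+8}$ for each of the six values of $n$ in play, confirming in each case that the putative candidate exceeds the true minimum.
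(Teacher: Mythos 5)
Your proposal is correct and follows essentially the same route as the paper: Lemma~\ref{Lem:g8}(ii) for the root and the larger branch, Lemma~\ref{Lem:Form2}(i) to force $|T_2|\geq 8$ once $|T|\geq 24$, and a check against Table~\ref{Tab:Min_Small} for the residual orders $18\leq |T|\leq 23$. The paper handles those small orders by the same direct table verification, so there is no substantive difference.
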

\begin{proof}
For $18 \leq |T| < 24$, this can be checked directly, see Table~\ref{Tab:Min_Small}. So assume that $|T| \geq 24$. Clearly, at least one of the root branches of $T$ has eight or more vertices, so its root has degree~$2$ by Lemma~\ref{Lem:g8}. So $T$ is of the form $[[A,B],C]$, as in Figure~\ref{fig:ABC_decomp},
where $\max\{|A|,|B|\}\leq |C|$ in view of Lemma~\ref{Lem:Form2}. If $|C| \leq 7$, then
$$|T| = |A|+|B|+|C| + 2 \leq 3|C| + 2 \leq 23,$$
which is a contradiction. So $|C|$ has at least eight vertices, which means that its root degree is $2$ by Lemma~\ref{Lem:g8}. This concludes the proof.
\end{proof}

\subsection{Heights}

The height $\h(T)$ of a rooted tree $T$ is the length of the longest path from the root to a leaf. In our next lemma, we show a relation between the heights and the orders of branches. It will be useful for the formulation of this lemma, and also in the following, to use the following convention: a minimal tree is said to have \emph{standard form} if every branch of order $7$ has the shape $[[\bullet,\bullet],[\bullet,\bullet]]$. 

\begin{lem}
\label{Lem:Branch_height}
Let $A$ and $B$ be branches of a minimal tree $T$ in standard form. If $|A|\leq |B|$, then $\h(A)\leq \h(B)$.
\end{lem}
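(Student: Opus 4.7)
The plan is to proceed by strong induction on $|B|$. For the base case $|B|\leq 7$, the claim follows by direct inspection: using Lemma~\ref{lem:branches} and the definition of standard form, the possible branches of order at most $7$ have heights $0,1,1,1,1,2,2$ for orders $1$ through $7$, which is non-decreasing.

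For the inductive step, assume $|B|\geq 8$. I first dispose of the nested case: if $\rr(A)$ is a descendant of $\rr(B)$ in $T$, then $\h(B)\geq\dd(\rr(B),\rr(A))+\h(A)\geq\h(A)$, and the reverse nesting would violate $|A|\leq|B|$ unless $A=B$. So I may assume $A$ and $B$ are disjoint branches. By Lemma~\ref{Lem:g8}(ii) I write $B=[B_1,B_2]$ with $|B_1|\geq|B_2|$. If $|A|\leq|B_1|$, the induction hypothesis gives $\h(A)\leq\h(B_1)<\h(B)$. If $|A|>|B_1|$ but $|A|\leq 7$, then $\h(A)\leq 2$, while $|B|\geq 8$ forces $\h(B)\geq 2$ (since $|B_1|\geq 4$ implies $\h(B_1)\geq 1$). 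The substantive case is $|A|\geq 8$, where Lemma~\ref{Lem:g8}(ii) also yields $A=[A_1,A_2]$ with $|A_1|\geq|A_2|$. The whole argument then reduces to establishing $|A_1|\leq|B_1|$: once this holds, the induction hypothesis applied to $(A_1,B_1)$ and to $(A_2,B_1)$ gives $\h(A_i)\leq\h(B_1)$ for $i=1,2$, whence $\h(A)\leq 1+\h(B_1)\leq\h(B)$.

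To prove $|A_1|\leq|B_1|$, I would argue by contradiction using Lemma~\ref{Lem:GenComp}. Since $A$ and $B$ are disjoint, $T$ admits the decomposition $[[A_1,A_2]:_{\rr(A)}S_{\rr(B)}:[B_1,B_2]]$, where $S$ is obtained from $T$ by deleting all proper descendants of $\rr(A)$ and $\rr(B)$. Assuming $|A_1|>|B_1|$, split into three cases by the relative depths of $\rr(A)$ and $\rr(B)$ in $T$. If $\rr(A)$ is strictly deeper, Lemma~\ref{Lem:GenComp}(ii) yields $\max(|A_1|,|A_2|)\leq\min(|B_1|,|B_2|)$, contradicting $|A_1|>|B_1|$. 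If $\rr(B)$ is strictly deeper, the symmetric version gives $|B_1|\leq|A_2|$; combining this with $|A_1|\geq|A_2|$ and $|A_1|+|A_2|\leq|B_1|+|B_2|\leq 2|B_1|\leq 2|A_2|$ forces the chain to collapse into equalities, so $|A_1|=|A_2|=|B_1|=|B_2|$, again contradicting $|A_1|>|B_1|$. The equal-depth case is handled by Lemma~\ref{Lem:GenComp}(i), whose two alternatives reduce to one of the two scenarios just treated.

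The main obstacle is this final deduction that $|A_1|\leq|B_1|$. Everything else is a routine induction, but the reverse-depth case requires carefully combining $|A|\leq|B|$ with the size bound from Lemma~\ref{Lem:GenComp} to collapse a chain of inequalities, since the lemma alone only gives a statement consistent with $|A_1|>|B_1|$ in that situation.
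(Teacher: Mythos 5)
Your proof is correct and follows essentially the same route as the paper: induction on $|B|$, reduction to disjoint branches $[A_1,A_2]$ and $[B_1,B_2]$ via Lemma~\ref{Lem:g8}, and an application of Lemma~\ref{Lem:GenComp} in which the unwanted alternative is collapsed to all-equal sub-branch sizes using $|A|\leq|B|$. The paper does this collapse in one stroke from part (i) of Lemma~\ref{Lem:GenComp} and concludes with the stronger inequality $\max\{|A_1|,|A_2|\}\leq\min\{|B_1|,|B_2|\}$, whereas you split into cases by depth and only extract $|A_1|\leq|B_1|$, but the substance is the same.
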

\begin{proof}
Recall that $A$ and $B$ are necessarily minimal trees themselves. We use induction on $|B|$ to prove the first part. If $|A| \leq |B| \leq 8$, we can simply check all possible cases for $A$ and $B$, see Table~\ref{Tab:Min_Small}. 
Now assume that the statement holds whenever $|B|\leq k$ for some $k\geq 8$, and consider the case where $|B|=k+1$. 
In view of Lemma~\ref{Lem:g8}, we know that $\h(B)\geq 2$ since $B$ cannot be a star. So if $|A|\leq 7$, then $\h(A)\leq 2\leq \h(B)$, and we are done. If $|A|\geq 8$, then both $A$ and $B$ have root degree $2$, and $T$ has the form shown in Figure~\ref{Fig:A1A2} (unless $A$ is contained in $B$, in which case the statement becomes trivial).
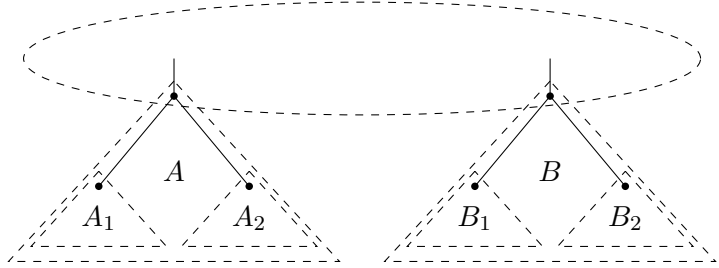
\begin{figure}[htbp]
\centering
\begin{tikzpicture}[scale=1]
    \draw (0,0)--(0,0.5);
	\draw (-1,-1.2)--(0,0)--(1,-1.2);
	\draw[dashed] (-1,-1)--(-1.9,-2)--(-0.1,-2)--(-1,-1);
	\draw[dashed] (1,-1)--(1.9,-2)--(0.1,-2)--(1,-1);
    	\draw[dashed] (0,0.2)--(-2.2,-2.2)--(2.2,-2.2)--(0,0.2);
    \node[fill=white,label=below:{$A$},circle,inner sep=0pt] () at (0,-0.7) {};
    \node[fill=black,circle,inner sep=1pt] () at (0,0) {}; 
    \node[fill=black,circle,inner sep=1pt] () at (-1,-1.2) {};
    \node[fill=black,circle,inner sep=1pt] () at (1,-1.2) {};
    \node[fill=white,label=below:{$A_1$},circle,inner sep=0pt] () at (-1,-1.3) {};
    \node[fill=white,label=below:{$A_2$},circle,inner sep=0pt] () at (1,-1.3) {};
    \draw (5+0,0)--(5+0,0.5);
	\draw (5-1,-1.2)--(5+0,0)--(5+1,-1.2);
	\draw[dashed] (5-1,-1)--(5-1.9,-2)--(5-0.1,-2)--(5-1,-1);
	\draw[dashed] (5+1,-1)--(5+1.9,-2)--(5+0.1,-2)--(5+1,-1);
	\draw[dashed] (5,0.2)--(5-2.2,-2.2)--(5+2.2,-2.2)--(5,0.2);
    \node[fill=black,circle,inner sep=1pt] () at (5+0,0) {}; 
    \node[fill=black,circle,inner sep=1pt] () at (5-1,-1.2) {};
    \node[fill=black,circle,inner sep=1pt] () at (5+1,-1.2) {};
    \node[fill=white,label=below:{$B$},circle,inner sep=0pt] () at (5,-0.7) {};
    \node[fill=white,label=below:{$B_1$},circle,inner sep=0pt] () at (5-1,-1.3) {};
    \node[fill=white,label=below:{$B_2$},circle,inner sep=0pt] () at (5+1,-1.3) {};
    \draw[dashed] (2.5,0.5) ellipse (4.5cm and 0.75cm);
    \end{tikzpicture} 
\caption{Proof of Lemma~\ref{Lem:Branch_height}.}
\label{Fig:A1A2}
\end{figure}

By Lemma~\ref{Lem:GenComp}, we have 
$$
\max\{|A_1|,|A_2|\} \leq \min\{|B_1|,|B_2|\},
$$
since the other inequality $\max\{|B_1|,|B_2|\}\leq \min\{|A_1|,|A_2|\}$ can be ruled out as $|A| \leq |B|$ (unless $|A_1|  = |A_2| = |B_1| = |B_2|$, in which case both inequalities hold). By the induction hypothesis we have
$$
\max\{\h(A_1),\h(A_2)\}  \leq \min\{\h(B_1),\h(B_2)\},
$$
which in turn implies that
$$\h(A) = 1 + \max \{\h(A_1),\h(A_2)\} \leq 1 + \min\{\h(B_1),\h(B_2)\} \leq 1 + \max\{\h(B_1),\h(B_2)\} = \h(B).$$
This completes the proof.
\end{proof}

\subsection{Excluded branches}

By Lemma~\ref{Lem:GenComp}, it is clear that a minimal tree cannot contain two copies of a branch $H=[A,B]$ where $|A|>|B|$. The following two lemmas determine further forbidden occurrences of branches in minimal trees.

\begin{lem}
\label{Lem:Forb_Br}
Let $H$ be a branch of a minimal tree $T$. We have
\begin{itemize}
\item[(i)] $|H| \neq 6$ unless $H = T$, and
\item[(ii)] if $|H|=3$, then either $H = T$ or $H$ is part of a branch of order $7$ or $8$.
\end{itemize}
\end{lem}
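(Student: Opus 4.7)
My approach for both parts is to argue by contradiction, with the contradiction coming from the fact that the parent branch of $H$ (which must itself be minimal by Lemma~\ref{lem:branches}) can be improved upon.

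Let $H$ be a branch of the minimal tree $T$ violating the claim, and let $H'$ be the branch rooted at the parent of $\rr(H)$. Then $H'$ is minimal as well, and $H$ itself is minimal, so $H \in \{[M_4,\bullet],\,[M_3,\bullet,\bullet]\}$ with $\J(H)=38$ in case~(i), and $H = [\bullet,\bullet]$ with $\J(H)=8$ in case~(ii). Checking against Table~\ref{Tab:Min_Small} rules out most small values of $|H'|$: in~(i), no branch of either minimal tree of order $7$ has order $6$, so $|H'| \geq 8$; in~(ii), $M_4$ and $M_5$ are stars (no $M_3$-subbranch), $|H'| \in \{7,8\}$ is precisely the desired conclusion, and $|H'|=6$ reduces by part~(i) to the corner case $H' = T$. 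Hence either $|H'| \geq 8$ (in~(i)) or $|H'| \geq 9$ (in the remaining case of~(ii)), and Lemma~\ref{Lem:g8}(ii) applied to the minimal $H'$ yields $H' = [H,D]$ for some minimal branch $D$.

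To contradict the minimality of $H'$ (and thus of $T$, since swapping $H'$ for a lower-$\I$ tree of equal order strictly decreases $\I(T)$), I would exhibit an alternative $\widetilde{H'}$ of order $|H'|$ with smaller $\J$. For part~(i) the candidate is $\widetilde{H'} = [M_5, M_{|D|+1}]$; by~\eqref{eq:Jrec} the two have the same order, and
\[
\J(H') - \J(\widetilde{H'}) \;=\; 38\,\J(M_{|D|}) - 22\,\J(M_{|D|+1}),
\]
strictly positive exactly when $\J(M_{|D|+1})/\J(M_{|D|}) < 19/11$. For part~(ii), the analogous candidate is $\widetilde{H'} = [M_4, M_{|D|-1}]$ (valid because $|D| = |H'|-4 \geq 5$), giving
\[
\J(H') - \J(\widetilde{H'}) \;=\; 8\,\J(M_{|D|}) - 13\,\J(M_{|D|-1}),
\]
strictly positive exactly when $\J(M_{|D|})/\J(M_{|D|-1}) > 13/8$.

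The main obstacle is verifying these two growth inequalities on $\J(M_n)$. For $n \leq 20$ they are routine numerical checks against Tables~\ref{Tab:Min_Small}--\ref{Tab:Min_Values}; for larger $n$ I would argue inductively, using the optimal root decomposition $M_n = [M_a,M_b]$ from Lemma~\ref{Lem:g8}(ii) together with the size-balance restrictions of Lemmas~\ref{Lem:Form2}, \ref{Lem:GenComp}, and \ref{Lem:Form0} to compare consecutive terms. Two subcases need separate attention. First, in~(i) the ratio hits $19/11$ exactly at $|D|=5$, where the exchange gives only equality; here I would instead invoke $M_{12} = [M_7,M_4]$ from Table~\ref{Tab:Min_Small} and conclude directly from $\J(H') = 38 \cdot 22 - 1 = 835 > 818 = \J(M_{12})$. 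Second, in~(i) when $H = [M_3,\bullet,\bullet]$, the vertex $\rr(H)$ has three children, so Lemma~\ref{Lem:Form0} forces $|H'| \leq 17$ and this subcase reduces to finitely many tabulated comparisons of $38\,\J(M_{|D|}) - 1$ against $\J(M_{|H'|})$, all of which strictly favour $M_{|H'|}$.
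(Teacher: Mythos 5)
Your overall strategy --- locate the offending branch $H$, pass to its parent branch $H'=[H,D]$, and beat $H'$ by a same-order replacement with smaller $\J$ --- is a legitimate but genuinely different route from the paper's. The paper instead runs a global induction on $|T|$: after the table check for $|T|\le 20$, it writes $T=[[A_1,A_2],[B_1,B_2]]$ with $|A|\ge|B|$, notes that Lemma~\ref{Lem:Form2} forces $\max\{|A_1|,|A_2|\}\le|B|$ and hence $21\le|T|\le 3|B|+2$, so $|A|\ge|B|\ge 7$; any branch of order $6$ or $3$ is then a \emph{proper} sub-branch of $A$ or $B$ and the induction hypothesis finishes both parts at once, with no new computation.

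The concrete gap in your version is the pair of growth inequalities $\J(M_{d+1})/\J(M_d)<19/11$ and $\J(M_d)/\J(M_{d-1})>13/8$, which you defer to an induction ``for larger $n$'' that you do not carry out. As stated this is a real hole: at this stage of the paper nothing controls the ratio $\J(M_{n+1})/\J(M_n)$ uniformly (the eventual bounds of Theorem~\ref{thm:asy} --- which in any case come after, and depend on, this lemma --- only confine it to roughly $[\alpha/1.05,\,1.05\,\alpha]\approx[1.59,1.75]$, an interval containing both thresholds), and a naive induction on the ratio via $M_n=[M_a,M_b]$ does not close because of the $-1$ in $\J([B_1,B_2])=\J(B_1)\J(B_2)-1$. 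Fortunately the unbounded range is illusory: since $H'=[H,D]$ is minimal, if $|D|\ge 8$ then $D=[D_1,D_2]$ by Lemma~\ref{Lem:g8}, and Lemma~\ref{Lem:Form2}(i) applied to $H'=[[D_1,D_2],H]$ gives $\max\{|D_1|,|D_2|\}\le|H|$, hence $|D|\le 2|H|+1$, i.e.\ $|D|\le 13$ in case~(i) and $|D|\le 7$ in case~(ii). That reduces both inequalities to finitely many values already listed in Table~\ref{Tab:Min_Values}, where they do hold (apart from your correctly flagged equality at $|D|=5$ in case~(i)). With that repair your argument goes through; one remaining caveat, which your ``corner case $H'=T$'' remark touches but does not resolve, is that $T=[M_3,\bullet,\bullet]$ of order $6$ really does contain a $3$-vertex branch lying in no branch of order $7$ or $8$, an edge case that the paper's literal statement of part~(ii) also overlooks.
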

\begin{proof}
We prove both statements by induction. For $|T|\leq 20$, we simply check the list of minimal trees in Table~\ref{Tab:Min_Small}. Assume now that every minimal tree of order at most $k$, for some $k\geq 20$, does not contain a branch of order $6$ as a proper subtree, and that each of the branches of order $3$ is either the whole tree or part of a branch of order $7$ or $8$.  Now consider a minimal tree $T$ of order $k+1 \geq 21$. By Lemma~\ref{Lem:Form0}, it must have the form $T = [A,B] = [[A_1,A_2],[B_1,B_2]]$ shown in Figure~\ref{fig:A1A2B1B2}.
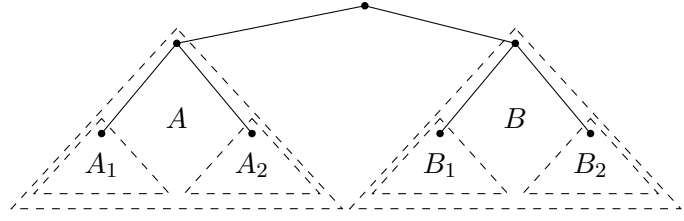
\begin{figure}[htbp]
\centering
\begin{tikzpicture}[scale=1]
    \draw (0,0)--(2.5,0.5);
	\draw (-1,-1.2)--(0,0)--(1,-1.2);
	\draw[dashed] (-1,-1)--(-1.9,-2)--(-0.1,-2)--(-1,-1);
	\draw[dashed] (1,-1)--(1.9,-2)--(0.1,-2)--(1,-1);
    	\draw[dashed] (0,0.2)--(-2.2,-2.2)--(2.2,-2.2)--(0,0.2);
    \node[fill=white,label=below:{$A$},circle,inner sep=0pt] () at (0,-0.7) {};
    \node[fill=black,circle,inner sep=1pt] () at (2.5,0.5) {}; 
    \node[fill=black,circle,inner sep=1pt] () at (0,0) {}; 
    \node[fill=black,circle,inner sep=1pt] () at (-1,-1.2) {};
    \node[fill=black,circle,inner sep=1pt] () at (1,-1.2) {};
    \node[fill=white,label=below:{$A_1$},circle,inner sep=0pt] () at (-1,-1.3) {};
    \node[fill=white,label=below:{$A_2$},circle,inner sep=0pt] () at (1,-1.3) {};
      \draw (4.5+0,0)--(2.5,0.5);
	\draw (4.5-1,-1.2)--(4.5+0,0)--(4.5+1,-1.2);
	\draw[dashed] (4.5-1,-1)--(4.5-1.9,-2)--(4.5-0.1,-2)--(4.5-1,-1);
	\draw[dashed] (4.5+1,-1)--(4.5+1.9,-2)--(4.5+0.1,-2)--(4.5+1,-1);
	\draw[dashed] (4.5,0.2)--(4.5-2.2,-2.2)--(4.5+2.2,-2.2)--(4.5,0.2);
    \node[fill=black,circle,inner sep=1pt] () at (4.5+0,0) {}; 
    \node[fill=black,circle,inner sep=1pt] () at (4.5-1,-1.2) {};
    \node[fill=black,circle,inner sep=1pt] () at (4.5+1,-1.2) {};
    \node[fill=white,label=below:{$B$},circle,inner sep=0pt] () at (4.5,-0.7) {};
    \node[fill=white,label=below:{$B_1$},circle,inner sep=0pt] () at (4.5-1,-1.3) {};
    \node[fill=white,label=below:{$B_2$},circle,inner sep=0pt] () at (4.5+1,-1.3) {};
    \end{tikzpicture}
\caption{Decomposition $T = [A,B] = [[A_1,A_2],[B_1,B_2]]$.}\label{fig:A1A2B1B2}
\end{figure}
Without loss of generality, we can assume $|A|\geq |B|$, and by Lemma~\ref{Lem:Form2} we know that $|B| \geq \max\{|A_1|,|A_2|\}$.
Now it is impossible that $|B|\leq 6$, since this would imply
$$|T| = |A_1|+|A_2|+|B|+2 \leq 3|B|+2 \leq 20.$$
Thus $|A| \geq |B| \geq 7$. So if $T$ has a $6$-vertex branch, then this branch has to lie inside $A$ or $B$, but is not equal to either of the two. This contradicts the induction hypothesis. Likewise, if $T$ has a branch $H$ of order $3$, then it has to be a branch in $A$ or $B$. So again, by the induction hypothesis, it has to be part of a branch with $7$ or $8$ vertices.
\end{proof}
\begin{lem}
\label{Lem:Forb_Br_1}
Let $T$ be a minimal tree, and let $A$ and $B$ be two disjoint branches, so that $T$ can be represented as $T=[A:_v\hspace{-0.14cm}S_w\hspace{-0.14cm}:B]$. Assume without loss of generality that $|A| \geq |B|$. Then the pair $(|A|,|B|)$ is none of the following:
$$(7,7),\ (8,8),\ (11,11),\ (7,5),\ (8,5),\ (8,7),\ (11,10),\ (16,10),\ (15,11),\ (16,11).$$

\end{lem}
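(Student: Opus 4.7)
The plan is to rule out each of the ten forbidden pairs by splitting into the \emph{sibling} case (when $A$ and $B$ share a common parent in $T$) and the \emph{non-sibling} case. In the sibling case, Lemma~\ref{lem:branches} forces the branch $[A,B]$ of order $|A|+|B|+1$ to be itself a minimal tree; consulting Table~\ref{Tab:Min_Small}, one checks that for each of the ten pairs the root branches of the corresponding minimal tree have orders different from $\{|A|,|B|\}$. For instance, for $(11,10)$ the minimal tree of order $22$ is $[M_{12}^i,M_9]$, whose root branches have orders $12$ and $9$, not $11$ and $10$.

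For the non-sibling case, Lemma~\ref{Lem:S_A}(ii) gives
\[
\I(T) = p\,\I(A)\I(B) + q\,\I(A) + r\,\I(B) + s,
\]
with $p = \I_{vw}(S) \geq 1$, $0 \leq q,r \leq 2p$ (by Lemma~\ref{Lem:I1_I0}(iii)), and $s = \I(S-\{v,w\})$. Replacing $A$ by the minimal tree $A'$ of order $|A'|$ attached at $v$ and $B$ by the minimal $B'$ of order $|B'|$ attached at $w$, with $|A'|+|B'| = |A|+|B|$, yields
\[
\I(T')-\I(T) = p\bigl(\I(A')\I(B')-\I(A)\I(B)\bigr) + q\bigl(\I(A')-\I(A)\bigr) + r\bigl(\I(B')-\I(B)\bigr),
\]
whose maximum over $q,r \in [0,2p]$ must be strictly negative for a contradiction with $T$ being minimal. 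For eight of the pairs---$(7,7)$, $(7,5)$, $(8,5)$, $(8,7)$, $(11,11)$, $(15,11)$, $(16,10)$, $(16,11)$---the choices $(|A'|,|B'|) = (9,5), (8,4), (9,4), (9,6), (13,9), (17,9), (17,9), (17,10)$ respectively do the job, as a short arithmetic check using Table~\ref{Tab:Min_Values} confirms. Note that $(9,6)$ involves the tree $M_6$, which Lemma~\ref{Lem:Forb_Br}(i) excludes from minimal trees---but we only need $\I(T')<\I(T)$, not that $T'$ itself be minimal.

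The two remaining pairs need structural arguments. For $(8,8)$, both branches are forced by Table~\ref{Tab:Min_Small} to be $[M_4,M_3]$, so applying Lemma~\ref{Lem:GenComp}(i) to them gives either $\max\{4,3\} \leq \min\{4,3\}$ or the symmetric statement; both reduce to $4 \leq 3$, a contradiction. For $(11,10)$ the decompositions are $A = [M_5,M_5]$ and $B = [M_5,M_4]$, and Lemma~\ref{Lem:GenComp}(i) only yields the non-vacuous alternative $5 \leq 5$, while Lemma~\ref{Lem:GenComp}(ii) further rules out $v$ being strictly deeper than $w$. From there I would inspect the branch of $T$ rooted at the common ancestor $u$ of $v$ and $w$, apply Lemma~\ref{Lem:Form2} iteratively along the ancestor chain, and compare the resulting structure against Table~\ref{Tab:Min_Small} to conclude that no consistent minimal configuration exists.

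The main obstacle is exactly the pair $(11,10)$: a direct exchange falls just short of working, since the closest replacement $(12,9)$ yields $\I(T')-\I(T) \leq 3p$, not strictly negative. Overcoming this requires combining the depth restriction from Lemma~\ref{Lem:GenComp}(ii) with a careful finite inspection of the possible positions of $A$ and $B$ in $T$---specifically of the sub-tree rooted at their common ancestor---and this last structural inspection is the technically most delicate step of the proof.
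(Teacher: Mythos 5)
Your replacement framework for the non-sibling case is exactly the paper's: express $\I(T)$ via Lemma~\ref{Lem:S_A}(ii), substitute minimal branches $A'$, $B'$ of the same total order, and bound the difference using $\I_{vw}(S)\geq 1$ and $3\I_{vw}(S)\geq \I_v(S),\I_w(S)$ from Lemma~\ref{Lem:I1_I0}. For the eight pairs you treat this way the arithmetic does close (including your variant $(17,10)$ for $(16,11)$, where the paper uses $(18,9)$), and your structural disposal of $(8,8)$ via Lemma~\ref{Lem:GenComp}(i) is correct, since $M_8=[M_4,M_3]$ is the unique minimal tree of order $8$; this is in effect the remark the paper makes immediately before the lemma. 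The sibling/non-sibling split is harmless but unnecessary: the identity of Lemma~\ref{Lem:S_A}(ii) applies whether or not $v$ and $w$ are siblings, so the paper runs one uniform argument.

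The genuine gap is the pair $(11,10)$. What you offer there is a plan (``inspect the branch rooted at the common ancestor, apply Lemma~\ref{Lem:Form2} iteratively, compare against the table''), not an argument, and it is far from clear it can be completed: applied to $M_{11}=[M_5,M_5]$ and $M_{10}=[M_5,M_4]$, Lemma~\ref{Lem:GenComp} only yields the harmless inequality $5\leq 5$ in one direction, and part (ii) still permits $v$ and $w$ at equal depth or $w$ deeper. The ingredient you are missing is a normalisation the paper uses: when $\I(A)>\I(B)$ one may assume $\I_v(S)\leq \I_w(S)$, because otherwise swapping $A$ and $B$ gives a tree $\bar T$ with $\I(T)-\I(\bar T)=(\I_v(S)-\I_w(S))(\I(A)-\I(B))>0$, contradicting minimality. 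With this extra inequality the replacement $(12,9)$ that you discarded does work: $\I(T)-\I(T')=885\I_{vw}(S)-335\I_v(S)+117\I_w(S)\geq 885\I_{vw}(S)-218\I_v(S)\geq 295\I_v(S)-218\I_v(S)>0$. (The same normalisation is how the paper also settles $(8,8)$ by replacement with $(9,7)$, where your crude bound $q,r\in[0,2p]$ likewise falls short.) Incorporating this swap argument turns your eight-case computation into a complete proof of all ten cases and removes the need for the unexecuted structural step.
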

\begin{proof}
For each of the pairs that are listed, we provide a transformation that yields a new tree $T'$ with fewer \icss{}, which is a contradiction. 
Let us show this in the case $|A|=|B|=7$ first.
The new tree $T'$ is obtained by the transformation shown in Figure~\ref{fig:Forb_Br_trans} (note that it is irrelevant for the number of \icss{} which of the two possible shapes a minimal branch of order $7$ has). In words, $A$ is replaced by a minimal tree of order $9$, $B$ is replaced by a minimal tree of order $5$. 
\begin{figure}[htbp]
\centering
\begin{tikzpicture}[scale=0.9]
    \draw[dashed] (0,0.65) ellipse (2.5cm and 1cm);
    \draw (-1.5,0.5)--(-1.5,0)--(-0.75,-0.5)--(-0.25,-1);
    \draw (-1.5,0)--(-2.25,-0.5)--(-2.75,-1);
    \draw (-0.75,-0.5)--(-1.25,-1);
    \draw (-2.25,-0.5)--(-1.75,-1);
    \node[fill=black,label=right:{$v$},circle,inner sep=1pt] () at (-1.5,0) {};
    \node[fill=black,circle,inner sep=1pt] () at (-0.75,-0.5) {};
    \node[fill=black,circle,inner sep=1pt] () at (-0.25,-1) {};
    \node[fill=black,circle,inner sep=1pt] () at (-1.25,-1) {};
    \node[fill=black,circle,inner sep=1pt] () at (-2.25,-0.5) {};
    \node[fill=black,circle,inner sep=1pt] () at (-1.75,-1) {};
    \node[fill=black,circle,inner sep=1pt] () at (-2.75,-1) {};
    \draw (3-1.5,0.5)--(3-1.5,0)--(3-0.75,-0.5)--(3-0.25,-1);
    \draw (3-1.5,0)--(3-2.25,-0.5)--(3-2.75,-1);
    \draw (3-0.75,-0.5)--(3-1.25,-1);
    \draw (3-2.25,-0.5)--(3-1.75,-1);
    \node[fill=black,label=left:{$w$},circle,inner sep=1pt] () at (3-1.5,0) {};
    \node at (0,1.2) {$S$};
    \node at (0,-1.5) {$T$};
    \node[fill=black,circle,inner sep=1pt] () at (3-0.75,-0.5) {};
    \node[fill=black,circle,inner sep=1pt] () at (3-0.25,-1) {};
    \node[fill=black,circle,inner sep=1pt] () at (3-1.25,-1) {};
    \node[fill=black,circle,inner sep=1pt] () at (3-2.25,-0.5) {};
    \node[fill=black,circle,inner sep=1pt] () at (3-1.75,-1) {};
    \node[fill=black,circle,inner sep=1pt] () at (3-2.75,-1) {};
    \draw[->, very thick] (3,0)--(4.5,0);
    \draw[dashed] (7.5+0,0.65) ellipse (2.5cm and 1cm);
    \draw (7.5-1.5,0.5)--(7.5-1.5,0)--(7.5-0.75,-0.5)--(7.5-0.25,-1);
    \draw (7.5-1.5,0)--(7.5-2.25,-0.5);
    \draw (7.5-1.25,-1)--(7.5-0.75,-0.5)--(7.5-0.75,-1);
    \draw (6-1.25,-1)--(6-0.75,-0.5)--(6-0.75,-1);
    \draw (6-0.25,-1)--(6-0.75,-0.5);
    \node[fill=black,label=right:{$v$},circle,inner sep=1pt] () at (7.5-1.5,0) {};
    \node[fill=black,circle,inner sep=1pt] () at (6-1.25,-1) {};
    \node[fill=black,circle,inner sep=1pt] () at (6-0.75,-1) {};
    \node[fill=black,circle,inner sep=1pt] () at (6-0.25,-1) {};
    \node[fill=black,circle,inner sep=1pt] () at (7.5-0.75,-0.5) {};
    \node[fill=black,circle,inner sep=1pt] () at (7.5-0.25,-1) {};
    \node[fill=black,circle,inner sep=1pt] () at (7.5-2.25,-0.5) {};
    \node[fill=black,circle,inner sep=1pt] () at (7.5-1.25,-1) {};
    \node[fill=black,circle,inner sep=1pt] () at (7.5-0.75,-1) {};
    \draw (7.5+3-1.5,0.5)--(7.5+3-1.5,0)--(7.5+3-0.75,-0.5);
    \draw (7.5+3-1.75,-0.5)--(7.5+3-1.5,0)--(7.5+3-2.25,-0.5);
    \draw (7.5+3-1.25,-0.5)--(7.5+3-1.5,0);
    \node[fill=black,label=left:{$w$},circle,inner sep=1pt] () at (7.5+3-1.5,0) {};
    \node at (7.5,1.2) {$S$};
    \node at (7.5,-1.5) {$T'$};
    \node[fill=black,circle,inner sep=1pt] () at (7.5+3-0.75,-0.5) {};
    \node[fill=black,circle,inner sep=1pt] () at (7.5+3-1.75,-0.5) {};
    \node[fill=black,circle,inner sep=1pt] () at (7.5+3-1.25,-0.5) {};
    \node[fill=black,circle,inner sep=1pt] () at (7.5+3-2.25,-0.5) {};
    \end{tikzpicture} 
\caption{Constructing a tree $T'$ with fewer \icss{}.}\label{fig:Forb_Br_trans}
\end{figure}
We use Lemma~\ref{Lem:S_A} to express $\I(T)$ and $\I(T')$:
\begin{align*}
\I(T) &=61^2\I_{vw}(S)+61(\I_{v}(S)-\I_{vw}(S))+61(\I_{w}(S)-\I_{vw}(S))+\I(S-\{v,w\}) \\
&= 3599\I_{vw}(S) + 61\I_{v}(S) + 61\I_{w}(S) +\I(S-\{v,w\})
\end{align*}
and
\begin{align*}
\I(T') &=166 \cdot 20\I_{vw}(S)+166(\I_{v}(S)-\I_{vw}(S))+20(\I_{w}(S)-\I_{vw}(S))+\I(S-\{v,w\}) \\
&= 3134\I_{vw}(S) + 166\I_{v}(S) + 20\I_{w}(S) +\I(S-\{v,w\}),
\end{align*}
so
$$
\I(T)-\I(T') =465\I_{vw}(S)-105\I_v(S)+41\I_w(S) >0.
$$
Here, we are using the relation $3\I_{vw}(S)\geq \I_v(S)$ from Lemma~\ref{Lem:I1_I0}, part (iii). This proves that $|A| = |B| = 7$ is in fact impossible.

We proceed in a similar way in all other cases. If $|A| = |B| = 8$, then we can assume without loss of generality (interchanging the roles of $A$ and $B$ if necessary) that $\I_v(S)\leq \I_w(S)$. The new tree $T'$ is constructed by replacing $A$ and $B$ by minimal trees of order $9$ and $7$ respectively. Here, we obtain
$$\I(T) - \I(T') = 100 \I_{vw}(S) - 65\I_v(S)+40 \I_w(S) = 100 \I_{vw}(S) - 25 \I_v(S) + 40(\I_w(S) - \I_v(S)) >0,$$
exploiting again the relation $3\I_{vw}(S)\geq \I_v(S)$.

The remaining cases are summarised in Table~\ref{table:Forb_Br_1}: in each case, $A$ and $B$ are replaced by minimal trees $A'$ and $B'$ of different orders to obtain a new tree $T'$. Note that we may assume in each case that $I_v(S) \leq I_w(S)$: if not, switching $A$ and $B$ yields a tree $\bar{T}$ with
$$\I(T) - \I(\bar{T}) = (\I_v(S) - \I_w(S))(\I(A)-\I(B)) > 0,$$
which is a contradiction. In the case $|A|=|B| = 11$, we can assume $I_v(S) \leq I_w(S)$ by symmetry. This and the inequality $3\I_{vw}(S) \geq \I_v(S)$ from Lemma~\ref{Lem:I1_I0} suffice to show that $\I(T) > \I(T')$ in each case.
\begin{table}[htbp]
\centering
\begin{tabular}{|c|c|c|c|l|}
\hline
$|A|$ & $|B|$ & $|A'|$ & $|B'|$ & $\I(T) - \I(T')$ \\
\hline
11 & 11 & 13 & 9 & $10125\I_{vw}(S) - 855\I_v(S) + 315\I_w(S)$ \\
7 & 5 & 8 & 4 & $140\I_{vw}(S) - 40\I_v(S) + 9\I_w(S)$ \\
8 & 5 & 9 & 4 & $250\I_{vw}(S) - 65\I_v(S) + 9\I_w(S)$ \\
8 & 7 & 9 & 6 & $225\I_{vw}(S) - 65\I_v(S) + 25\I_w(S)$ \\
11 & 10 & 12 & 9 & $885\I_{vw}(S) - 335\I_v(S) + 117\I_w(S)$ \\
16 & 10 & 17 & 9 & $21312\I_{vw}(S) - 4314\I_v(S) + 117\I_w(S)$ \\
15 & 11 & 17 & 9 & $26460\I_{vw}(S) - 6888\I_v(S) + 315\I_w(S)$ \\
16 & 11 & 18 & 9 & $153180 \I_{vw}(S) - 11034 \I_v(S) + 315 \I_w(S)$ \\
\hline
\end{tabular}

\medskip

\caption{Replacements in the proof of Lemma~\ref{Lem:Forb_Br_1}.}\label{table:Forb_Br_1}
\end{table}

This completes the proof of our lemma.
\end{proof}

Lemma~\ref{Lem:Forb_Br} and Lemma~\ref{Lem:Forb_Br_1} imply the following statement on leaves in minimal trees. Recall here that a minimal tree is said to have standard form if every branch of order $7$ has the shape $[[\bullet,\bullet],[\bullet,\bullet]]$. 

\begin{lem}\label{Lem:Br_3_5}
Let $T$ be a minimal tree of order $n \geq 7$ in standard form.
\begin{itemize}
\item[(i)] Every leaf of $T$ is contained in a branch of order $3$, $4$ or $5$.
\item[(ii)] Branches of order $3$ and $5$ do not occur simultaneously in $T$.
\item[(iii)] At most two branches of order $3$ occur in $T$. If there are exactly two, then they are part of a branch of order $7$.
\end{itemize}
\end{lem}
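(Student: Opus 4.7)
For part (i), my approach will be to fix an arbitrary leaf $v$, let $H$ be the branch rooted at $v$'s parent $p$, and rule out all values of $|H|$ outside $\{3,4,5\}$. First I observe that $|H|\geq 3$: either $p=\rr(T)$ and $|H|=|T|\geq 7$, or $p$ is a non-root vertex with $\deg(p)\geq 3$ by Lemma~\ref{Lem:g8}(i), so $p$ has at least two children. Lemma~\ref{Lem:Forb_Br}(i) rules out $|H|=6$. For $|H|=7$, the standard form forces $H=[M_3,M_3]$, whose root's children have order $3$, contradicting that $v$ is a leaf child of $p=\rr(H)$; for $|H|=8$, Table~\ref{Tab:Min_Small} forces $H=[M_4,M_3]$, which likewise has no leaf among the root's children. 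For $|H|\geq 9$, Lemma~\ref{Lem:g8}(ii) gives $H=[v,B]$ with $|B|=|H|-2\geq 7$; since every minimal tree of order at least $7$ has root degree $2$ (by the standard form when $|B|=7$, and by Lemma~\ref{Lem:g8}(ii) when $|B|\geq 8$), we have $B=[B_1,B_2]$. Rewriting $H=[[B_1,B_2],v]$ and applying Lemma~\ref{Lem:Form2}(i) yields $\max\{|B_1|,|B_2|\}\leq|v|=1$, whence $|B|=3$, contradicting $|B|\geq 7$.

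For part (ii), I will suppose for contradiction that both a branch $A$ of order $5$ and a branch $B$ of order $3$ occur in $T$. From Table~\ref{Tab:Min_Small}, $A=M_5=[\bullet,\bullet,\bullet,\bullet]$ is a star, so all its proper subbranches are singletons and $B\not\subseteq A$. By Lemma~\ref{Lem:Forb_Br}(ii), $B$ lies inside some branch $B'$ of order $7$ or $8$ (noting $B\neq T$ since $|B|<|T|$). Neither $[M_3,M_3]$ nor $[M_4,M_3]$ has any order-$5$ subbranch (the proper subbranch orders being $\{1,3\}$ and $\{1,3,4\}$ respectively), so $A\not\subseteq B'$; and $|B'|>|A|$ rules out $B'\subseteq A$. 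Hence $A$ and $B'$ are vertex-disjoint, so Lemma~\ref{Lem:Forb_Br_1} applied to the forbidden pair $(|B'|,5)\in\{(7,5),(8,5)\}$ yields a contradiction.

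For part (iii), the plan is first to show that $T$ has at most one branch of order in $\{7,8\}$: two vertex-disjoint such branches would realize one of the forbidden pairs $(7,7),(8,7),(8,8)$ of Lemma~\ref{Lem:Forb_Br_1}, while the only admissible nesting by size---an order-$7$ branch inside an order-$8$ branch---is impossible because $[M_4,M_3]$ has no order-$7$ subbranch. Since $|T|\geq 7$ forces every order-$3$ branch to be a proper subbranch of $T$, each lies inside the unique order-$7$ or order-$8$ container by Lemma~\ref{Lem:Forb_Br}(ii); the container $[M_3,M_3]$ contains exactly two order-$3$ subbranches, the container $[M_4,M_3]$ exactly one, and absence of a container gives zero. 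This yields the bound of two and the claimed structure when equality holds.

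The main obstacle will be the $|H|\geq 9$ case of part (i), where one has to apply Lemma~\ref{Lem:g8}(ii) twice to peel off two levels of root decomposition and then reorder the root children of $H$ so that Lemma~\ref{Lem:Form2}(i) applies with $v$ playing the role of the third branch; the rest of the proof is careful bookkeeping against the forbidden pairs of Lemma~\ref{Lem:Forb_Br_1} and the explicit shapes of small minimal trees in Table~\ref{Tab:Min_Small}.
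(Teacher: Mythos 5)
Your proof is correct. Parts (ii) and (iii) follow essentially the same route as the paper: locate every order-$3$ branch inside a containing branch of order $7$ or $8$ via Lemma~\ref{Lem:Forb_Br}(ii), then invoke the forbidden pairs of Lemma~\ref{Lem:Forb_Br_1}. You are somewhat more careful than the paper in first verifying, via the nested-or-disjoint dichotomy for branches, that the relevant pairs really are disjoint before Lemma~\ref{Lem:Forb_Br_1} is applied; that is a worthwhile addition rather than a deviation. Part (i) is where you genuinely diverge. The paper invokes Lemma~\ref{Lem:Form0} to cap the order of the branch $H$ rooted at the leaf's parent at $17$, and then reads off from Table~\ref{Tab:Min_Small} that no minimal tree of order $8$ through $17$ has a leaf child at its root. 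You instead eliminate all orders $|H|\geq 9$ structurally: two applications of Lemma~\ref{Lem:g8}(ii) (plus the standard-form convention when the inner branch has order $7$) put $H$ in the form $[[B_1,B_2],\bullet]$, and Lemma~\ref{Lem:Form2}(i) then forces $\max\{|B_1|,|B_2|\}\leq 1$, a contradiction. Your route avoids Lemma~\ref{Lem:Form0} and the table lookup for orders $9$--$17$ entirely (you still need the table for order $8$ and the standard form for order $7$), at the cost of a slightly longer explicit case analysis; the paper's version is shorter on the page but leans more heavily on the precomputed list of small minimal trees. Both arguments are sound.
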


\newpage 

\begin{proof}\
\begin{itemize}
\item[(i)] By Lemma~\ref{Lem:Form0}, the parent of a leaf is root of a branch of order at most $17$. Looking at Table~\ref{Tab:Min_Small}, we see that it is in fact root of a branch of order at most $7$. Since $T$ is assumed to have standard form, order $7$ is also excluded, as is order $6$ by Lemma~\ref{Lem:Forb_Br} and order $2$ by Lemma~\ref{Lem:g8}. Consequently, the order of the branch rooted at a leaf's parent is $3$, $4$ or $5$.
\item[(ii)] A branch of order $3$ can only occur as part of a branch of order $7$ or $8$ by Lemma~\ref{Lem:Forb_Br}. Since the pairs $(7,5)$ and $(8,5)$ are forbidden in Lemma~\ref{Lem:Forb_Br_1}, it can therefore not occur simultaneously with a branch of order $5$.
\item[(iii)] Again, we use the fact that every branch of order $3$ is part of a branch of order $7$ or $8$. Since the pairs $(7,7)$, $(8,8)$ and $(8,7)$ all belong to the forbidden pairs of Lemma~\ref{Lem:Forb_Br_1}, there can be only at most one branch of order $7$ or $8$, thus at most two of order $3$ (and if there are two, then they form part of the same branch of order $7$).
\end{itemize}
\end{proof}

The following lemma looks specifically at branches of order $5$.

\begin{lem}
\label{Lem:Place_of_5}
If a minimal tree $T$ of order $n\geq 6$ has a $5$-vertex branch, then it must be part of a branch of order $10$, $11$, $15$ or $16$. 
\end{lem}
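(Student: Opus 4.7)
My plan is to look directly at the branch rooted at the parent of the 5-vertex branch and show that its order must lie in $\{10,11,15,16\}$. Concretely, let $B$ be the 5-vertex branch of $T$ and let $v$ be its root. Since $|B| = 5$ and $n \geq 6$, $v$ cannot be the root of $T$, so I would let $w$ be its parent and $H$ the branch rooted at $w$; the goal is to establish that $|H| \in \{10,11,15,16\}$, which obviously implies the statement.

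The first, easy step is a lower bound. By Lemma~\ref{Lem:g8}(i), the non-root, non-leaf vertex $w$ must have at least two children, so $H$ contains $w$, the five vertices of $B$, and at least one further vertex. Hence $|H| \geq 7$.

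The main step, and the one that makes the lemma work, is the upper bound $|H| \leq 17$. I would argue by contradiction: suppose $|H| \geq 18$. Since every branch of a minimal tree is minimal (Lemma~\ref{lem:branches}), $H$ is itself a minimal tree of order at least $18$, and Lemma~\ref{Lem:Form0} applies to it. It yields that $w$ and each of $w$'s children in $H$ have exactly two children. In particular $v$, being a child of $w$, would have exactly two children. But $B$ is the subtree of $H$ rooted at $v$, so the children of $v$ in $H$ coincide with the children of $v$ in $B$, and $B = M_5 = [\bullet,\bullet,\bullet,\bullet]$ has its root of degree four. This contradiction forces $|H| \leq 17$.

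It then remains to do a finite check in the range $7 \leq |H| \leq 17$. Since $H$ is minimal, $H = M_{|H|}$ up to isomorphism, and $B$ is by construction a root branch of $H$. Reading off Table~\ref{Tab:Min_Small} one sees that the only values of $k$ in that range for which $M_k$ has a root branch isomorphic to $M_5$ are $k = 10$ (with $M_{10}=[M_5,M_4]$), $k=11$ (with $M_{11}=[M_5,M_5]$), $k=15$ (with $M_{15}=[M_9,M_5]$), and $k=16$ (with $M_{16}=[M_{10},M_5]$); in all other cases ($k = 7,8,9,12,13,14,17$) no root branch has order $5$. This pins $|H|$ down to $\{10,11,15,16\}$ and finishes the proof. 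I do not expect any serious obstacle beyond the key observation above: the root of $M_5$ has too high a degree to sit at depth one of a sufficiently large minimal branch, which is precisely what Lemma~\ref{Lem:Form0} forbids.
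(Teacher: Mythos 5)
Your proof is correct, and it takes a genuinely different and more economical route than the paper's. The paper argues by induction on $n$: it checks the claim directly from Table~\ref{Tab:Min_Small} for all $|T|\leq 50$, and for $|T|\geq 51$ uses Lemmas~\ref{Lem:Form0} and~\ref{Lem:Form2} to write $T=[[A_1,A_2],B]$ with $|B|\geq 17$, so that any $5$-vertex branch is a proper subtree of a smaller minimal branch to which the induction hypothesis applies. You instead argue locally at the parent $w$ of the $5$-branch's root: if the branch $H$ rooted at $w$ had $18$ or more vertices, Lemma~\ref{Lem:Form0} (applicable since $H$ is itself minimal by Lemma~\ref{lem:branches}) would force the root of $M_5=[\bullet,\bullet,\bullet,\bullet]$ to have exactly two children instead of four. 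This caps $|H|$ at $17$, reduces everything to a table look-up, and even yields the slightly sharper conclusion that the containing branch of order $10$, $11$, $15$ or $16$ is precisely $H$ itself; you also need the table only up to order $17$ rather than $50$, and no induction. The one loose end is your lower bound $|H|\geq 7$: you invoke Lemma~\ref{Lem:g8}(i) for $w$ as a ``non-root'' vertex, but $w$ could well be the root of $T$. This is harmless --- simply start your finite check at $|H|=6$ (you always have $|H|\geq 6$), and Table~\ref{Tab:Min_Small} shows that neither minimal tree of order $6$ has a root branch of order $5$, so the conclusion $|H|\in\{10,11,15,16\}$ is unaffected.
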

\begin{proof}
The proof is similar to that of part (i) of Lemma~\ref{Lem:Forb_Br}, by induction on the order $n$. A direct check of Table~\ref{Tab:Min_Small} shows that the property holds if $|T|\leq 50$. If $|T|\geq 51$, then by Lemma~\ref{Lem:Form0} and Lemma~\ref{Lem:Form2}, the tree can be expressed as $T=[[A_1,A_2],B]$ for some minimal rooted trees $A_1,A_2$ and $B$, where $|A_1| \leq |A_2| \leq |B| \leq |[A_1,A_2]|$. Since $|A_1| + |A_2| + |B| + 2 = |T| \geq 51$, we must have $|B|\geq 17$. Any $5$-vertex branch of $T$ has to be a proper subtree of $B$ or $[A_1,A_2]$, and by the induction hypothesis it has to be part of a branch of order $10$, $11$, $15$ or $16$. 
\end{proof}
\begin{lem}
\label{Lem:5_5}
A minimal tree cannot have more than two branches of order $5$.
\end{lem}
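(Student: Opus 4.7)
My plan is to argue by contradiction: assume a minimal tree $T$ has three $5$-branches $F_1,F_2,F_3$. By Lemma~\ref{Lem:Place_of_5}, each $F_i$ is contained in a branch of $T$ of order in $\{10,11,15,16\}$, and I would associate to $F_i$ the \emph{maximal} such containing branch, call it $H_i$. Because the only proper containment between minimal trees of these four orders is $M_{10}\subsetneq M_{16}$, each $H_i$ is uniquely determined, and any two $H_i,H_j$ are either equal or vertex-disjoint. Since $M_{10}$ and $M_{15}$ each contain just one $5$-branch while $M_{11}$ and $M_{16}$ each contain exactly two, the three $H_i$'s cannot all coincide, so I would split into two cases: exactly two of them agree, or all three are pairwise distinct (hence pairwise disjoint).

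In the first case the coinciding branch $H$ must be $M_{11}$ or $M_{16}$, and $H_3$ is disjoint from it. If $H=M_{11}$, each possible order for $H_3$ yields one of the forbidden pairs $(11,10),(11,11),(15,11),(16,11)$ of Lemma~\ref{Lem:Forb_Br_1}, ruling $H_3$ out. If $H=M_{16}$, the pairs $(16,10)$ and $(16,11)$ are excluded by the same lemma, leaving $H_3\in\{M_{15},M_{16}\}$; for each of these I would invoke part~(i) of Lemma~\ref{Lem:GenComp} with the child-decompositions $M_{16}=[M_{10},M_5]$ and $M_{15}=[M_9,M_5]$. In both configurations neither of the two required inequalities of part~(i) of Lemma~\ref{Lem:GenComp} holds (they reduce to $10\le 5$ or $9\le 5$), producing a contradiction.

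For the second case I would carry out the same pair-by-pair elimination over every disjoint pair of branches drawn from $\{M_{10},M_{11},M_{15},M_{16}\}$. Lemma~\ref{Lem:Forb_Br_1} kills the four pairs involving $M_{11}$ together with $(M_{16},M_{10})$, while part~(i) of Lemma~\ref{Lem:GenComp} (with the same child-decompositions) kills $(M_{10},M_{10})$, $(M_{15},M_{15})$, $(M_{16},M_{16})$, and $(M_{15},M_{16})$. The only surviving disjoint pair is $(M_{10},M_{15})$, and three pairwise disjoint branches chosen from $\{M_{10},M_{15}\}$ must include two branches of the same order, reintroducing a forbidden pair. This exhausts every configuration. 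The main obstacle is assembling this table of admissible disjoint pairs---recording the child-decomposition of each of $M_{10},M_{11},M_{15},M_{16}$, checking the max-min condition of part~(i) of Lemma~\ref{Lem:GenComp} for each pair, and cross-referencing with the list in Lemma~\ref{Lem:Forb_Br_1}---after which the contradiction is immediate.
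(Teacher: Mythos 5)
Your proposal is correct and follows essentially the same route as the paper: reduce via Lemma~\ref{Lem:Place_of_5} to pairs of disjoint branches with orders in $\{10,11,15,16\}$, eliminate every pair except $(15,10)$ using Lemma~\ref{Lem:Forb_Br_1} and part~(i) of Lemma~\ref{Lem:GenComp}, and observe that the surviving pair yields only two $5$-branches. Your explicit treatment of maximal containing branches and the coinciding-branch case just spells out details the paper leaves implicit.
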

\begin{proof}
We know from Lemma~\ref{Lem:Place_of_5} that every $5$-vertex branch of a minimal tree $T$ has to be inside some branch of order $10$, $11$, $15$ or $16$. If there are more than two branches of order $5$, then there have to be at least two branches whose orders are in the set $\{10,11,15,16\}$. Since the minimal trees of order $10$, $15$ and $16$ have root branches of unequal orders, they cannot be repeated in a minimal tree by Lemma~\ref{Lem:GenComp}. By the same lemma, branches of order $15$ and $16$ cannot occur simultaneously either. By Lemma~\ref{Lem:Forb_Br_1}, the pairs $(11,11)$, $(11,10)$, $(15,11)$, $(16,10)$ and $(16,11)$ are also all excluded. The only possible remaining pair of branches with orders in $\{10,11,15,16\}$ is $(15,10)$, which however only gives us two $5$-vertex branches.
\end{proof}

\section{The main result}\label{Sec:Main}

In this section, we finally reach a full characterisation of minimal trees. To this end, we first need two more technical lemmas. Recall that the height $\h(T)$ of a rooted tree $T$ is the greatest distance from the root to a leaf. The $i$-th level of vertices (where $0 \leq i \leq \h(T)$) consists of all vertices whose distance from the root is exactly $i$.

\begin{lem}
\label{Lem:Compl}
Let $T$  be a minimal tree with $n\geq 7$ vertices in standard form, and let $A$ and $B$ be its root branches, so that $T = [A,B]$. Then the following statements hold:
\begin{itemize}
\item[(i)] $|\h(A)-\h(B)|\leq 1$.
\item[(ii)] The distances of any two leaves to the root of $T$ differ by at most $1$. In other words, the leaves of $T$ can only be at the last two levels. 
\end{itemize}
\end{lem}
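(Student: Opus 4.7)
The plan is to prove the two parts in order, using part (i) as the essential height constraint for part (ii).

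For part (i), I would argue by contradiction. Assume without loss of generality that $|A|\leq|B|$ (so $\h(A)\leq\h(B)$ by Lemma~\ref{Lem:Branch_height}), and suppose $\h(B)\geq\h(A)+2$. For small branches $|B|\leq 7$, the claim is immediate from Table~\ref{Tab:Min_Small}. Otherwise, Lemma~\ref{Lem:g8}(ii) applied to the minimal branch $B$ gives $B=[B_1,B_2]$. The decomposition $T=[[B_1,B_2],A]$ is then governed by Lemma~\ref{Lem:Form2}(i), which yields $\max\{|B_1|,|B_2|\}\leq|A|$. Another application of Lemma~\ref{Lem:Branch_height} (comparing each $B_i$ against $A$) gives $\max\{\h(B_1),\h(B_2)\}\leq\h(A)$, so $\h(B)=1+\max\{\h(B_1),\h(B_2)\}\leq\h(A)+1$, contradicting our assumption.

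For part (ii), I would proceed by strong induction on $n$, with base cases $n\leq 7$ verified against Table~\ref{Tab:Min_Small}. For the inductive step, write $T=[A,B]$ with $|A|\leq|B|$; part (i) gives $\h(B)-\h(A)\in\{0,1\}$. The equal-height case $\h(A)=\h(B)=h$ is routine: the inductive hypothesis places leaves of $A$ and $B$ at depths $h-1$ or $h$ within their respective root branches, so in $T$ they lie at depths $h$ or $h+1$, i.e.\ $\h(T)-1$ or $\h(T)$.

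The delicate case is $\h(A)=\h(B)-1$. Here a leaf of $A$ at depth $\h(A)-1$ within $A$ would land at depth $\h(T)-2$ in $T$, spoiling (ii). To exclude this, I would strengthen the inductive hypothesis to assert in parallel that whenever $T'=[X,Y]$ is a minimal tree in standard form with $\h(X)<\h(Y)$, the shorter branch $X$ is \emph{perfect}, meaning all its leaves lie at depth $\h(X)$ in $X$. Granted the strengthened statement, the leaves of $A$ in the case $\h(A)<\h(B)$ sit at depth $\h(A)+1=\h(T)-1$ in $T$, completing the proof of (ii).

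The main obstacle is establishing the perfectness of $A$ in the inductive step. Supposing $A$ is not perfect, the weaker form of (ii) for $A$ gives a leaf $\ell$ at depth exactly $\h(A)-1$ within $A$, whose parent $p$ is (by Lemma~\ref{Lem:Br_3_5}(i)) the root of a star of order $3$, $4$ or $5$. One then pairs $p$ with the parent $q$ of a deepest leaf of $B$, which is likewise the root of a star, noting that $\dd(p,\rr(T))$ and $\dd(q,\rr(T))$ differ by exactly $2$. An application of Lemma~\ref{Lem:GenComp}(ii) --- possibly after stepping up one level to the grandparent of $\ell$, whose branch is forced to be a minimal tree of height $2$ in standard form, hence one of $M_7,M_8,M_9,M_{10},M_{11}$, each having all leaves at its deepest level --- yields the desired contradiction. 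The main technical difficulty lies in the case analysis at $p$, $q$, and their grandparents so that the configurations invoked in Lemma~\ref{Lem:GenComp}(ii) deliver strict size inequalities rather than trivialities.
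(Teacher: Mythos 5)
Part (i) of your proposal is correct and is essentially the paper's argument in a slightly reorganised form: the paper also gets $\h(A)\le\h(B)+1$ from Lemma~\ref{Lem:Form2}(i) combined with Lemma~\ref{Lem:Branch_height}, and obtains the other direction from Lemma~\ref{Lem:GenComp} plus Lemma~\ref{Lem:Branch_height}, whereas you get it directly from Lemma~\ref{Lem:Branch_height} applied to $A$ and $B$. Your reduction of part (ii) to the statement ``in the case $\h(A)=\h(B)-1$ the shorter branch $A$ has no leaf at depth $\h(A)-1$'' is also the right reduction.

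The gap is in the one step that carries the whole weight of part (ii): deriving a contradiction from a leaf $\ell$ of $T$ at level $\h(T)-2$. The pairing you propose --- the parent $p$ of $\ell$ (a star root at level $\h(T)-3$) against the parent $q$ of a deepest leaf (a star root at level $\h(T)-1$) --- does not produce a contradiction. Lemma~\ref{Lem:GenComp}(ii) must be applied to the \emph{parents} of the two star roots, and since $q$'s parent is the deeper one, the lemma only tells you that the branches hanging from $q$'s parent are at most as large as those hanging from $p$'s parent; as all four of these can perfectly well be stars of orders between $3$ and $5$, nothing is violated. Your fallback, that the branch rooted at the grandparent of $\ell$ is ``forced to be a minimal tree of height $2$,'' is also unjustified: the induction hypothesis only places $\ell$ (at depth $2$ in that branch) on one of its last \emph{two} levels, so that branch could have height $3$ (for instance be $M_{12}=[M_7,M_4]$ with $\ell$ a leaf of the $M_4$ part). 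The comparison that actually works, and that the paper makes, is between branches at \emph{different} levels whose orders conflict: take any vertex $u$ at level $\h(T)-2$ having a descendant at level $\h(T)$ (e.g.\ the parent of $q$); its branch has height $2$, hence order at least $7$, while the branch at $p$ (one level higher) has order at most $5$. Applying Lemma~\ref{Lem:GenComp}(ii) to the parents of $u$ and of $p$ --- the former being strictly deeper --- forces the order of the branch at $u$ to be at most that of the branch at $p$, i.e.\ $7\le 5$, the desired contradiction. Even then one must check the hypotheses of Lemma~\ref{Lem:GenComp}(ii): that the two parent vertices root disjoint branches (this uses the induction hypothesis to show $u$ cannot lie in the branch rooted at $p$'s parent, since that branch has no leaf at level $\h(T)$), that both have exactly two children (order at least $8$, or order $7$ in standard form), and the degenerate case where $p$'s parent is the root (excluded because it would force $|T|\le 17$ via~Lemma~\ref{Lem:Form2}). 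These verifications are exactly the ``technical difficulty'' you defer, and without them the proof of part (ii) is incomplete.
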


\begin{proof}\
\begin{itemize}
\item[(i)] For $n \leq 17$, the claim is easily verified directly by means of Table~\ref{Tab:Min_Small}. If $n \geq 18$, then
both the root and the root's children have exactly two children by Lemma~\ref{Lem:Form0}. So $T$ is of the form $T = [[A_1,A_2],[B_1,B_2]]$, as in Figure~\ref{fig:A1A2B1B2}. Moreover, we may assume without loss of generality that
$$
\min\{|A_1|, |A_2|\}\geq \max\{|B_1|, |B_2|\}
$$
by Lemma~\ref{Lem:GenComp}, and consequently that
$$
\min\{\h(A_1), \h(A_2)\}\geq \max\{\h(B_1), \h(B_2)\}
$$
by Lemma~\ref{Lem:Branch_height}, which implies
\begin{equation}\label{eq:height1}
\h(A)  = \max\{\h(A_1), \h(A_2)\} + 1 \geq \max\{\h(B_1), \h(B_2)\} + 1 = \h(B).
\end{equation}
In addition, we know from part (i) of Lemma~\ref{Lem:Form2} that
\begin{equation}\label{eq:B-size}
\max\{|A_1|, |A_2|\}\leq |B|,
\end{equation}
and thus
\begin{equation}\label{eq:height2}
\h(A) = \max\{\h(A_1), \h(A_2)\} + 1 \leq \h(B) + 1,
\end{equation}
again by Lemma~\ref{Lem:Branch_height}. Part (i) follows from inequalities~\eqref{eq:height1} and~\eqref{eq:height2}.
\item[(ii)] The second part is proven by induction. Again, it is easy to check the statement directly for $n \leq 17$ (in fact, it holds for all $n$, not just $n \geq 7$). Assume that it holds whenever $n\leq k$ for some $k\geq 17$, and let the order of $T$ be $n=k+1$. Again, we know that $T$ must be of the form $T = [A,B] = [[A_1,A_2],[B_1,B_2]]$, and we can assume that $\max\{|A_1|,|A_2|\} \leq |B| \leq |A|$.
Lemma \ref{Lem:Branch_height}, the induction hypothesis and part (i) imply that the leaves of $T$ all lie at the last three levels. To complete the proof of (ii), it is only left to show that there is no leaf at the third level from the bottom.

Assume that there is such a leaf $f$, i.e., a leaf whose distance from the root $\rr(T)$ is $\h(T) - 2$. Note that it has to lie in branch $B$. Let $w$ be its parent, and let $w'$ be $w$'s parent. From Lemma~\ref{Lem:Br_3_5}, we know that the branch rooted at $w$ has order $3$, $4$ or $5$. If $w' = \rr(T)$, then this is the branch $B$, i.e., $3 \leq |B| \leq 5$. But then,~\eqref{eq:B-size} shows that
$$|T| = |A_1| + |A_2| + |B| + 2 \leq 3|B| + 2 \leq 17,$$
which is a contradiction. So we may assume that $w'$ is not the root. By the induction hypothesis, the branch rooted at $w'$ has no leaf at level $\h(T)$. 

Now pick any vertex $v$ at the same level as $f$ that has a descendant at level $\h(T)$, and let $v'$ be its parent (which is at the same level as $w$). Note that $v'$ cannot be in the branch rooted at $w'$. The branches rooted at $v'$ and $w'$ both have height $2$ or greater, thus order at least $7$ (since minimal trees of order at most $5$ are stars and branches of order $6$ do not occur by Lemma~\ref{Lem:Forb_Br}). If the branch rooted at $v'$ (resp. $w'$) has order $8$ or greater, then $v'$ (resp. $w'$) has exactly two children by 
Lemma~\ref{Lem:g8}. This is also true if the order of the branch is $7$, as we are assuming $T$ to be in standard form. Thus we are in a situation where part (ii) of Lemma~\ref{Lem:GenComp} applies (with $v'$ and $w'$ taking the roles of $v$ and $w$ in that lemma). So the order of the branch rooted at $v$ is less than or equal to the order of the branch rooted at $w$. But this is impossible, since the former has height $2$ and thus order at least $7$, while the order of the latter is at most $5$. This completes the proof of part (ii).
\end{itemize}

\end{proof}

Our final preparatory lemma shows that the residue class of the order modulo $5$ plays an essential role for the structure of a minimal tree.

\begin{lem}
\label{Lem:main}
If $T$ is a minimal tree with $n\geq 8$ vertices in standard form, then the following holds:
\begin{itemize}
\item[(i)]if $n\equiv 0 \mod 5$, then $T$ has exactly one branch of order $5$, and every leaf of $T$ either belongs to this branch or a branch of order $4$.
\item[(ii)]if $n\equiv 1 \mod 5$, then $T$ has exactly two branches of order $5$, and every leaf of $T$ either belongs to one of these branches or a branch of order $4$.
\item[(iii)]if $n\equiv 2 \mod 5$, then $T$ has a $7$-vertex branch, and every leaf of $T$ either belongs to this branch or a branch of order $4$.
\item[(iv)]if $n\equiv 3 \mod 5$, then $T$ has exactly one branch of order $3$, and every leaf of $T$ either belongs to this branch or a branch of order $4$.
\item[(v)]if $n\equiv 4 \mod 5$, then every leaf of $T$ belongs to a branch of order $4$.
\end{itemize}
\end{lem}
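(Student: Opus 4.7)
The plan is to derive a single counting identity that, combined with the structural restrictions established earlier, forces the values of $a$ and $c$ (the numbers of branches of orders $3$ and $5$ in $T$) to be determined by $n$ modulo $5$.

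Let $a$, $b$, $c$ denote the numbers of branches of $T$ of orders $3$, $4$, $5$ respectively. By Lemma~\ref{lem:branches} and Table~\ref{Tab:Min_Small}, each such branch is a star, so its root is an internal vertex of $T$ whose $2$, $3$, or $4$ children are all leaves of $T$. Since Lemma~\ref{Lem:Br_3_5}(i) guarantees every leaf arises this way, the leaf count is $\ell = 2a + 3b + 4c$. I would next classify every internal vertex of $T$ by its number of children: Lemma~\ref{Lem:g8} gives root degree $2$ and forces non-root non-leaves to have at least $2$ children; Lemma~\ref{Lem:Forb_Br}(i) excludes branches of order $6$; the standard-form hypothesis makes every $7$-vertex branch equal to $[[\bullet,\bullet],[\bullet,\bullet]]$; and Lemma~\ref{Lem:g8}(ii) applied via Lemma~\ref{lem:branches} handles branches of order $\geq 8$. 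Combined, these show that every internal vertex of $T$ has exactly $2$, $3$, or $4$ children, with the vertices having $3$ or $4$ children being precisely the roots of the $b$ branches of order $4$ and the $c$ branches of order $5$.

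Let $x$ denote the number of remaining internal vertices, namely those with exactly $2$ children whose branch has order $\geq 7$. Counting all vertices yields $n = \ell + (a+b+c) + x = 3a + 4b + 5c + x$, while counting edges via the number of children gives $n - 1 = 2(a+x) + 3b + 4c$. Subtracting these two relations produces $x = a + b + c - 1$, and substituting back yields the key identity
$$
n = 4a + 5b + 6c - 1,
$$
which reduces modulo $5$ to $n + 1 \equiv c - a \pmod{5}$.

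Finally, Lemma~\ref{Lem:Br_3_5}(ii) prevents $a$ and $c$ from both being positive, Lemma~\ref{Lem:Br_3_5}(iii) gives $a \leq 2$, and Lemma~\ref{Lem:5_5} gives $c \leq 2$. Thus there are only five admissible pairs $(a,c) \in \{(0,0),(1,0),(2,0),(0,1),(0,2)\}$, whose values $c - a = 0,-1,-2,1,2$ are pairwise distinct modulo $5$. Matching residue classes yields $(a,c) = (0,0),(1,0),(2,0),(0,1),(0,2)$ in cases (v), (iv), (iii), (i), (ii) respectively. That every leaf of $T$ lies in one of the branches of the specified orders is immediate from Lemma~\ref{Lem:Br_3_5}(i), while the presence of a $7$-vertex branch in case (iii) follows from Lemma~\ref{Lem:Br_3_5}(iii), which places the two branches of order $3$ inside a single branch of order $7$. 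The main obstacle is verifying the children-count classification, which draws on essentially all of the earlier structural lemmas; once that is in hand, the counting identity and residue analysis are straightforward.
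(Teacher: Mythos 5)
Your proposal is correct and follows essentially the same route as the paper: both arguments classify the internal vertices by their number of children using Lemmas~\ref{Lem:g8}, \ref{Lem:Forb_Br}, \ref{Lem:Br_3_5} and \ref{Lem:5_5} together with the standard-form hypothesis, and both exploit the fact that the tree obtained by deleting the leaves is binary (your identity $x=a+b+c-1$ is exactly the paper's ``$m-1$ internal vertices and $m$ leaves''). The only cosmetic difference is that you compress the paper's five-scenario enumeration into the single identity $n=4a+5b+6c-1$ and a residue check on the admissible pairs $(a,c)$, which is a clean and valid repackaging.
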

\begin{proof}
We know the following from Lemma~\ref{Lem:Br_3_5} and Lemma~\ref{Lem:5_5}:
\begin{itemize}
\item Every leaf belongs to a branch of order $3$, $4$ or $5$.
\item There are at most two branches of order $3$ (if exactly two, they belong to a branch of order $7$), and at most two branches of order $5$.
\item Branches of order $3$ and $5$ do not occur simultaneously.
\end{itemize}
Moreover, there are no branches of order $2$ (by Lemma~\ref{Lem:g8}) or $6$ (by Lemma~\ref{Lem:Forb_Br}), and all vertices that are roots of larger branches have exactly two children (if the order is $7$, then this holds because $T$ is in standard form, otherwise by Lemma~\ref{Lem:g8}). So if we remove all leaves, the resulting tree $T'$ is binary, i.e., every vertex is a leaf or has exactly two children. This means that it has $m-1$ internal vertices (non-leaves) and $m$ leaves for some positive integer $m$. Based on how many branches of order $3$ or $5$ occur, we have five scenarios:
\begin{itemize}
\item One branch of order $5$: one of the leaves of $T'$ has four children in $T$, all others three. This gives us a total of $n = (m-1)+m+4 + (m-1) \cdot 3 = 5m$ vertices.
\item Two branches of order $5$: two of the leaves of $T'$ have four children in $T$, all others three. This gives us a total of $n = (m-1)+m+2 \cdot 4 + (m-2) \cdot 3 = 5m+1$ vertices.
\item Two branches of order $3$ (thus one branch of order $7$): two of the leaves of $T'$ have two children in $T$, all others three. This gives us a total of $n = (m-1)+m+2 \cdot 2 + (m-2) \cdot 3 = 5m-3$ vertices.
\item One branch of order $3$: one of the leaves of $T'$ has two children in $T$, all others three. This gives us a total of $n = (m-1)+m+2 + (m-1) \cdot 3 = 5m-2$ vertices.
\item No branches of order $3$ or $5$: all leaves of $T'$ have three children in $T$. This gives us a total of $n = (m-1)+m + m \cdot 3 = 5m-1$ vertices.
\end{itemize}
We see that each of them corresponds to exactly one of the congruence classes modulo $5$, which proves the statement.
\end{proof}

Now we can finally give a complete description of minimal trees. Recall that the $i$-th level of vertices in a rooted tree consists of all vertices whose distance from the root is $i$. For a rooted tree embedded in the plane, we define the canonical order of vertices in the following way: we order first by level (i.e., all vertices at level $i$ come before all vertices at level $j$ if $i < j$), and from left to right within each level.

\begin{thm}\label{thm:main}
Let $n$ be a positive integer. If $n > 6$, then a minimal tree of order $n$ can be constructed as follows:
\begin{itemize}
\item Let $h$ be the unique integer for which $5 \cdot 2^{h-1} + 1 < n \leq 5 \cdot 2^h + 1$, and let $m$ be the nearest integer to $\frac{n+1}{5}$, so that $n \in \{5m-3,5m-2,5m-1,5m,5m+1\}$.
\item Start from a complete binary tree of height $h-1$, i.e., a tree in which all non-leaves have exactly two children, and all leaves are at level $h-1$. This tree has $2^h -1$ vertices, of which $2^{h-1}$ are leaves.
\item Replace each of the $m-2^{h-1}$ leftmost leaves by a $9$-vertex rooted tree $[[\bullet,\bullet,\bullet],[\bullet,\bullet,\bullet]]$ (i.e., attach two children, and three children to each of them). Note here that $m > 2^{h-1}$ by our choice of $h$ and $m$.
\item Replace each of the remaining $2^h-m$ leaves by a $4$-vertex rooted tree $[\bullet,\bullet,\bullet]$ (i.e., attach three children). The resulting tree has $5m-1$ vertices.
\item Consider five subcases according to the residue class of $n$ modulo $5$:
\begin{itemize}
\item If $n = 5m-1$, then we are done.
\item If $n = 5m-2$, remove the last leaf (according to the canonical order).
\item If $n = 5m-3$, remove one child each from the last two vertices in the canonical order that have leaves.
\item If $n = 5m$, add one child to the first vertex in the canonical order that has leaf children.
\item If $n = 5m+1$, add one child to each of the first two vertices in the canonical order that have leaf children.
\end{itemize}
\end{itemize}
If $n \not\equiv 2 \mod 5$, then this is the only minimal tree. Otherwise, there is a second minimal tree obtained from the tree described above by replacing the only branch of the form $[[\bullet,\bullet],[\bullet,\bullet]]$ by $[[\bullet,\bullet,\bullet],\bullet,\bullet]$, and there are no other minimal trees.
\end{thm}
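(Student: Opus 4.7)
The proof goes by strong induction on $n$, with base cases $n \leq 60$ verified against Table~\ref{Tab:Min_Small}. For $n \geq 61$, let $T$ be a minimal tree. By Lemma~\ref{Lem:Form0}, $T = [A,B]$ with both root branches having root degree~$2$; taking $|A| \geq |B|$ and using Lemma~\ref{Lem:Form2}(i), we get $|T| \leq 3|B| + 2$, hence $|B| \geq 20$. Thus both $A$ and $B$ are minimal (Lemma~\ref{lem:branches}) and of order at least $20$, so the induction hypothesis describes them fully.

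The plan is to pin down $T$ in three stages. First, I would determine the height parameter $h$ and leaf-cluster count $m$. Lemma~\ref{Lem:g8} and Lemma~\ref{Lem:Form0} force every vertex whose branch has order at least $8$ to have exactly two children; combined with Lemma~\ref{Lem:main}, which enumerates the small bottom branches, and Lemma~\ref{Lem:Compl}(ii), which places every leaf at depth $\h(T)$ or $\h(T)-1$, this forces the internal structure of $T$ to be an almost-complete binary skeleton of the height prescribed by $5 \cdot 2^{h-1} + 1 < n \leq 5 \cdot 2^h + 1$, once the per-residue vertex count from Lemma~\ref{Lem:main} is taken into account.

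Second, I would show the $9$-vertex branches occupy the leftmost $m-2^{h-1}$ positions in the canonical order. The induction hypothesis already gives this property inside $A$ and inside $B$ individually, so only the relative placement between the two halves matters: every $9$-branch should lie in $A$ until $A$ is full. This follows from Lemma~\ref{Lem:GenComp}(ii) applied along the path joining a hypothetically misplaced $9$-branch in $B$ to a $4$-branch further left in $A$; swapping the two either strictly decreases $\I(T)$ or contradicts an earlier-established property of the minimal tree. The residue-class adjustments are placed by the same type of rearrangement argument: additions of leaves go to the leftmost leaf-cluster parents, deletions to the rightmost leaves.

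Third, uniqueness. When $n \not\equiv 2 \pmod 5$, the construction above leaves no freedom. When $n \equiv 2 \pmod 5$, Lemma~\ref{Lem:main}(iii) guarantees exactly one $7$-vertex branch, and Table~\ref{Tab:Min_Small} exhibits its two minimal shapes $[[\bullet,\bullet],[\bullet,\bullet]]$ and $[[\bullet,\bullet,\bullet],\bullet,\bullet]$; plugging either one into the rest of the forced skeleton gives exactly the two trees $M_n^1$ and $M_n^2$. The principal obstacle is the leftmost-placement argument in the second stage: Lemma~\ref{Lem:GenComp}(ii) only compares two subtrees below a common root, so one must iterate the swap argument along entire root-to-leaf paths and carefully verify at each step that the inductive descriptions of $A$ and $B$ rule out the alternative arrangements.
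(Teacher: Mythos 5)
Your overall strategy---pin down the binary skeleton via Lemmas~\ref{Lem:g8}, \ref{Lem:Form0}, \ref{Lem:Compl} and \ref{Lem:main}, then fix the placement of the small bottom branches---matches the paper's, but there is a genuine gap exactly where you locate the ``principal obstacle'', and your proposed fix does not close it. The comparison you actually need is between two leaf-cluster parents at the \emph{same} level, one in $A$ and one in $B$ (e.g.\ a branch of order at most $5$ sitting to the left, in the canonical order, of a branch of order $9$ at level $h-1$). Lemma~\ref{Lem:GenComp}(ii) is inapplicable here: it requires $\dd(v,\rr(T))>\dd(w,\rr(T))$, i.e.\ the two vertices being compared must sit at different depths. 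For equal depths only part (i) is available, and it yields a disjunction (one pair of child-branches dominates the other) without telling you which way around; iterating part (ii) ``along root-to-leaf paths'' cannot manufacture the missing direction. The paper resolves this with a separate induction on the level: it first proves the global claim that for any two non-leaves $v,w$ with $w$ earlier in the canonical order, the branch rooted at $w$ is at least as large as the branch rooted at $v$, handling the same-level case by applying the level-induction hypothesis to the parents $v',w'$ (whose branches are then known to be ordered) and only then invoking Lemma~\ref{Lem:GenComp}(i) to transfer that ordering down to $v$ and $w$. Some statement of this kind is indispensable and is absent from your argument.

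A second, related omission: in your induction-on-$n$ framing, knowing that $A$ and $B$ are minimal trees of their respective orders does not yet determine the pair $(|A|,|B|)$, i.e.\ which split of $n-1$ is optimal (why is $M_{40}=[M_{20},M_{19}]$ rather than, say, $[M_{30},M_{9}]$?). In the paper this falls out of the monotonicity claim above together with the count of leaves of $T'$ at each level; in your setup it would have to be argued explicitly before the induction hypothesis can be applied to $A$ and $B$ in the form you use it. The base cases ($n\le 60$ against Table~\ref{Tab:Min_Small}) and your treatment of the two shapes of the order-$7$ branch in the case $n\equiv 2 \bmod 5$ are fine.
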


\begin{proof}
Let $T$ be a minimal tree with $n$ vertices in standard form. By now, we know that all leaves belong to branches of the form $[\bullet,\bullet]$, $[\bullet,\bullet,\bullet]$ or $[\bullet,\bullet,\bullet,\bullet]$, and that all non-leaves that are not root of such a branch have exactly two children. We draw $T$, starting from the root, in such a way that the branch rooted at the left child of any vertex with two children is at least as large as the branch rooted at the right child.

Let $v$ and $w$ be two arbitrary non-leaves such that $w$ comes before $v$ in the canonical order. We claim that the branch rooted at $w$ is at least as large as the branch rooted at $v$. Consider first the case that the two vertices do not lie at the same level. The claim is trivial if $w$ is an ancestor of $v$, in particular if it is the root. Otherwise, it follows from part (ii) of Lemma~\ref{Lem:GenComp}, applied to the parent vertices $v'$ and $w'$ of $v$ and $w$ respectively if $w'$ is not an ancestor of $v'$, and by part (i) of Lemma~\ref{Lem:Form2} applied to the branch rooted at $w'$ otherwise.

For vertices at the same level, we prove the claim by induction on the level. If $v$ and $w$ are both at level $1$, i.e., children of the root, then it follows directly from the way $T$ is drawn, so we can focus on the induction step. If $v$ and $w$ have the same parent, then the statement is again a consequence of the way the tree is drawn. Otherwise, let $v'$ and $w'$ be the parents of $v$ and $w$ respectively, and note that $w'$ comes before $v'$ in the canonical order, so the branch rooted at $w'$ is at least as large as the branch rooted at $v'$. Now the statement easily follows from part (i) of Lemma~\ref{Lem:GenComp}, applied to $v'$ and $w'$. 

Let $T'$ be obtained from $T$ by removing all leaves. We know that this is a binary tree, where all vertices are either leaves or have two children. It has $m-1$ internal vertices and  $m$ leaves, see Lemma~\ref{Lem:main}. In view of part (ii) of Lemma~\ref{Lem:Compl}, the leaves of $T'$ lie at the last two levels (since they are exactly the parents of leaves of $T$). So the $i$-th level contains $2^i$ vertices for every $i$, except possibly for the last level. This implies that the height of $T'$ is exactly $h$, and that the number of leaves at the last ($h$-th) level is precisely $2m-2^h$.

Next we show that the leaves of $T'$ at level $h-1$ are precisely the rightmost $2^h-m$ leaves, while the $m-2^{h-1}$ leftmost vertices at this level have two children each. If not, then there are two vertices $v$ and $w$ at level $h-1$ of $T'$ such that $w$ comes before $v$ in the canonical order, but $w$ is a leaf of $T'$ while $v$ is not. Then the branch rooted at $w$ has at most order $5$ in $T$ (all children of $w$ are leaves, and there cannot be more than four), while the branch rooted at $v$ has order at least $7$ in $T$ (the two children are not leaves of $T$ and thus have at least two more children each). But this contradicts the earlier observation that the branch rooted at $w$ cannot be smaller than that rooted at $v$.

Now $T'$ is uniquely characterised. If $n = 5m-1$, then this also already determines $T$: it is obtained from $T'$ by attaching three leaves to every leaf of $T'$, and the shape is as described in the statement of the theorem.

In the other cases, only the position of the branches of order $3$ or $5$ needs to be determined. But this position is again unique by the observation that the branch orders need to decrease according to the canonical order: thus branches of order $5$ have to come before branches of order $4$, and branches of order $3$ after those of order $4$. In each case, the result is exactly the tree that is described.

Finally, we need to consider minimal trees that are not in standard form. Such a tree has a branch of order $7$ (we already know that there can only be at most one: recall part (iii) of Lemma~\ref{Lem:Br_3_5} as well as Lemma~\ref{Lem:main}) whose shape is $[[\bullet,\bullet,\bullet],\bullet,\bullet]$. Replacing this branch by $[[\bullet,\bullet],[\bullet,\bullet]]$, we obtain a tree in standard form (which thus has the stated shape), and vice versa. This completes the proof.
\end{proof}

\section{Asymptotics}

Now that we have characterised the extremal trees, we can also obtain further information on the minimum values of $\I(T)$. The first twenty values are shown in Table~\ref{Tab:Min_Values}. In this section, we will show that there exists a constant $\alpha$ such that
$$m_n = \min_{|T| = n} \I(T) = \Theta(\alpha^n).$$
To this end, we first study the important special case where the number of vertices is of the form $n = 5 \cdot 2^{k-1} - 1$.  In view of Theorem~\ref{thm:main}, the unique minimal tree in this case has height $k$, all vertices at levels $0$ to $k-2$ have precisely two children, while all vertices at level $k-1$ have precisely three children. Thus its two branches are identical, with the number of vertices of the same form:
$$T_{5 \cdot 2^{k}-1} = [T_{5 \cdot 2^{k-1}-1},T_{5 \cdot 2^{k-1}-1}].$$
Define a sequence $x_k$ by $x_k = \J(T_{5 \cdot 2^{k-1}-1}) = \I(T_{5 \cdot 2^{k-1}-1}) + 2$. Note that $x_1 = \J(T_4) = 13$, and the recursion~\eqref{eq:Jrec} for $\J$ shows that
$$x_{k+1} = \J(T_{5 \cdot 2^{k}-1}) = \J(T_{5 \cdot 2^{k-1}-1})^2 - 1 = x_k^2-1.$$
There is a standard technique for treating recursions of this kind, see \cite{aho_1973_doubly}. We take the logarithm to obtain
$$\log x_{k+1} = 2 \log x_k + \log \big( 1 - x_k^{-2} \big),$$
thus by iteration
$$\log x_k = 2^{k-1} \log x_1 + \sum_{j=1}^{k-1} 2^{k-j-1} \log \big( 1 - x_j^{-2} \big).$$
Extending the sum to an infinite series yields
\begin{align}
\log x_k &= 2^{k-1} \log x_1 + \sum_{j=1}^{\infty} 2^{k-j-1} \log \big( 1 - x_j^{-2} \big) - \sum_{j=k}^{\infty} 2^{k-j-1} \log \big( 1 - x_j^{-2} \big) \nonumber \\
&= 2^k \bigg( \frac12 \log x_1 + \sum_{j=1}^{\infty} 2^{-j-1} \log \big( 1 - x_j^{-2} \big) \bigg) - \sum_{i=0}^{\infty} 2^{-i-1} \log \big( 1 - x_{k+i}^{-2} \big).\label{eq:sumrep}
\end{align}
The series converges since it is dominated by a geometric series. The expression inside the large bracket is a constant (note that it does not depend on $k$), which we denote by $\beta$:
\begin{equation}\label{eq:beta_defi}
\beta = \frac12 \log x_1 + \sum_{j=1}^{\infty} 2^{-j-1} \log \big( 1 - x_j^{-2} \big).
\end{equation}
The final sum in~\eqref{eq:sumrep} is positive, so $\log x_k \geq 2^k \beta$. For our purposes, we need somewhat stronger estimates, though: set $X_k = \exp(2^k \beta)$, and notice that
\begin{align*}
\log x_k &= \log X_k - \sum_{i=0}^{\infty} 2^{-i-1} \log \big( 1 - x_{k+i}^{-2} \big) \\
&\leq \log X_k - \sum_{i=0}^{\infty} 2^{-i-1} \log \big( 1 - x_{k}^{-2} \big) \\
&= \log X_k - \log \big( 1 - x_{k}^{-2} \big),
\end{align*}
which gives us
$$x_k(1-x_k^{-2}) \leq X_k.$$
The function $f_1(x) = x(1-x^{-2})$ is increasing on the positive reals, so this implies that
\begin{equation}\label{eq:upper_bd}
x_k \leq f_1^{-1}(X_k) = \frac{X_k + \sqrt{4+X_k^2}}{2} \leq X_k + \frac{1}{X_k}
\end{equation}
(the last inequality is easily verified). On the other hand,~\eqref{eq:sumrep} yields
\begin{align*}
\log x_k &\geq \log X_k - \frac12 \log \big( 1 - x_{k}^{-2} \big) - \frac14 \log \big( 1 - x_{k+1}^{-2} \big) \\
&= \log X_k - \frac12 \log \big( 1 - x_{k}^{-2} \big) - \frac14 \log \big( 1 - (x_k^2-1)^{-2} \big),
\end{align*}
so
$$x_k(1-x_k^{-2})^{1/2} \big(1-(x_k^2-1)^{-2}\big)^{1/4} = \big( (x_k^2-1)^2 - 1 \big)^{1/4} \geq X_k.$$
The function $f_2(x) = \big( (x^2-1)^2 - 1 \big)^{1/4}$ is also increasing, so
\begin{equation}\label{eq:lower_bd}
x_k \geq f_2^{-1}(X_k) = \sqrt{1+\sqrt{1+X_k^4}} \geq X_k + \frac{1}{2X_k}
\end{equation}
(again, the last inequality is straightforward). Now we are ready for the main result of this section.

\begin{thm}\label{thm:asy}
Let $\alpha \approx 1.66928\,37234\,96921\,49740\,26178$ be the constant given by $\alpha = e^{2\beta/5}$, where $\beta$ is defined as in~\eqref{eq:beta_defi}. The minimum number $m_n$ of \icss{} in a rooted tree with $n$ vertices satisfies the inequalities
$$m_n \geq \lceil \alpha^{n+1} \rceil - 2$$
for all $n \geq 1$ (with equality for infinitely many values of $n$) and, for all $n \geq 8$,
$$m_n \leq \lfloor 1.05 \cdot  \alpha^{n+1} \rfloor - 4.$$
\end{thm}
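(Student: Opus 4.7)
My plan is to prove $m_n\ge\lceil\alpha^{n+1}\rceil-2$ by induction on $n$, with equality forced at the special orders $n=5\cdot 2^{k-1}-1$. At these orders, the two-sided estimates~\eqref{eq:upper_bd} and~\eqref{eq:lower_bd} established above give $X_k<x_k<X_k+1$ (using $X_k>1$ for $k\ge 1$). Since $x_k$ is an integer lying strictly between $X_k=\alpha^{n+1}$ and $X_k+1$, we deduce $x_k=\lceil\alpha^{n+1}\rceil$, whence $m_n=x_k-2=\lceil\alpha^{n+1}\rceil-2$ exactly. This also supplies the infinitely many $n$ at which equality holds.

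For general $n$, I verify $1\le n\le 7$ directly from Table~\ref{Tab:Min_Values}. For $n\ge 8$, Lemma~\ref{Lem:g8}(ii) combined with Lemma~\ref{lem:branches} forces the minimal tree $T_n$ to split as $T_n=[A,B]$ with $A,B$ themselves minimal of order below $n$. By~\eqref{eq:Jrec} and the inductive hypothesis,
$$\J(T_n)=\J(A)\J(B)-1\ge\alpha^{|A|+1}\alpha^{|B|+1}-1=\alpha^{n+2}-1>\alpha^{n+1},$$
the final strict inequality following from $(\alpha-1)\alpha^{n+1}>1$ for $n\ge 1$. Since $\J(T_n)$ is an integer, this strict inequality yields $\J(T_n)\ge\lceil\alpha^{n+1}\rceil$, i.e.\ $m_n\ge\lceil\alpha^{n+1}\rceil-2$, regardless of whether $\alpha^{n+1}$ is itself an integer.

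\textbf{Upper bound.} The plan is to establish $\J(T_n)\le 1.05\,\alpha^{n+1}-2$ for $n\ge 8$, which rearranges to $m_n\le\lfloor 1.05\,\alpha^{n+1}\rfloor-4$ by integrality. Setting $y_n=\J(T_n)/\alpha^{n+1}$, the recursion $y_n=y_{n_1}y_{n_2}-\alpha^{-(n+1)}$ coming from $T_n=[T_{n_1},T_{n_2}]$ governs $y_n$. A naive induction with hypothesis $y_m\le 1.05$ does not close, since $1.05^2>1.05$ and the corrective term $\alpha^{-(n+1)}$ is too small for large $n$ to absorb the slack. Instead, I plan to iterate the recursion down to the base case, exploiting the detailed structure from Theorem~\ref{thm:main}: any minimal tree $T_n$ of height $h$ is recursively built from branches whose heights strictly decrease, so after at most $h=O(\log n)$ unrollings one lands on subtrees of bounded order. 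At every intermediate level, the structure of Theorem~\ref{thm:main} (together with Lemma~\ref{Lem:Form2}) ensures that at least one branch at that level is a special-order tree $T_{5\cdot 2^{j-1}-1}$, for which the sharp bound $y_{5\cdot 2^{j-1}-1}\le 1+X_j^{-2}$ from~\eqref{eq:upper_bd} controls the multiplicative contribution; the subtracted terms $\alpha^{-(n+1)}$ accumulated along the way then cancel most of the remaining growth. The resulting bound is of the form $y_n\le 1+O(\alpha^{-(n+1)/2})$ for $n$ past some explicit threshold $N_0$, and this is comfortably below $1.05-2\alpha^{-(n+1)}$. The finite range $8\le n\le N_0$ is handled by direct computation of $\J(T_n)$ from Theorem~\ref{thm:main} and the $\J$-recursion~\eqref{eq:Jrec}.

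The main obstacle in the upper bound is that the constant $1.05$ is essentially tight: equality in $m_n\le\lfloor 1.05\,\alpha^{n+1}\rfloor-4$ holds at $n=8$, and the bound is nearly tight at $n=12$ (where $y_n\approx 1.047$), so the inductive step has almost no slack; one cannot afford any crude level-by-level estimate. The delicate cancellation between the multiplicative growth $y_{n_1}y_{n_2}$ and the subtractive correction $\alpha^{-(n+1)}$ must be tracked carefully at each level of the unrolling, and the structural input that makes it possible is precisely the appearance of a special-order branch at each level of the minimal decomposition from Theorem~\ref{thm:main}, where $y$ is already doubly-exponentially close to $1$. Once both bounds are combined, the theorem follows.
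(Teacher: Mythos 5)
Your lower-bound induction contains a fatal arithmetic slip. Since $T_n=[A,B]$ has $|A|+|B|=n-1$ (the root accounts for the extra vertex), you get $\alpha^{|A|+1}\alpha^{|B|+1}=\alpha^{(|A|+|B|)+2}=\alpha^{n+1}$, \emph{not} $\alpha^{n+2}$. The recursion therefore only yields $\J(T_n)\ge\alpha^{n+1}-1$, which is strictly weaker than the target $\J(T_n)\ge\alpha^{n+1}$: the naive induction hypothesis loses exactly the subtracted $1$ at every step and does not close. This is precisely the difficulty the paper's proof is built around: it strengthens the hypothesis to $m_n+2\ge\alpha^{n+1}+\tfrac{1}{2\alpha^{n+1}}$, so that the product of the two corrective terms produces $\tfrac12(\alpha^{n_1-n_2}+\alpha^{n_2-n_1})$, and then it uses the characterisation of minimal trees to argue that $n_1\ne n_2$ unless $n=5\cdot2^{k-1}-1$ (the latter case being handled directly by~\eqref{eq:lower_bd}); convexity of $x\mapsto\alpha^x+\alpha^{-x}$ then gives $\tfrac12(\alpha+\alpha^{-1})-1>\tfrac{1}{4\alpha^{n+1}}$, which absorbs the lost $1$. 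Your treatment of the special orders (deducing $x_k=\lceil X_k\rceil$ from the two-sided bounds) is fine and does establish equality for infinitely many $n$, but the general step of your induction is broken and needs the strengthened hypothesis.

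The upper bound is only a sketch, and its key structural premise is false: it is not true that every minimal tree decomposes at each level with at least one branch of special order $5\cdot2^{j-1}-1$ (i.e.\ order $4,9,19,39,\dots$); for instance $M_{16}=[M_{10},M_5]$, $M_{26}=[M_{15},M_{10}]$ and $M_{36}=[M_{20},M_{15}]$ have no special-order root branch. So the proposed unrolling has no guaranteed doubly-exponentially-accurate factor to anchor it at each level, and the claimed bound $y_n\le 1+O(\alpha^{-(n+1)/2})$ cannot hold anyway, since you yourself note $y_{12}\approx1.047$ and the limit superior of $y_n$ exceeds $1$. The paper sidesteps the structure of the true minimal tree entirely: it bounds $m_n\le\I([T_{n_1},T_{n_2}])$ for a \emph{constructed, not necessarily minimal} tree in which $n_1=5\cdot2^k-1$ is chosen to be a special order (with $15\cdot2^{k-1}-1\le n<15\cdot2^k-1$ and $n_2=n-n_1-1$), applies the sharp estimate~\eqref{eq:upper_bd} to the special branch and the induction hypothesis to the other, and closes the induction with a single explicit computation. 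You would need either to adopt that construction or to supply a correct replacement for the false structural claim.
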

\begin{proof}
We prove both statements by induction on $n$, starting with the lower bound. In fact, we prove the slightly stronger statement that
$$m_n +2 \geq \alpha^{n+1} + \frac{1}{2 \alpha^{n+1}}.$$
Note that if $n = 5 \cdot 2^k - 1$, this is exactly~\eqref{eq:lower_bd}. The  inequality is also easily verified for small $n$ ($n \leq 5$ suffices).  For all $n > 5$, there is a minimal tree $T$ whose root degree is $2$. Let $T_1$ and $T_2$ be its root branches, which are again minimal trees. Let their orders be $n_1$ and $n_2$ respectively ($n_1 + n_2 = n-1$). We have
$$m_n + 2 = \J(T) = \J(T_1)\J(T_2) - 1 = (m_{n_1}+2)(m_{n_2}+2) - 1.$$
By the induction hypothesis, this gives us
\begin{align*}
m_n + 2 &\geq \Big( \alpha^{n_1+1} + \frac{1}{2\alpha^{n_1+1}} \Big)  \cdot \Big( \alpha^{n_2+1} + \frac{1}{2\alpha^{n_2+1}} \Big) - 1 \\
&= \alpha^{n+1} + \frac12 \Big( \alpha^{n_1-n_2} + \alpha^{n_2-n_1} \Big) + \frac{1}{4 \alpha^{n+1}} - 1.
\end{align*}
If $n = 5 \cdot 2^{k-1} - 1$ for some $k$, then we already know that the desired inequality holds (as can be seen from~\eqref{eq:lower_bd}), so we can neglect this case. Otherwise, the characterisation of minimal trees in Theorem~\ref{thm:main} shows that $n_1 \neq n_2$ and thus $|n_1-n_2| \geq 1$. Since $g(x) = \frac12(\alpha^x + \alpha^{-x})$ is an even convex function, this means that
$$m_n+2 \geq \alpha^{n+1} + \frac12 (\alpha + \alpha^{-1}) + \frac{1}{4 \alpha^{n+1}} - 1,$$
and since $\frac12(\alpha + \alpha^{-1}) - 1 > \frac{1}{4\alpha^2} > \frac{1}{4 \alpha^{n+1}}$, the desired inequality follows.

\bigskip

Now we prove the upper bound by another induction. It is easily checked to be valid for $8 \leq n \leq 17$. Assume now that $n \geq 18$, let $k$ be the unique positive integer such that $15 \cdot 2^{k-1} - 1 \leq n < 15 \cdot 2^{k} - 1$, and set $n_1 = 5 \cdot 2^k-1$ and $n_2 = n-n_1-1$. Note that this choice implies $n_2 \leq 2n_1$ as well as $n_1 \geq 9$ and $n_2 \geq 8$. Let $T_1$ and $T_2$ be minimal trees with $n_1$ and $n_2$ vertices respectively, and let $T = [T_1,T_2]$ be the tree whose root branches are $T_1$ and $T_2$ respectively. We can apply~\eqref{eq:upper_bd} to $T_1$ because of the choice of $n_1$, giving us
$$\J(T_1) \leq \alpha^{n_1+1} + \frac{1}{\alpha^{n_1+1}}.$$
Moreover, applying the induction hypothesis to $n_2$, we get
$$\J(T_2) \leq 1.05 \cdot \alpha^{n_2+1} - 2.$$
Hence we have
\begin{align*}
m_n + 2 &\leq \J(T) = \J(T_1)\J(T_2) - 1 \\
&\leq \Big(\alpha^{n_1+1} + \frac{1}{\alpha^{n_1+1}} \Big) \Big(1.05 \cdot \alpha^{n_2+1} -2 \Big) - 1 \\
&= 1.05 \cdot \alpha^{n+1} + 1.05 \cdot \alpha^{n_2-n_1} - 2 \alpha^{n_1+1} - 2\alpha^{-n_1-1} - 1 \\
&\leq 1.05 \cdot \alpha^{n+1} + 1.05 \cdot \alpha^{n_1} - 2 \alpha^{n_1+1} - 1 \\
&= 1.05 \cdot \alpha^{n+1} -(2\alpha-1.05) \alpha^{n_1} - 1 \\
&\leq 1.05 \cdot \alpha^{n+1} -(2\alpha-1.05) \alpha^{9} - 1 \\
&\leq 1.05 \cdot \alpha^{n+1} -2.
\end{align*}
This completes the induction and thus also the proof of the desired upper bound.
\end{proof}

\begin{rem}
Since the series~\eqref{eq:beta_defi} converges rapidly, it is possible to determine the constant $\alpha$ with very high accuracy. However, there is probably no simple expression for $\alpha$. It follows from general results on polynomial recursions that $\alpha$ is irrational \cite{wagner_2021_irrationality} and in fact even transcendental \cite{dubickas_2021_transcendency}.
\end{rem}

\begin{rem}
The constant $1.05$ in the upper bound of Theorem~\ref{thm:asy} is not best possible, but it is not difficult to prove that
$$\limsup_{n \to \infty} \alpha^{-n-1} m_n > 1$$
by considering special sequences (for example $n = 5 \cdot 2^k$). Numerically, the limit superior appears to be $1.0468049642$ (see also Figure~\ref{fig:sequenceplot} for a plot of the sequence given by $\alpha^{-n-1}m_n$). Using the techniques described in~\cite{heuberger_2010_asymptotics}, it might be possible to provide even more precise information on the values of $m_n$.
\end{rem}
\begin{figure}[htbp]
\includegraphics{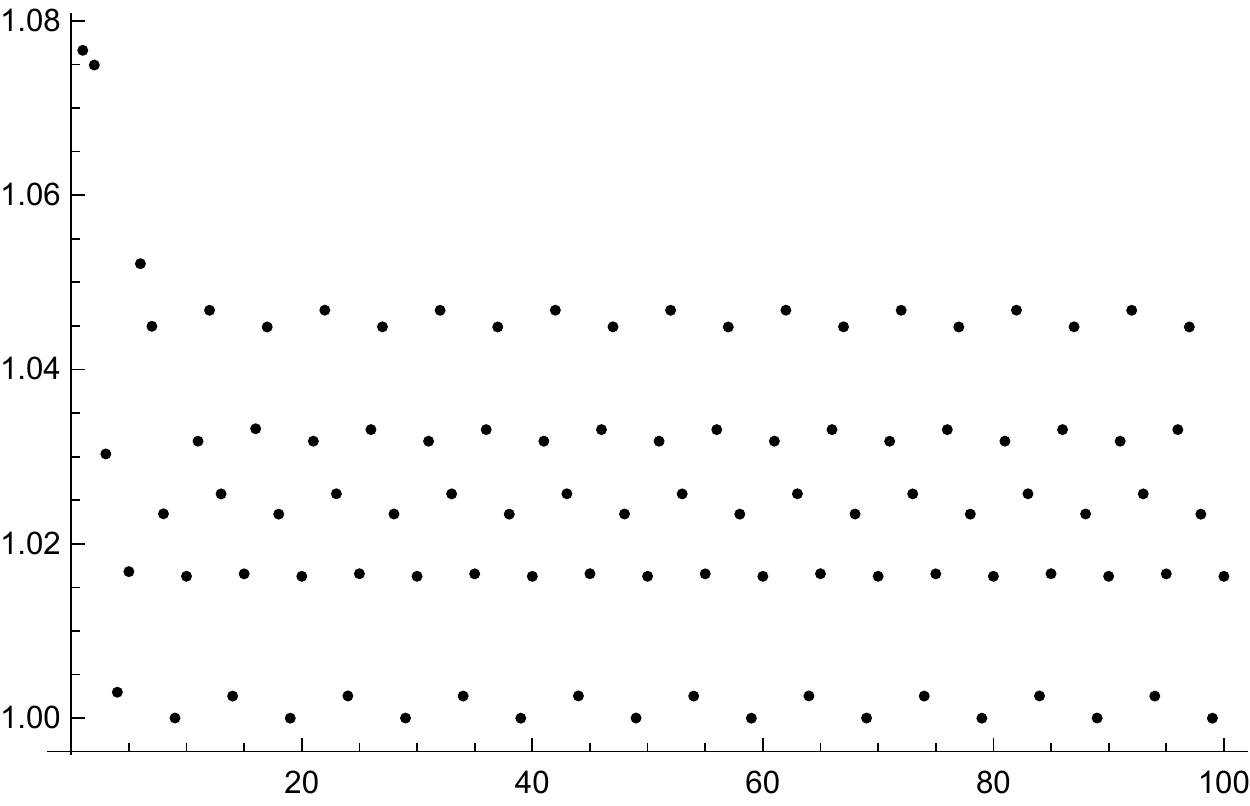}
\caption{The values of $\alpha^{-n-1}m_n$.}\label{fig:sequenceplot}
\end{figure}

\bibliographystyle{abbrv}
\bibliography{Infima_Closed_Sets}

\end{document}